 \documentclass[final,3p,times]{elsarticle}

\usepackage{graphicx}
\usepackage{amsmath,amssymb,amsfonts}%
\usepackage{mathrsfs}
\usepackage{amsthm}%
\usepackage{epstopdf} 
\usepackage{amssymb}
\usepackage[ruled,linesnumbered,commentsnumbered]{algorithm2e}
\usepackage{subcaption}
\usepackage{booktabs}
\usepackage{multirow}
\usepackage{bm}
\usepackage{float}
\usepackage{cases}
\usepackage{color}
\usepackage{paracol}

\newtheorem{theorem}{Theorem}[section]
\newtheorem{proposition}{Proposition}[section]%
\newtheorem{assumption}{Assumption}[section]
\newtheorem{lemma}{Lemma}[section]
\newtheorem{remark}{Remark}%

\newtheorem{definition}{Definition}%

\newcommand{\Rnp}{{\mathbb{R}^{n\times p}}}
\newcommand{\F}{{\mathcal{F}}}
\newcommand{\tp}{^{\top}}

\begin{document}

\begin{frontmatter}



\title{Improved Penalty Function Approaches for Optimization  Problems with General Orthogonality}
\author[label1]{Yongshen Zhang}
\author[label2]{Xin Liu}
\author[label3]{Nachuan Xiao}
\author[label1]{Chunming Tang}

\affiliation[label1]{organization={School of Mathematics $ \& $ Center for Applied
Mathematics of Guangxi},
            addressline={Guangxi University},
            city={Nanning},
            postcode={530004},
            country={P.R. China}}
\affiliation[label2]{organization={State Key Laboratory of Scientific and Engineering Computing, Academy of Mathematics and Systems Science},
            addressline={Chinese Academy of Sciences, and University of Chinese Academy of Sciences},
            country={P.R. China}}
\affiliation[label3]{organization={School of Data Science},
            addressline={The Chinese University of Hong Kong},
            city={Shenzhen, Guangdong},
            postcode={518172},
            country={P.R. China}}



\begin{abstract}
In this paper, we consider a class of generalized orthogonal optimization constraint problems (GOOCP) over $\Rnp$, where the variable $X$ is restricted within the intersection of a certain subspace $\F$ and satisfies the quadratic constraint $\{X \in \Rnp: X\tp \phi(X) = I_p\}$. Such constraints generalize a wide range of structured matrix manifolds, such as the Stiefel manifold, the symplectic Stiefel manifold, the indefinite Stiefel manifold, the third-order tensor Stiefel manifold, etc. We show that the feasible region of GOOCP is a closed embedded submanifold of $\Rnp$ and characterize the necessary geometric materials for the existing Riemannian optimization frameworks. Based on the constraint dissolving approach for Riemannian optimization problems, we propose the constraint dissolving penalty function (GOCDF) for the constrained optimization problem GOOCP with easy-to-compute formulations. We further establish the equivalence between GOCDF and GOOCP in the aspects of first-order and second-order stationary points. We also analyze the computational complexity of applying first-order methods to minimize GOOCP, which could be significantly lower than those of first-order Riemannian optimization methods. Numerical experiments demonstrate that solving GOOCP through applying unconstrained optimization methods to minimize constraint dissolving function demonstrates superior efficiency to existing Riemannian optimization methods.

\end{abstract}


\begin{keyword}
Riemannian optimization \sep generalized orthogonality constraints \sep constraint dissolving framework \sep penalty function \sep unconstrained optimization methods

\MSC[2020] 90C30 \sep 65K05
\end{keyword}

\end{frontmatter}




\section{Introduction}
\label{intro}
In this paper, we consider the following generalized orthogonal optimization constraint problem (GOOCP),
\begin{equation}\tag{GOOCP}\label{ocp}
  \begin{aligned}
  &\min_{X\in\mathcal{F}}&&f(X)\\
  &\mathrm{s.t.}&&X^\top\phi(X)=I_p,
  \end{aligned}
\end{equation}
where $ \mathcal{F} $ is a certain subspace of $ \mathbb{R}^{n \times p}$ with $p\leq n$, and $ \phi : \mathcal{F} \rightarrow \mathcal{F} $ is a linear mapping. Throughout this paper, we make the following assumptions on \ref{ocp},
\begin{assumption}\label{ass_1}
  \begin{enumerate}
    \item For any $ X \in \mathcal{F} $, and any $ T \in \mathcal{G} : = \operatorname{span}\{ X_1^{\top}X_2 : X_1, X_2 \in \mathcal{F} \} $, it holds that $ XT \in \mathcal{F} $.
    \item The linear mapping $ \phi $ is one-to-one and self-adjoint.
    \item There exists a linear transform $ \psi : \mathcal{G}\to\mathcal{G} $ such that $ \phi(XT)=\phi(X)\psi(T) $ and  $ \psi((\phi(X))^{\top}X) = X^{\top}\phi(X) $ {holds for any $X \in \F$ and any $T \in \mathcal{G}$}.
  \end{enumerate}
\end{assumption}

Optimization problems in the form of \ref{ocp} have broad applications in the areas of scientific computing \cite{gao2022orthogonalization,liu2015analysis}, statistics \cite{fischler1981random,zou2006sparse}, machine learning \cite{huang2018orthogonal,mackey2018orthogonal}, and signal processing \cite{tropp2005designing}. In the following, we present several illustrative examples of the generality of \ref{ocp}, which enables it to cover a wide range of manifold optimization problems of practical interest. These examples are summarized in Table \ref{tab:ex}. The reader is referred to the monographs \cite{absil2008optimization,boumal2023introduction} and the references therein for detailed real-world applications of these manifold optimization problems.



\begin{table}[H]
    \centering
    \resizebox{\textwidth}{!}{
    \begin{tabular}{l|l|l|l}
    \toprule
       Manifolds & Mathematical expression & $ \mathcal{F} $ & $\phi(X)$ \\
       \midrule
       Stiefel manifold  & ${\rm{St}}(p,n) :=\{ X \in \mathbb{R}^{n \times p} : X^{\top}X = I_p\}$ & $ \Rnp $ & $ X $\\
       \midrule
       Generalized Stiefel manifold (\cite{sato2019cholesky,shustin2023riemannian}) & ${\rm{St}}_M(p,n) :=\{ X \in \mathbb{R}^{n \times p} : X^{\top}BX = I_p,B\succ 0\}$& $ \mathbb{R}^{n \times p} $ & $ BX $ \\
       \midrule
       Symplectic Stiefel manifold (\cite{gao2021riemannian, gao2021geometry}) & ${\rm{Sp}} (2p,2n) :=\{ X \in \mathbb{R}^{2n \times 2p} : X^{\top}J_{2n}X = J_{2p}, J_{2n} = \left(\begin{matrix}
           0 & I_n \\
           -I_n & 0
       \end{matrix} \right) \}$ & $ \mathbb{R}^{2n \times 2p} $ & $ -J_{2n} X J_{2p} $ \\
       \midrule
       Indefinite Stiefel manifold (\cite{van2024riemannian}) & ${\rm iSt}({p,n}) := \{ X \in \mathbb{R}^{n \times p} : X^{\top}AX = J ,A^{\top}=A,J^2 = I_p\}$ & $ \Rnp $ & $ AXJ $ \\
       \midrule
       Hyperbolic manifold (\cite{bai2014minimization}) & $\mathcal{H}(p,n):=\{ X \in \mathbb{R}^{n \times p} : X^{\top}HX = I_p,\lambda(H) = \pm 1\}$ & $ \Rnp $ & $ HX $ \\
       \midrule
       Third-order tensor Stiefel manifold (\cite{mao2024computation}) & ${\rm St}(n,p,l) := \left\{ \mathscr{X} \in \mathbb{R}^{n \times p \times l} : \mathscr{X}^{\top}*\mathscr{X} = \mathscr{I}_p, n \geq p \right\}$ & $ \{ Y \in \mathbb{R}^{ln \times lp} : Y = \operatorname{Diag}(\mathscr{X}\times_3 M)\}  $ & $ X $ \\
       \midrule
    \end{tabular}}
    \caption{Summary of manifolds that satisfy the feasible region of \ref{ocp}}
    \label{tab:ex}
\end{table}

\begin{remark}
    The third-order tensor Stiefel manifold defined via the t-product has been previously introduced in \cite{mao2024computation}. In this paper, we consider a more general setting based on the tensor-tensor product ($l$-product) \cite{kernfeld2015tensor}. Specifically, the tensor-tensor product of $ \mathscr{X} \in \mathbb{R}^{n \times p \times l} $ and $ \mathscr{Y} \in \mathbb{R}^{p \times n \times l} $ defined through the invertible matrix $ M $ is
\begin{equation*}
\mathscr{X} * \mathscr{Y} = \left((\mathscr{X} \times_3 M) \bigtriangleup (\mathscr{Y} \times_3 M) \right)\times_3 M^{-1},
\end{equation*}
where $ (\mathscr{X} \times_3 M)_{i_1i_2{j}} = \sum_{i_3 = 1}^{l} x_{i_1i_2{i_3}}m_{{j}{i_3}} $ and $\bigtriangleup$ is the face-wise product.
Using the operator $ \operatorname{Diag}(\cdot)$ \footnote{For any third-order tensor $ \mathscr{X} \in \mathbb{R}^{n \times p \times l} $, let $ \mathscr{X}^{(i)} $ denote the ith frontal slice of $ \mathscr{X} $. The operator $\operatorname{Diag}(\cdot) $ is defined as $ \operatorname{Diag}(\mathscr{X}) = \left[\begin{matrix}
                    {\mathscr{X}}^{(1)} & 0 & 0 \\
                    0 & \ddots & 0 \\
                    0 & 0 & {\mathscr{X}}^{(l)}
                  \end{matrix} \right] $.}, the third-order tensor Stiefel manifold can be reformulated as
\begin{equation*}
  {\rm St}(n,p,l) = \left\{ \mathscr{X} \in \mathbb{R}^{n \times p \times l} : \operatorname{Diag}(\mathscr{X}\times_3 M)^{\top}\operatorname{Diag}(\mathscr{X}\times_3 M) = I_{lp} \right\}.
\end{equation*}
Let $ \mathcal{F} = \{ Y \in \mathbb{R}^{ln \times lp} : Y = \operatorname{Diag}(\mathscr{X}\times_3 M)\} $ and $ \phi(X) = X $, then the feasible region of \ref{ocp} is equivalent to the third-order tensor Stiefel manifold.
\end{remark}

\subsection{Related Works}
As demonstrated in Table \ref{tab:ex}, the feasible region of the applications of \ref{ocp} usually exhibits a specific manifold structure. As a result, substantial research has been devoted to solving these special cases of \ref{ocp} as an unconstrained optimization problem on a Riemannian manifold \cite{absil2008optimization}. Due to the nonlinear structure of the manifold, Riemannian optimization methods require the use of fundamental geometric tools (namely concepts from differential geometry), such as the geodesic, parallel transport, etc. The geodesic serves as the generalization of straight lines in Euclidean space to the Riemannian manifold. However, computing geodesics on a manifold $\mathcal{M}$ is generally computationally expensive, even when $\phi$ is given by relatively simple expressions (e.g., the Stiefel manifold). Although the notion of retraction was introduced in \cite{absil2008optimization} to approximate geodesic, retraction operations often remain significantly more costly than standard matrix-matrix multiplications in various scenarios, including the Stiefel manifold, the symplectic Stiefel manifold, and the third-order tensor Stiefel manifold. Additionally, parallel transport moves tangent vectors between different tangent spaces on a manifold. However, computing the parallel transport typically requires solving an ordinary differential equation, making it computationally expensive in practice. To mitigate this issue, the concept of vector transport was introduced in \cite{absil2008optimization} as an approximation to parallel transport. As mentioned in \cite{qi2010riemannian}, vector transport is generally much more computationally efficient. With retractions and vector transports, many unconstrained optimization approaches have been extended to their Riemannian versions, including Riemannian gradient descent with line-search \cite{abrudan2008steepest,absil2008optimization,gao2021riemannian,van2024riemannian}, Riemannian conjugate gradient methods \cite{sato2022riemannian,tang2023class,zhu2017riemannian}, Riemannian accelerated gradient methods \cite{han2023riemannian,liu2017accelerated,siegel2019accelerated,zhang2018towards}, see \cite{absil2008optimization,boumal2023introduction} for instances.  Table \ref{tab:man} summarizes existing work on the computational cost of these geometric materials. Furthermore, in existing Riemannian optimization approaches for solving \ref{ocp}, the geometric structures of $\mathcal{M}$ is characterized separately. To the best of our knowledge, there has been no work establishing an uniform characterization of the geometric structures for \ref{ocp}.

Furthermore, based on these geometric tools, converting an unconstrained optimization method into its Riemannian version for solving \ref{ocp} requires substantial modifications. As a result, keeping Riemannian optimization methods aligned with the rapid advances in unconstrained nonconvex optimization remains challenging. Moreover, available Riemannian optimization solvers are considerably fewer than their unconstrained counterparts.

\begin{table}[!htbp]
    \centering
    \caption{A summary of geometric tools for specific manifolds that satisfy the \ref{ocp} constraints. For all entries in the fourth column, the orthogonal projection is selected as the vector transport. ``E'' and ``I'' denote explicit and implicit forms, respectively. When the orthogonal projection is implicit, the computational complexity is determined by the Python solver \emph{scipy.linalg.solve\_continuous\_lyapunov}.}
    \label{tab:man}
    \resizebox{0.75\textwidth}{!}{
    \begin{tabular}{l|l|c|c|c}
    \toprule
       Manifolds & Retraction & Retraction cost & Vector transport & Vector transport cost \\
       \midrule
       ${\rm{St}}(p,n)$ & QR decomposition & $ \mathcal{O}(np^2) $ & E & $ \mathcal{O}(np^2) $\\
       \midrule
       ${\rm{St}}_M(p,n)$ & QR decomposition & $ \mathcal{O}(np^2) $ & E & $ \mathcal{O}(n^2p+np^2) $ \\
       \midrule
       ${\rm{Sp}} (2p,2n)$ & Cayley retraction & $  \mathcal{O}(n^3)  $ & I & $ \mathcal{O}(n^2p+p^3) $ \\
       \midrule
       ${\rm iSt}(p,n)$ & Cayley retraction & $  \mathcal{O}(n^3)  $ & I & $ \mathcal{O}(n^2p+p^3) $ \\
       \midrule
       ${\rm St}(n,p,l)$ & t-QR decomposition & $ \mathcal{O}(np^2l+npl^2) $ & E & $ \mathcal{O}(np^2l+npl^2) $ \\
       \bottomrule
    \end{tabular}}
\end{table}


Recently, \cite{xiao2024dissolving} introduced an exact penalty function for solving optimization problems with Riemannian constraints in Euclidean space, referred to as the constraint dissolving approach. This approach effectively bridges the gap between Euclidean unconstrained optimization methods and Riemannian constrained optimization problems, while preserving the desirable theoretical properties of existing Euclidean approaches. The central part of their method is the constraint dissolving operator, whose general formulation requires the Jacobian matrix of the constraint mapping $c$ and its pseudo-inverse. Therefore, computing the constraint dissolving mapping for \ref{ocp} based on \cite{xiao2024dissolving} can be computationally expensive for a wide range of manifolds. Subsequently, Jiang et al. \cite{jiang2026smooth} developed a smooth locally exact penalty method for optimization problems on generalized Stiefel manifolds. This method can be regarded as a further extension of the constraint dissolving framework to a specific class of manifolds. In their work, a computationally tractable constraint dissolving operator was constructed based on the closed-form expression of the Lagrange multiplier associated with the generalized Stiefel manifold. However, the \ref{ocp} does not admit such a closed-form expression of the Lagrange multiplier. As a result, the technique proposed in \cite{jiang2026smooth} cannot be directly extended to the present problem. The unified easy-to-compute formulation of the constraint dissolving for $\mathcal{M}$ remain unknown yet.


\subsection{Contributions}
In this paper, we present an uniform characterization of the geomeric materials of $\mathcal{M}$, including the tangent space, Riemannian gradient and Riemannian Hessian.
Moreover, we construct a computationally efficient constraint dissolving operator and subsequently propose a constraint dissolving function for $\mathcal{M}$ in \ref{ocp}. Then we propose a general and extensible algorithmic framework for solving \ref{ocp}, which eliminates the need for case-specific analysis of manifold structures and avoids reliance on computational complexity geometry tools. Extensive numerical experiments demonstrate significant advantages in formulating \ref{ocp} as a Riemannian optimization problem and implementing efficient unconstrained optimization methods through the constraint dissolving framework.

\subsection{Organization}

The rest of this paper is organized as follows. In Section \ref{sec:2}, the necessary preliminaries are introduced, including notation, terminology, and foundational concepts used throughout the paper.  It is also proved that the feasible region of \ref{ocp} constitutes a closed embedded submanifold in $ \mathcal{F} $, and characterizes uniformly the geometric materials of $ \mathcal{M} $. In Section \ref{sec:3}, a computationally efficient constraint dissolving operator $\mathcal{A}$ is proposed, based on which the constraint dissolving function associated with \ref{ocp} is constructed and the relationship between \ref{cdf} and \ref{ocp} is established. In Section \ref{sec:4}, the fundamental algorithmic framework is described, and the computational complexity and stability of infeasible points are analyzed. In Section \ref{sec:5}, numerical experiments are presented to demonstrate that the constraint reduction framework embedded with unconstrained optimization methods has better numerical performance compared to the Riemannian optimization methods.

\section{Preliminaries}\label{sec:2}
In this section, we first introduce the basic notation used throughout the paper, present the necessary definitions and assumptions, and then analyze the manifold structure of the feasible region of \ref{ocp}.
\subsection{Notations and terminologies}
Our notation and terminology for manifolds follows exactly the standard literature \cite{absil2008optimization} and for tensors \cite{kolda2009tensor}.

Throughout this paper, vectors are written in italic lowercase letters such as $u$, $v$, matrices correspond to uppercase letters, e.g., $A$, $B$, and tensors are denoted by calligraphic capital letters such as $\mathscr{A}$, $\mathscr{B}$. A slice is a two-dimensional section of a tensor, defined by fixing all but two indices. The $k$th frontal slice $ {\mathscr{X}}_{::k} $ of the third-order tensor $ \mathscr{X} \in \mathbb{R}^{n \times  p  \times l} $ is denoted as $ \mathscr{X}^{(k)} $. The manifold is denoted by $\mathcal{M}$. In this paper, $\mathcal{M}$ specifically represents the feasible set of \ref{ocp}, that is, $ \mathcal{M}: = \{ X \in \mathcal{F} : X^{\top}\phi(X) = I \} $. The tangent space and normal space of $\mathcal{M}$ at $X$ are denoted by $T_X \mathcal{M}$ and $N_X \mathcal{M}$, respectively. For a given matrix $A \in \mathbb{R}^{n \times n}$, we denote its trace as $ \operatorname{tr}(A) $. Then, the Euclidean inner product of two matrices $ X, Y \in \mathbb{R}^{n \times k} $ is defined as $ \left\langle X, Y \right\rangle = \operatorname{tr} (X^{\top}Y) $. The Frobenius norm of a matrix $ A $ induced by this inner product is $ \| A \| = \sqrt{\left\langle A, A \right\rangle} $. We set the Riemannian  metric on the manifold $\mathcal{M}$ as the metric inherited from the standard inner product in $ \mathbb{R}^{n \times p} $. For any square matrix $ T $ we define the generalized symmetrization mapping $ \Phi(T) = T^{\top} + \psi(T) $. Furthermore, the range and kernel of $ \Phi $ are defined as $ \mathcal{S}_1: = \{ \Phi(T) : T \in \mathcal{G} \} $ and $ \mathcal{S}_2 := \{ T \in \mathcal{G}: \Phi(T) = 0  \} $, respectively.

Next, we introduce the constraint mapping
\begin{equation}\label{eq:C}
  C(X):= X^{\top}\phi(X) - I_p.
\end{equation}
It can be seen that the feasible region of \ref{ocp} is precisely the kernel of $C$. By comparing the constraint dissolving operators on the Stiefel and the generalized Stiefel manifolds \cite{xiao2024dissolving,jiang2026smooth}, we define the following mapping
\begin{equation}\label{eq:A}
  \mathcal{A}(X) := \frac{3}{2}X - \frac{1}{2}X\phi(X)^{\top}X.
\end{equation}
The corresponding composite mapping is
\begin{equation*}
  (C \circ \mathcal{A})(X) = \frac{9}{4}X^{\top}\phi(X) - \frac{3}{2}(X^{\top}\phi(X))^2 + \frac{1}{4}(X^{\top}\phi(X))^3 - I_p.
\end{equation*}
For simplicity, we define the function $ g(X): \mathcal{F} \to \mathbb{R} $ and the mapping $ G(X) : \mathcal{F} \to \mathcal{F} $ as follows
\begin{equation*}
  g(X) := f(\mathcal{A}(X)), G(X) := \nabla f(\mathcal{A}(X)),
\end{equation*}
respectively.
We now present the Fr\'echet derivatives of the mappings $C$, $ \mathcal{A} $ and $ C \circ \mathcal{A} $ as follows.
\begin{lemma}\label{le1}
  For any $ X, Z \in \mathcal{F} $, $ T \in \mathcal{G} $ it holds that
  \begin{equation}\label{d1}
    {\rm D}C(X)[Z] = X^{\top}\phi(Z) + Z^{\top}\phi(X),
  \end{equation}
  and its adjoint is
  \begin{equation}\label{d2}
    {\rm D}C(X)^*[T] = \phi(X)\Phi(T).
  \end{equation}
\end{lemma}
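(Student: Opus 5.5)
The plan is to compute the derivative directly from the definition, and then obtain the adjoint by rewriting $\langle {\rm D}C(X)[Z], T\rangle$ as $\langle Z, \cdot\,\rangle$ using Assumption \ref{ass_1}.

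\emph{Derivative.} Since $\phi$ is linear, $C(X)=X^\top\phi(X)-I_p$ is a quadratic map on $\mathcal{F}$. Expanding gives
\[
C(X+tZ)=C(X)+t\bigl(X^\top\phi(Z)+Z^\top\phi(X)\bigr)+t^2 Z^\top\phi(Z).
\]
Reading off the linear term yields \eqref{d1}. I also note that ${\rm D}C(X)[Z]\in\mathcal{G}$: because $\phi$ maps $\mathcal{F}$ into $\mathcal{F}$, both $X^\top\phi(Z)$ and $Z^\top\phi(X)$ lie in $\mathcal{G}$. So ${\rm D}C(X)$ is a linear map $\mathcal{F}\to\mathcal{G}$, and its adjoint is taken with respect to the Frobenius inner product restricted to these two subspaces. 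I will also record that $\mathcal{G}$ is closed under transposition, since $(X_1^\top X_2)^\top=X_2^\top X_1$.

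\emph{Adjoint.} Fix $T\in\mathcal{G}$ and $Z\in\mathcal{F}$, and split $\langle {\rm D}C(X)[Z],T\rangle$ into two terms.

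For the second term, cyclicity of the trace gives
\[
\operatorname{tr}(T^\top Z^\top\phi(X))=\langle Z,\phi(X)T^\top\rangle.
\]

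For the first term, I rewrite it as
\[
\operatorname{tr}(T^\top X^\top\phi(Z))=\langle XT,\phi(Z)\rangle.
\]
By Assumption \ref{ass_1}(1), $XT\in\mathcal{F}$. Self-adjointness of $\phi$ on $\mathcal{F}$ then gives $\langle XT,\phi(Z)\rangle=\langle\phi(XT),Z\rangle$. By Assumption \ref{ass_1}(3), $\phi(XT)=\phi(X)\psi(T)$.

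Summing the two terms,
\[
\langle {\rm D}C(X)[Z],T\rangle=\langle Z,\phi(X)(T^\top+\psi(T))\rangle=\langle Z,\phi(X)\Phi(T)\rangle .
\]

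The main subtle step is that this identifies the adjoint only if the representer $\phi(X)\Phi(T)$ itself lies in $\mathcal{F}$. Otherwise one would only have the adjoint up to an orthogonal projection onto $\mathcal{F}$. This holds for the following reasons:
\begin{itemize}
\item $\phi(X)\in\mathcal{F}$.
\item $T^\top\in\mathcal{G}$ by the transpose closure noted above.
\item $\psi(T)\in\mathcal{G}$ by the definition of $\psi$.
\item Hence $\Phi(T)\in\mathcal{G}$, and Assumption \ref{ass_1}(1) applied to $\phi(X)\in\mathcal{F}$ gives $\phi(X)\Phi(T)\in\mathcal{F}$.
\end{itemize}
By uniqueness of the adjoint, this establishes \eqref{d2}.
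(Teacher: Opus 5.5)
Your proof is correct and complete. The paper states this lemma without proof, and your direct computation is the natural verification, mirroring how the paper obtains the adjoint of ${\rm D}\mathcal{A}(X)$ in Lemma~\ref{le2}: you expand the quadratic map $C$, then move everything onto $Z$ via trace cyclicity, self-adjointness of $\phi$ on $\mathcal{F}$, and parts 1 and 3 of Assumption~\ref{ass_1}. Your additional check that $\phi(X)\Phi(T)\in\mathcal{F}$, using that $\mathcal{G}$ is closed under transposition, is left implicit in the paper, but it is genuinely needed to identify the adjoint exactly rather than only up to projection onto $\mathcal{F}$.
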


\begin{lemma}\label{le2}
  For any $ X, Z \in \mathcal{F} $, it holds that
  \begin{equation}\label{d3}
    {\rm D}\mathcal{A}(X)[Z] = \frac{3}{2}Z - \frac{1}{2}\left( Z\left(\phi(X) \right)^{\top}X + X\left(\phi(Z) \right)^{\top}X + X\left(\phi(X) \right)^{\top}Z \right).
  \end{equation}
  Moreover, its adjoint is
  \begin{equation}
    {\rm D}\mathcal{A}(X)^*[T] = T\left(\frac{3}{2}I_p - \frac{1}{2}X^{\top}\phi(X)\right) - \frac{1}{2}\phi(X)\left(  X^{\top}T + \psi(T^{\top}X)  \right).
  \end{equation}
\end{lemma}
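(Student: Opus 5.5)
The derivative formula in Lemma \ref{le2} follows by a product-rule expansion, and the adjoint by a trace computation that moves each term's $Z$-dependence to one side of the inner product. Since $\phi$ is linear, $\mathcal{A}(X+tZ)=\frac32(X+tZ)-\frac12(X+tZ)\phi(X+tZ)^\top(X+tZ)$. Expanding this cubic expression in $t$ and keeping the linear coefficient gives exactly the three terms $Z\phi(X)^\top X$, $X\phi(Z)^\top X$ and $X\phi(X)^\top Z$ in \eqref{d3}. Assumption \ref{ass_1}(1) guarantees that each of these terms lies in $\mathcal{F}$, because products such as $\phi(X)^\top X$ belong to $\mathcal{G}$. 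Hence ${\rm D}\mathcal{A}(X)$ indeed maps $\mathcal{F}$ into $\mathcal{F}$.

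For the adjoint, I compute $\langle {\rm D}\mathcal{A}(X)[Z],T\rangle$ and treat the four terms separately.
\begin{itemize}
\item The first term is immediate: $\langle \frac32 Z,T\rangle=\langle Z,\frac32 T\rangle$.
\item For the second term, I use $\langle Z\phi(X)^\top X,T\rangle=\operatorname{tr}(X^\top\phi(X)Z^\top T)=\langle Z,T\,X^\top\phi(X)\rangle$, by cyclicity of the trace and a transpose.
\item For the fourth term, $\langle X\phi(X)^\top Z,T\rangle=\operatorname{tr}(Z^\top\phi(X)X^\top T)=\langle Z,\phi(X)X^\top T\rangle$.
\end{itemize}
Together these produce $T(\frac32 I_p-\frac12X^\top\phi(X))-\frac12\phi(X)X^\top T$.

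The third term, $\langle X\phi(Z)^\top X,T\rangle$, is where the structural assumptions enter, and I expect it to be the main obstacle. I first rewrite it as $\operatorname{tr}(X^\top\phi(Z)^\top X T^\top)$, which equals $\langle \phi(Z), X T^\top X\rangle$ up to the transposition conventions, by cyclicity. Since $T^\top X$ lies in $\mathcal{G}$ (the case of interest is $T\in\mathcal{F}$), Assumption \ref{ass_1}(1) gives $X(T^\top X)\in\mathcal{F}$. Self-adjointness of $\phi$ (Assumption \ref{ass_1}(2)) then moves $\phi$ onto the other argument, yielding $\langle Z,\phi(X\,T^\top X)\rangle$. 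Finally, Assumption \ref{ass_1}(3) gives $\phi(X\,T^\top X)=\phi(X)\psi(T^\top X)$, which supplies the remaining term $-\frac12\phi(X)\psi(T^\top X)$.

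The care needed is in the transpose bookkeeping: one must check that the argument is $X(T^\top X)$ and not $X(X^\top T)$. To confirm this, I would write the trace out explicitly: $\operatorname{tr}(X^\top \phi(Z)^\top X T^\top)=\operatorname{tr}(\phi(Z)^\top XT^\top X)=\langle\phi(Z),XT^\top X\rangle$. Combining all the terms gives the stated adjoint. One should also note that each piece lies in $\mathcal{F}$, by Assumption \ref{ass_1}(1) together with the fact that $\phi$ maps $\mathcal{F}$ to $\mathcal{F}$, so the adjoint is taken within $\mathcal{F}$ as required.
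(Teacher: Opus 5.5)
Your proposal is correct and follows the direct product-rule and trace computation the paper intends (the paper states Lemma~\ref{le2} without proof), with Assumption~\ref{ass_1}(1)--(3) used exactly where you use them for the term $X\phi(Z)^\top X$. One slip in the transpose bookkeeping you flagged: the trace should be $\operatorname{tr}(X\phi(Z)^\top X T^\top)$, not $\operatorname{tr}(X^\top\phi(Z)^\top X T^\top)$ (the latter is not dimensionally consistent); a cyclic shift of the corrected expression gives $\operatorname{tr}(\phi(Z)^\top X T^\top X)=\langle \phi(Z), XT^\top X\rangle$, so your conclusion stands.
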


\begin{lemma}\label{le3}
  For any $ X, Z \in \mathcal{F} $, it holds that
  \begin{equation}\label{d4}
     \begin{aligned}
            {\rm D}(C \circ \mathcal{A})(X)[Z] & = \frac{9}{4}{\rm D}C(X)[Z] - \frac{3}{2}\left({\rm D}C(X)[Z] X^{\top}\phi(X)+ X^{\top}\phi(X){\rm D}C(X)[Z]\right) \\
             +  & \frac{1}{4}\left( {\rm D}C(X)[Z]\left( X^{\top}\phi(X) \right)^2 + X^{\top}\phi(X){\rm D}C(X)[Z]X^{\top}\phi(X) + \left( X^{\top}\phi(X) \right)^2{\rm D}C(X)[Z] \right).
     \end{aligned}
  \end{equation}
\end{lemma}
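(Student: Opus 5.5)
The formula in Lemma \ref{le3} comes from the chain rule applied to a matrix polynomial in $M(X) := X^{\top}\phi(X)$. Using the closed form stated just before Lemma \ref{le1},
\begin{equation*}
(C\circ\mathcal{A})(X) = \tfrac{9}{4}M(X) - \tfrac{3}{2}M(X)^2 + \tfrac{1}{4}M(X)^3 - I_p,
\end{equation*}
the composite is a fixed polynomial $P(M) = \tfrac94 M - \tfrac32 M^2 + \tfrac14 M^3 - I_p$ evaluated at $M(X)$. I will therefore not differentiate through $\mathcal{A}$ and Lemma \ref{le2}; differentiating this expression directly is cleaner.

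First I would verify the closed form, since the lemma rests on it. Expanding
\begin{equation*}
\mathcal{A}(X)^{\top}\phi(\mathcal{A}(X)) = \left(\tfrac32 X^{\top} - \tfrac12 X^{\top}\phi(X)X^{\top}\right)\left(\tfrac32\phi(X) - \tfrac12 \phi(X\phi(X)^{\top}X)\right)
\end{equation*}
requires Assumption \ref{ass_1}. Item 1 guarantees $X\phi(X)^{\top}X \in \mathcal{F}$, because $\phi(X)^{\top}X \in \mathcal{G}$. Item 3 gives $\phi(X\phi(X)^{\top}X) = \phi(X)\psi(\phi(X)^{\top}X) = \phi(X)X^{\top}\phi(X)$. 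With this, every term is a power of $M$: the cross terms give $-\tfrac34 M^2 - \tfrac34 M^2$, the last term gives $\tfrac14 M^3$, and the first gives $\tfrac94 M$.

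Next, $X \mapsto M(X)$ is bilinear in $X$, so its Fréchet derivative is
\begin{equation*}
{\rm D}M(X)[Z] = X^{\top}\phi(Z) + Z^{\top}\phi(X) = {\rm D}C(X)[Z],
\end{equation*}
which is exactly \eqref{d1} in Lemma \ref{le1}. I would then apply the product rule for noncommuting matrix products, writing $H := {\rm D}C(X)[Z]$:
\begin{equation*}
{\rm D}(M^2)[Z] = HM + MH, \qquad {\rm D}(M^3)[Z] = HM^2 + MHM + M^2H.
\end{equation*}
Combining these with the coefficients $\tfrac94$, $-\tfrac32$ and $\tfrac14$ reproduces \eqref{d4} term by term.

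The main obstacle is justifying the closed form of $C\circ\mathcal{A}$, specifically the use of item 3 of Assumption \ref{ass_1}. Two points need checking there. First, $\psi$ is applied to $\phi(X)^{\top}X$, which must lie in $\mathcal{G}$; this holds because $\phi$ maps $\mathcal{F}$ to $\mathcal{F}$. Second, the identity $\psi((\phi(X))^{\top}X) = X^{\top}\phi(X)$ is used with the right orientation, which is what makes $\phi(X)\psi(\phi(X)^{\top}X) = \phi(X)X^{\top}\phi(X)$. Once the closed form is in hand, the remainder is the routine polynomial differentiation above, and it needs no symmetry or commutativity of $M$ and $H$, which in general fail.
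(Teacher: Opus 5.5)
Your proof is correct and takes essentially the paper's route. The paper gives no written proof, but it evidently obtains the lemma by differentiating the cubic closed form $\tfrac94 M-\tfrac32 M^2+\tfrac14 M^3-I_p$ in $M=X^{\top}\phi(X)$ (displayed just before Lemma~\ref{le1}) with ${\rm D}M(X)[Z]={\rm D}C(X)[Z]$ and the noncommutative product rule; your check of that closed form via items 1 and 3 of Assumption~\ref{ass_1} (including $\phi(X)^{\top}X\in\mathcal{G}$ and $X\phi(X)^{\top}X\in\mathcal{F}$) fills in a step the paper leaves implicit.
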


\subsection{Constants}
This subsection starts with the following assumptions.
\begin{assumption}\label{ass_2}
  \begin{itemize}
    \item The objective function $ f $ is locally Lipschitz continuous and smooth on $\mathcal{F}$.
    \item The mapping $ \phi $ is Lipschitz continuous on $\mathcal{F}$.
  \end{itemize}
\end{assumption}
Next, we introduce some necessary constants. Let $ \delta_{C,X} $ denote the smallest singular value of ${\rm D}C(X)$ i.e., for any $ X, Z \in \mathcal{F} $, it holds that
\begin{equation*}
    \| {\rm D}C(X)[Z] \| \geq \delta_{C,X}\| Z \|.
\end{equation*}
For any $ X \in \mathcal{M} $, we define the positive scalar $ \rho_X \leq 1 $ as
\begin{align*}
  \rho_X := & {\arg\max}_{0 < \rho \leq 1} \rho\\
  {\rm s.t.} &  \| {\rm D}C(Y)[Z] \| \geq \delta_{C,X}\| Z \|,\\
   & \| Y - X \| \leq \rho, \\
   & Y \in \mathcal{F}.
\end{align*}
We then denote $ \Delta_X := \{ Y \in \mathcal{F}: \| Y-X \| \leq \rho_X \} $. Based on this local set, we introduce the following constants:
\begin{itemize}
    \item $ M_{X,f} := \sup_{Y \in \Delta_X } \| \nabla f(\mathcal{A}(Y)) \| $;
    \item $ L_\phi := \sup_{Y_1,Y_2 \in \mathcal{F}, Y_1 \neq Y_2} \frac{\| \phi(Y_1) - \phi(Y_2)) \|}{\|Y_1 - Y_2\|} $;
    \item $ L_{X,g} := \sup_{Y_1,Y_2 \in \Delta_X, Y_1 \neq Y_2} \frac{\| \nabla f(Y_1) - \nabla f(Y_2)) \|}{\|Y_1 - Y_2\|} $.
\end{itemize}

We then estimate the Lipschitz constants of the differentials of several mappings used in this paper.

\begin{lemma}
  For any $ X \in \mathcal{M}, Z \in \mathcal{F} $, $ Y_1, Y_2 \in \Delta_X $, it holds that
  \begin{equation*}
    \| {\rm D}C(Y_1)[Z] - {\rm D}C(Y_2)[Z] \| \leq 2 L_{\phi}\| Z \|  \| Y_1 - Y_2 \|.
  \end{equation*}
\end{lemma}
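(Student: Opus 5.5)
We use the explicit formula \eqref{d1} from Lemma \ref{le1}. Since ${\rm D}C(Y)[Z] = Y^{\top}\phi(Z) + Z^{\top}\phi(Y)$ is affine in $Y$ for fixed $Z$, the difference is
\begin{equation*}
  {\rm D}C(Y_1)[Z] - {\rm D}C(Y_2)[Z] = (Y_1 - Y_2)^{\top}\phi(Z) + Z^{\top}\left(\phi(Y_1) - \phi(Y_2)\right).
\end{equation*}
We then apply the triangle inequality and submultiplicativity of the Frobenius norm, $\|AB\|\leq \|A\|\,\|B\|$, to each of the two terms.

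For the second term, the definition of $L_\phi$ gives $\|\phi(Y_1)-\phi(Y_2)\| \leq L_\phi \|Y_1 - Y_2\|$, so
\begin{equation*}
  \|Z^{\top}(\phi(Y_1)-\phi(Y_2))\| \leq L_\phi\|Z\|\,\|Y_1-Y_2\|.
\end{equation*}
For the first term we need $\|\phi(Z)\| \leq L_\phi \|Z\|$. Since $\phi$ is linear (Assumption \ref{ass_1}), we have $\phi(0)=0$, and taking $Y_1 = Z$, $Y_2 = 0$ in the definition of $L_\phi$ yields $\|\phi(Z)\| \leq L_\phi\|Z\|$. (Equivalently, $L_\phi$ is the operator norm of $\phi$ on $\mathcal{F}$.) This bounds the first term by $L_\phi\|Z\|\,\|Y_1-Y_2\|$ as well. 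Adding the two bounds gives the factor $2L_\phi$.

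There is no real obstacle. The only point to state explicitly is that linearity of $\phi$ is what lets the Lipschitz constant $L_\phi$ also bound $\|\phi(Z)\|$. The restriction $Y_1,Y_2\in\Delta_X$ and $X\in\mathcal{M}$ plays no role: the estimate holds globally on $\mathcal{F}$.
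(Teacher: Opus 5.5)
Your proof is correct. The paper states this lemma without proof, but where it bounds $T_1$ in the lemma on ${\rm D}(C\circ\mathcal{A})$ it uses the same argument: it writes $\|T_1(Y_1)-T_1(Y_2)\|\leq(\|\phi(Z)\|+L_\phi\|Z\|)\|Y_1-Y_2\|$ and later replaces $\|\phi(Z)\|$ by $L_\phi\|Z\|$, which is your two-term split plus Frobenius submultiplicativity. Your observation that linearity of $\phi$ gives $\|\phi(Z)\|\leq L_\phi\|Z\|$ makes explicit the one step the paper leaves unstated.
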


\begin{lemma}\label{lem:DA}
  For any $ X \in \mathcal{M}, Z, T \in \mathcal{F} $, $ Y_1, Y_2 \in \Delta_X $, there exists a constant $\alpha_{\mathcal{A}}$ such that
  \begin{equation*}
    \| {\rm D}\mathcal{A}(Y_1)[Z] - {\rm D}\mathcal{A}(Y_2)[Z] \| \leq \alpha_{\mathcal{A}} L_{\phi}\| Z \|  \| Y_1 - Y_2 \|,
  \end{equation*}
  and
  \begin{equation*}
    \| {\rm D}\mathcal{A}(Y_1)^*[T] - {\rm D}\mathcal{A}(Y_2)^*[T] \| \leq \alpha_{\mathcal{A}} L_{\phi}\| T \|  \| Y_1 - Y_2 \|.
  \end{equation*}

\end{lemma}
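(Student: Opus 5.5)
The plan is a direct expansion of the explicit formulas in Lemma \ref{le2}, bounding each difference by a telescoping (product-rule) argument. Two facts about $\Delta_X$ do most of the work. First, $\Delta_X$ is bounded: every $Y\in\Delta_X$ satisfies $\|Y\|\le \|X\|+\rho_X\le \|X\|+1$. Second, $\phi$ is linear, so $\|\phi(Y)\|\le L_\phi\|Y\|$ and $\|\phi(Y_1)-\phi(Y_2)\|\le L_\phi\|Y_1-Y_2\|$. Write $R_X:=\|X\|+1$. All constants will be polynomials in $R_X$ (and, for the adjoint, in the norm of $\psi$), and $\alpha_{\mathcal{A}}$ is taken as the largest of them. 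So $\alpha_{\mathcal{A}}$ is allowed to depend on $X$ through $R_X$, which is consistent with the paper's use of $X$-local constants.

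For the first inequality, the term $\frac32 Z$ cancels. What remains is $-\frac12$ times the difference of three trilinear expressions:
\begin{equation*}
Z\phi(Y)^\top Y,\qquad Y\phi(Z)^\top Y,\qquad Y\phi(Y)^\top Z,
\end{equation*}
evaluated at $Y_1$ and at $Y_2$. For each one I telescope, for example
\begin{equation*}
Z\phi(Y_1)^\top Y_1 - Z\phi(Y_2)^\top Y_2 = Z\,(\phi(Y_1)-\phi(Y_2))^\top Y_1 + Z\,\phi(Y_2)^\top (Y_1-Y_2).
\end{equation*}
Submultiplicativity of the Frobenius norm then bounds this by $2L_\phi R_X\|Z\|\|Y_1-Y_2\|$. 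The other two terms work the same way, using $\|\phi(Z)\|\le L_\phi\|Z\|$. Summing, the first inequality holds with constant $3R_X$; the bound is proportional to $L_\phi$ because every term contains exactly one factor of $\phi$.

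For the adjoint I use the formula in Lemma \ref{le2} and treat its terms separately.
\begin{itemize}
\item The term $T(\frac32 I-\frac12 Y^\top\phi(Y))$ gives the difference $-\frac12 T(Y_1^\top\phi(Y_1)-Y_2^\top\phi(Y_2))$, which telescopes exactly as before to at most $L_\phi R_X\|T\|\|Y_1-Y_2\|$.
\item The term $\phi(Y)Y^\top T$ is trilinear and telescopes the same way.
\item The term $\phi(Y)\psi(T^\top Y)$ contains $\psi$, which is linear on the finite-dimensional space $\mathcal{G}$ and hence bounded, say $\|\psi\|\le c_\psi$. Telescoping gives
\begin{equation*}
(\phi(Y_1)-\phi(Y_2))\psi(T^\top Y_1)+\phi(Y_2)\psi(T^\top(Y_1-Y_2)),
\end{equation*}
which is bounded by $2c_\psi L_\phi R_X\|T\|\|Y_1-Y_2\|$.
\end{itemize}
Alternatively, one can note that the adjoint of a linear operator has the same operator norm, so $\|{\rm D}\mathcal{A}(Y_1)^*-{\rm D}\mathcal{A}(Y_2)^*\|_{op}=\|{\rm D}\mathcal{A}(Y_1)-{\rm D}\mathcal{A}(Y_2)\|_{op}$. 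Since the two differences are adjoint to each other, the second inequality then follows immediately from the first with the same constant. I would present this cleaner argument and keep the direct computation as a check.

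The main subtlety is bookkeeping rather than difficulty. One must make sure the constant is uniform over $Y_1,Y_2\in\Delta_X$, which follows from the bound $\|Y\|\le\|X\|+1$. One must also make sure the constant depends only on $X$ (through $\|X\|$) and not on $Z$ or $T$. Using the adjoint-norm argument avoids having to control $\psi$ at all.
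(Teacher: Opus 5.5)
Your proof is correct and takes the same approach as the paper for the first inequality. As in the paper, you cancel the $\frac32 Z$ term, telescope each of the three trilinear pieces, and use $\|Y\|\le\|X\|+\rho_X\le\|X\|+1$ on $\Delta_X$. Your constant $3(\|X\|+1)$ also matches the paper's later remark that $\|Y\|\le\frac13\alpha_{\mathcal{A}}$.

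For the adjoint, the paper only says ``in the same way''. Your observation that $\|L^*\|_{\mathrm{op}}=\|L\|_{\mathrm{op}}$ on the Hilbert space $\mathcal{F}$ makes that step rigorous with the same constant and no bound on $\psi$ needed. This uses the fact that the formula in Lemma~\ref{le2} is a genuine adjoint on $\mathcal{F}$, which Assumption~\ref{ass_1} guarantees.
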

\begin{proof}
  For any $ Y_1, Y_2 \in \Delta_X $, we calculate that
  \begin{equation*}
    \begin{aligned}
        & \left\| {\rm D}\mathcal{A}(Y_1)[Z] - {\rm D}\mathcal{A}(Y_2)[Z] \right\| \\
        = & \frac{1}{2}\left\| Z\phi(Y_1)^{\top}Y_1 + Y_1\phi(Z)^{\top}Y_1 + Y_1\phi(Y_1)^{\top}Z - Z\phi(Y_2)^{\top}Y_2 - Y_2\phi(Z)^{\top}Y_2 - Y_2\phi(Y_2)^{\top}Z \right\| \\
        \leq & \frac{1}{2}\left( \left\| Z\phi(Y_1)^{\top}Y_1 - Z\phi(Y_2)^{\top}Y_2 \right\| + \left\| Y_1\phi(Z)^{\top}Y_1 - Y_2\phi(Z)^{\top}Y_2 \right\| + \left\| Y_1\phi(Y_1)^{\top}Z - Y_2\phi(Y_2)^{\top}Z \right\| \right).
    \end{aligned}
  \end{equation*}
  We control the three terms on the right hand side of the above inequality separately as
  \begin{equation*}
    \begin{aligned}
        \left\| Z\phi(Y_1)^{\top}Y_1 - Z\phi(Y_2)^{\top}Y_2 \right\| = & \left\| \left( Z\phi(Y_1)^{\top}Y_1 - Z\phi(Y_1)^{\top}Y_2 \right) + \left( Z\phi(Y_1)^{\top}Y_2 - Z\phi(Y_2)^{\top}Y_2\right) \right\| \\
        \leq & \left\|  Z\phi(Y_1)^{\top}Y_1 - Z\phi(Y_1)^{\top}Y_2 \right\| + \left\| Z\phi(Y_1)^{\top}Y_2 - Z\phi(Y_2)^{\top}Y_2 \right\| \\
        \leq & L_\phi \|Z\| \|Y_1\| \| Y_1 - Y_2 \| + L_\phi \|Z\| \|Y_2\| \| Y_1 - Y_2 \|\\
        \leq & L_\phi \|Z\| \left( \|Y_1\| + \|Y_2\| \right) \| Y_1 - Y_2 \|,
    \end{aligned}
  \end{equation*}
  \begin{equation*}
    \begin{aligned}
        \left\| Y_1\phi(Z)^{\top}Y_1 - Y_2\phi(Z)^{\top}Y_2 \right\| = & \left\| \left( Y_1\phi(Z)^{\top}Y_1 - Y_1\phi(Z)^{\top}Y_2 \right) + \left( Y_1\phi(Z)^{\top}Y_2 - Y_2\phi(Z)^{\top}Y_2 \right) \right\| \\
        \leq & \left\| Y_1\phi(Z)^{\top}Y_1 - Y_1\phi(Z)^{\top}Y_2 \right\| + \left\| Y_1\phi(Z)^{\top}Y_2 - Y_2\phi(Z)^{\top}Y_2 \right\| \\
        \leq & L_\phi \|Z\| \|Y_1\| \| Y_1 - Y_2 \| + L_\phi \|Z\| \|Y_2\| \| Y_1 - Y_2 \|\\
        \leq & L_\phi \|Z\| \left( \|Y_1\| + \|Y_2\| \right) \| Y_1 - Y_2 \|,
    \end{aligned}
  \end{equation*}
  and
  \begin{equation*}
    \begin{aligned}
        \left\| Y_1\phi(Y_1)^{\top}Z - Y_2\phi(Y_2)^{\top}Z \right\| = & \left\| \left( Y_1\phi(Y_1)^{\top}Z - Y_1\phi(Y_2)^{\top}Z \right) + \left( Y_1\phi(Y_2)^{\top}Z - Y_2\phi(Y_2)^{\top}Z\right) \right\| \\
        \leq & \left\|  Y_1\phi(Y_1)^{\top}Z - Y_1\phi(Y_2)^{\top}Z \right\| + \left\| Y_1\phi(Y_2)^{\top}Z - Y_2\phi(Y_2)^{\top}Z \right\| \\
        \leq & L_\phi \|Z\| \|Y_1\| \| Y_1 - Y_2 \| + L_\phi \|Z\| \|Y_2\| \| Y_1 - Y_2 \|\\
        \leq & L_\phi \|Z\| \left( \|Y_1\| + \|Y_2\| \right) \| Y_1 - Y_2 \|.
    \end{aligned}
  \end{equation*}
  Combining these three terms, and considering that $ \|Y_1\| $ and $ \|Y_2\| $ are bounded, one obtains that there exists a constant $\alpha_{\mathcal{A}}$ such that
  \begin{equation*}
    \| {\rm D}\mathcal{A}(Y_1)[Z] - {\rm D}\mathcal{A}(Y_2)[Z] \| \leq \alpha_{\mathcal{A}} L_{\phi}\| Z \|  \| Y_1 - Y_2 \|.
  \end{equation*}
  In the same way we can deflate $ \| {\rm D}\mathcal{A}(Y_1)^*[T] - {\rm D}\mathcal{A}(Y_2)^*[T] \| $ and the result is the same as above. Thus the proof is completed.
\end{proof}

The proof of Lemma \ref{lem:DA} shows that for any $ Y \in \Delta_X $, the estimate $ \|Y\| \leq \frac{1}{3}\alpha_{\mathcal{A}} $ holds. Furthermore, we can conclude that
\begin{equation}
    \| {\rm D}\mathcal{A}(Y)[Z]\| \leq (\frac{3}{2} + \frac{1}{6}\alpha_{\mathcal{A}}^2L_\phi)\| Z \|,
\end{equation}
where $ Z \in \mathcal{F} $.

\begin{lemma}
  For any $ X \in \mathcal{M}, Z \in \mathcal{F} $, $ Y_1, Y_2 \in \Delta_X $, there exists a constant $\alpha_{C,\mathcal{A}}$ such that
  \begin{equation*}
    \| {\rm D}(C \circ \mathcal{A})(Y_1)[Z] - {\rm D}(C \circ \mathcal{A})(Y_2)[Z] \| \leq 2 \alpha_{C,\mathcal{A}} L_{\phi}\| Z \| \| Y_1 - Y_2 \|.
  \end{equation*}
\end{lemma}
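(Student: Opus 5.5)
We use the explicit formula of Lemma \ref{le3}. Write $B(Y) := Y^{\top}\phi(Y)$ and $E(Y) := {\rm D}C(Y)[Z]$. Then ${\rm D}(C\circ\mathcal{A})(Y)[Z]$ is a linear combination, with fixed coefficients $\frac94, -\frac32, \frac14$, of the six products
\begin{equation*}
E,\quad EB,\quad BE,\quad EB^2,\quad BEB,\quad B^2E,
\end{equation*}
all evaluated at $Y$. The difference at $Y_1$ and $Y_2$ therefore splits, by the triangle inequality, into differences of these monomials. Each monomial difference is then handled by the telescoping device already used in the proof of Lemma \ref{lem:DA}: replace one factor at a time. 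For instance,
\begin{equation*}
E_1B_1^2 - E_2B_2^2 = (E_1-E_2)B_1^2 + E_2(B_1-B_2)B_1 + E_2B_2(B_1-B_2).
\end{equation*}

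Two basic estimates are needed for this.

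\emph{The factor $E$.} The lemma preceding Lemma \ref{lem:DA} gives $\|E(Y_1)-E(Y_2)\|\le 2L_\phi\|Z\|\|Y_1-Y_2\|$. For a uniform bound on $E$ itself, use \eqref{d1} and linearity of $\phi$ with $\phi(0)=0$, so that $\|\phi(W)\|\le L_\phi\|W\|$. This yields $\|E(Y)\|\le 2L_\phi\|Y\|\|Z\|$.

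\emph{The factor $B$.} Write
\begin{equation*}
B(Y_1)-B(Y_2) = (Y_1-Y_2)^{\top}\phi(Y_1) + Y_2^{\top}\phi(Y_1-Y_2),
\end{equation*}
which gives $\|B(Y_1)-B(Y_2)\|\le L_\phi(\|Y_1\|+\|Y_2\|)\|Y_1-Y_2\|$. Similarly $\|B(Y)\|\le L_\phi\|Y\|^2$.

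On $\Delta_X$ we have $\|Y\|\le \|X\|+\rho_X\le \|X\|+1$; the paper also uses $\|Y\|\le\frac13\alpha_{\mathcal{A}}$. So every norm factor above is bounded by a constant depending only on $X$ and $L_\phi$.

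Substituting these bounds into the six telescoped differences, each term has the form (constant)$\cdot L_\phi\|Z\|\|Y_1-Y_2\|$. Either the differenced factor is $E$, which contributes $2L_\phi\|Z\|\|Y_1-Y_2\|$ times powers of $\|B\|$; or the differenced factor is $B$, which contributes $L_\phi(\|Y_1\|+\|Y_2\|)\|Y_1-Y_2\|$, with the remaining $E$ factor supplying $\|Z\|$. Collecting all the multiplicative constants, namely powers of $L_\phi\|Y\|^2$ together with the coefficients $\frac94,\frac32,\frac14$, into a single constant and factoring out $2L_\phi$ defines $\alpha_{C,\mathcal{A}}$ and gives the claim.

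The only mildly delicate point is the $B$-difference terms. There the extra $L_\phi$ coming from $\|E\|\le 2L_\phi\|Y\|\|Z\|$ must be absorbed into $\alpha_{C,\mathcal{A}}$, since the statement allows the constant to depend on $L_\phi$ through the bounded quantities. Beyond that, the work is bookkeeping of the terms, in the same style as Lemma \ref{lem:DA}.
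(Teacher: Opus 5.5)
Your proposal is correct and follows the same route as the paper. The paper also expands ${\rm D}(C\circ\mathcal{A})(Y)[Z]$ via Lemma~\ref{le3} into its $T_1,T_2,T_3$, which are your monomials in $E$ and $B$. It then telescopes one factor at a time and absorbs the bounds on $\|Y\|$, $\|B\|$ and $\|E\|$ over $\Delta_X$ into $\alpha_{C,\mathcal{A}}$, which in both versions depends on $X$ and $L_\phi$. Your bounds $\|B(Y_1)-B(Y_2)\|\le L_\phi(\|Y_1\|+\|Y_2\|)\|Y_1-Y_2\|$ and $\|B(Y)\|\le L_\phi\|Y\|^2$ just make concrete the paper's $L_{X\phi}$ and $\sigma_{\max}$. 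You also carry out explicitly the cubic ($T_3$) case that the paper only asserts.
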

\begin{proof}
  It follows from Lemma \ref{le3} that
  \begin{equation*}
    {\rm D}(C \circ \mathcal{A})(X)[Z] = \frac{9}{4}T_1(X) - \frac{3}{2}T_2(X) + \frac{1}{4}T_3(X),
  \end{equation*}
  where
  \begin{align*}
    T_1(X) = & X^{\top}\phi(Z) + Z^{\top}\phi(X), \\
    T_2(X) = & (X^{\top}\phi(Z) + Z^{\top}\phi(X))X^{\top}\phi(X) + X^{\top}\phi(X)(X^{\top}\phi(Z) + Z^{\top}\phi(X)),\\
    T_3(X) = & (X^{\top}\phi(Z) + Z^{\top}\phi(X))(X^{\top}\phi(X))^2 + X^{\top}\phi(X)(X^{\top}\phi(Z) + Z^{\top}\phi(X))X^{\top}\phi(X) \\
    & + (X^{\top}\phi(X))^2(X^{\top}\phi(Z) + Z^{\top}\phi(X)).
  \end{align*}
  Notice that
  \begin{equation*}
    \| {\rm D}(C \circ \mathcal{A})(Y_1)[Z] - {\rm D}(C \circ \mathcal{A})(Y_2)[Z] \| \leq \frac{9}{4}\| T_1(Y_1) - T_1(Y_2) \| + \frac{3}{2}\| T_2(Y_1) - T_2(Y_2) \| + \frac{1}{4}\| T_3(Y_1) - T_3(Y_2) \|,
  \end{equation*}
  and
  \begin{equation*}
    \| T_1(Y_1) - T_1(Y_2) \| \leq \left( \| \phi(Z) \| + L_{\phi}\| Z \| \right) \| Y_1 - Y_2 \|.
  \end{equation*}
  We discuss the Lipschitz property of $T_2(X)$ below. We note that $T_2(X)$ can be recharacterized as
  \begin{equation*}
    T_2(X) = T_1(X)X^{\top}\phi(X)+ X^{\top}\phi(X)T_1(X).
  \end{equation*}
  Then we have
  \begin{equation*}
    \begin{aligned}
         T_2(Y_1)-T_2(Y_2)&=T_1(Y_1)Y_1^\top\phi(Y_1)+Y_1^\top\phi(Y_1)T_1(Y_1) - T_1(Y_2)Y_2^\top\phi(Y_2)-Y_2^\top\phi(Y_2)T_1(Y_2) \\
         & = \left[T_1(Y_1)Y_1^\top\phi(Y_1)-T_1(Y_2)Y_2^\top\phi(Y_2)\right] + \left[ Y_1^\top\phi(Y_1)T_1(Y_1)-Y_2^\top\phi(Y_2)T_1(Y_2)\right]
    \end{aligned}
  \end{equation*}
  In the following, we control both $ T_1(Y_1)Y_1^\top\phi(Y_1)-T_1(Y_2)Y_2^\top\phi(Y_2) $ and $ Y_1^\top \phi(Y_1) T_1(Y_1) - Y_2^\top \phi(Y_2) T_1(Y_2) $ separately.
  \begin{equation*}
    \begin{aligned}
        \| T_1(Y_1)Y_1^\top\phi(Y_1)-T_1(Y_2)Y_2^\top\phi(Y_2) \| \leq & \|T_1(Y_1)\|\|Y_1^\top\phi(Y_1)-Y_2^\top\phi(Y_2)\|+\|T_1(Y_1)-T_1(Y_2)\|\|Y_2^\top\phi(Y_2)\| \\
        \leq &\|T_1(Y_1)\|\cdot L_{X\phi}\|Y_1-Y_2\|+\left(\|\phi(Z)\|+L_\phi\|Z\|\right)\|Y_1-Y_2\|\cdot\sigma_{\max},
    \end{aligned}
  \end{equation*}
  where $ L_{X\phi} $ is the Lipschitz constant for $ X^{\top}\phi(X) $ and $\sigma_{\max}$ is the largest eigenvalue of $  Y^{\top}\phi(Y) $ for $ \forall Y \in \Delta_X $. Similarly, we have
  \begin{equation*}
    \| Y_1^\top \phi(Y_1) T_1(Y_1) - Y_2^\top \phi(Y_2) T_1(Y_2) \| \leq \|T_1(Y_1)\|\cdot L_{X\phi}\|Y_1-Y_2\|+\left(\|\phi(Z)\|+L_\phi\|Z\|\right)\|Y_1-Y_2\|\cdot\sigma_{\max}.
  \end{equation*}
  Therefore, we can deduce that
  \begin{equation*}
    \|T_2(Y_1)-T_2(Y_2)\|\leq2\left(\sigma_{\max}\cdot \left( \|\phi(Z)\| + L_\phi\| Z\| \right) + L_{X\phi} \|T_1(Y_1)\| \right) \|Y_1-Y_2\|.
  \end{equation*}
  We find that $ T_3(X) $ is also a polynomial function of $ T_1(X) $ and $ X^{\top}\phi(X) $. Thus $ T_3(X) $ also has a Lipschitz property similar to that of $ T_2(X) $. Further, based on the Lipschitz property of $ T_1(X) $, we get that $ \|T_1(Y_1)\| $ is similarly controllable by $ \|\phi(Z)\|+L_\phi\|Z\| $. Thus we assert that there exists a constant $ \alpha_{C,\mathcal{A}} $ such that
  \begin{equation*}
    \| {\rm D}(C \circ \mathcal{A})(Y_1)[Z] - {\rm D}(C \circ \mathcal{A})(Y_2)[Z] \| \leq \alpha_{C,\mathcal{A}}\left( \| \phi(Z) \| + L_{\phi}\| Z \| \right) \| Y_1 - Y_2 \| \leq 2 \alpha_{C,\mathcal{A}} L_{\phi}\| Z \| \| Y_1 - Y_2 \|.
  \end{equation*}
  The proof is completed.
\end{proof}

In addition, we define
\begin{equation*}
  \varepsilon_X := \min \left\{ \frac{\rho_X}{2}, \frac{\delta_{C,X}}{4L_\phi}, \frac{\delta_{C,X}}{2\alpha_{\mathcal{A}}^2L_\phi^2 + 6L_\phi}, \frac{3\delta_{C,X}^2}{32\alpha_{\mathcal{A}}\alpha_{C,\mathcal{A}}L_\phi^2} \right\},
\end{equation*}
and denote the $\varepsilon_X$ neighborhood of $ X $ as
\begin{equation*}
  \Omega_X := \{ Y \in \mathcal{F}: \| Y-X \| \leq \varepsilon_X \}.
\end{equation*}

\subsection{Riemannian Structure of $ \mathcal{M} $}
In this subsection, the set $ \mathcal{M} $ is shown to be an embedded submanifold of $ \mathcal{F} $ and some important geometric tools are given. These materials allow us to develop manifold optimization methods for solving \ref{ocp}.

\begin{lemma}
  For any $ X \in \mathcal{F} $, it holds that $ C(X) \in \mathcal{S}_1 $.
\end{lemma}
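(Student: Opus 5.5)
We need to show $C(X)=X^\top\phi(X)-I_p\in\mathcal S_1=\{\Phi(T):T\in\mathcal G\}$, with $\Phi(T)=T^\top+\psi(T)$. The plan is to produce an explicit preimage, treating the two summands of $C(X)$ separately and then using linearity of $\Phi$ on $\mathcal G$.

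\textbf{The quadratic part.} Take $T_1:=\tfrac12\phi(X)^\top X$. Since $X,\phi(X)\in\mathcal F$, this lies in $\mathcal G$. By Assumption \ref{ass_1}(3), $\psi(\phi(X)^\top X)=X^\top\phi(X)$. Also $(\phi(X)^\top X)^\top=X^\top\phi(X)$. Hence
\begin{equation*}
\Phi(T_1)=\tfrac12 X^\top\phi(X)+\tfrac12 X^\top\phi(X)=X^\top\phi(X).
\end{equation*}

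\textbf{The identity part.} We need $I_p\in\mathcal S_1$, and this is the main obstacle, because $I_p$ need not lie in $\mathcal G$ a priori. There are two routes.

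The first route is to use a feasible point. If $\mathcal M$ is nonempty, pick $X_0\in\mathcal M$. Then $I_p=X_0^\top\phi(X_0)$, and the quadratic step gives $I_p=\Phi\bigl(\tfrac12\phi(X_0)^\top X_0\bigr)$. This case is implicitly assumed whenever the problem is meaningful.

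The second route avoids feasibility and uses the structure of $\mathcal G$ together with the compatibility $\phi(XT)=\phi(X)\psi(T)$. I would show that $\mathcal G$ is closed under transpose and contains $I_p$. Then I would show that $\psi$ behaves like an involutive adjoint map on $\mathcal G$. Concretely:
\begin{itemize}
\item Self-adjointness of $\phi$ gives $\langle XT,\phi(Y)\rangle=\langle X,\phi(Y)T^\top\rangle$.
\item Combined with $\phi(XT)=\phi(X)\psi(T)$, this yields $\psi(T)=\ldots$ in terms of $T^\top$.
\item From this I would deduce $\psi(I_p)=I_p$, since $\phi(X I_p)=\phi(X)$ and $\phi$ is injective, provided $I_p\in\mathcal G$.
\end{itemize}
It then follows that $\Phi(\tfrac12 I_p)=I_p$.

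\textbf{Conclusion.} With $T:=\tfrac12\phi(X)^\top X-\tfrac12 I_p$, linearity of $\Phi$ gives $\Phi(T)=X^\top\phi(X)-I_p=C(X)$, so $C(X)\in\mathcal S_1$. I expect the justification of $I_p\in\mathcal G$ to be the delicate point. I would first try to argue it from nonemptiness of $\mathcal M$: if $X_0\in\mathcal M$, then $I_p=X_0^\top\phi(X_0)\in\mathcal G$. I would note that the paper's setting implicitly assumes this.
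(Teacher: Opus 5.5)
Your first route is a correct proof, and it is more complete than the paper's. The paper uses exactly your $T_1=\tfrac12\phi(X)^\top X$. It then writes $T^\top+\psi(T)=X^\top\phi(X)=C(X)$, silently dropping the $-I_p$, so as written it only shows $X^\top\phi(X)\in\mathcal{S}_1$. Your handling of the identity term is the missing step, and it closes cleanly. For $X_0\in\mathcal{M}$ you have $I_p=X_0^\top\phi(X_0)\in\mathcal{G}$ and $\phi(X_0)^\top X_0=I_p^\top=I_p$. Assumption~\ref{ass_1}(3) then gives $\psi(I_p)=X_0^\top\phi(X_0)=I_p$ directly. Hence $\Phi(\tfrac12 I_p)=I_p$, your final $T=\tfrac12\phi(X)^\top X-\tfrac12 I_p$ lies in $\mathcal{G}$, and linearity of $\Phi$ on the subspace $\mathcal{G}$ finishes the proof. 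No adjoint computation for $\psi$ is needed.

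You should, however, discard the second route. State $\mathcal{M}\neq\emptyset$ (or simply $I_p\in\mathcal{S}_1$) as an explicit hypothesis rather than as something ``implicitly assumed'', because the lemma is false without it. At $X=0$ the claim reads $-I_p\in\mathcal{S}_1$. Let $e_p$ be the last standard basis vector of $\mathbb{R}^p$, and take $\mathcal{F}=\{X\in\mathbb{R}^{n\times p}: Xe_p=0\}$ with $\phi$ and $\psi$ the identity maps. Assumption~\ref{ass_1} is readily checked: every $T\in\mathcal{G}$ satisfies $Te_p=0$, so $XTe_p=0$. Every element of $\mathcal{G}$ also has zero last row, so every element of $\mathcal{S}_1$ has zero $(p,p)$ entry. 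Hence $C(0)=-I_p\notin\mathcal{S}_1$, and here $\mathcal{M}=\emptyset$. In particular, $I_p\in\mathcal{G}$ does not follow from Assumption~\ref{ass_1}, so the second route cannot be completed. Adding the hypothesis costs nothing, since the lemma is only needed for the submanifold result, which concerns points of $\mathcal{M}$.
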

\begin{proof}
  For any $ X \in \mathcal{F} $, there exists $ T = \frac{1}{2}\left( (\phi(X))^{\top}X \right) \in \mathcal{G} $ such that
  \begin{equation*}
    T^{\top} + \psi(T) = X^{\top}\phi(X) = C(X).
  \end{equation*}
  Therefore, we conclude that $ C(X) \in \mathcal{S}_1 $.
\end{proof}


\begin{proposition}
  The set $ \mathcal{M} $ is a closed embedded submanifold of $ \mathcal{F} $ with the dimension $ \operatorname{dim}(\mathcal{M}) = \operatorname{dim}(\mathcal{F}) + \operatorname{dim}(\mathcal{S}_2) - \operatorname{dim}(\mathcal{G})  $.
\end{proposition}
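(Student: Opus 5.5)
We view $\mathcal{M}$ as the zero set of the constraint map $C$ from \eqref{eq:C}, regarded as a map $C:\mathcal{F}\to\mathcal{S}_1$, and apply the regular value (submersion) theorem. The preceding lemma gives $C(X)\in\mathcal{S}_1$ for every $X\in\mathcal{F}$, so $C$ is a smooth (quadratic) map between the finite-dimensional spaces $\mathcal{F}$ and $\mathcal{S}_1$. Closedness of $\mathcal{M}=C^{-1}(0)$ is immediate from continuity of $C$. Since $\Phi:\mathcal{G}\to\mathcal{S}_1$ is onto with kernel $\mathcal{S}_2$, rank--nullity gives $\dim(\mathcal{S}_1)=\dim(\mathcal{G})-\dim(\mathcal{S}_2)$. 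It therefore suffices to show that ${\rm D}C(X):\mathcal{F}\to\mathcal{S}_1$ is surjective at every $X\in\mathcal{M}$. The regular value theorem then yields an embedded submanifold of dimension $\dim(\mathcal{F})-\dim(\mathcal{S}_1)=\dim(\mathcal{F})+\dim(\mathcal{S}_2)-\dim(\mathcal{G})$.

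For surjectivity, fix $X\in\mathcal{M}$ and an arbitrary element $\Phi(T)\in\mathcal{S}_1$ with $T\in\mathcal{G}$. We look for $Z$ of the form $Z=XS$ with $S\in\mathcal{G}$; Assumption \ref{ass_1}(1) guarantees $Z\in\mathcal{F}$. By Lemma \ref{le1} and Assumption \ref{ass_1}(3),
\begin{equation*}
{\rm D}C(X)[XS] = X^\top\phi(X)\psi(S) + S^\top X^\top\phi(X) = \psi(S)+S^\top = \Phi(S),
\end{equation*}
using $X^\top\phi(X)=I_p$. Taking $S=T$ gives ${\rm D}C(X)[XT]=\Phi(T)$, so ${\rm D}C(X)$ maps onto $\mathcal{S}_1$.

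A few checks are needed along the way. First, $\Phi$ must be well defined on $\mathcal{G}$, i.e.\ $T^\top\in\mathcal{G}$; this holds because $\mathcal{G}$ is spanned by matrices $X_1^\top X_2$, whose transposes $X_2^\top X_1$ are again of that form. Second, $\psi(T)\in\mathcal{G}$, which is part of Assumption \ref{ass_1}(3). Third, the codomain of $C$ and of ${\rm D}C(X)$ really is $\mathcal{S}_1$; for ${\rm D}C(X)$ this follows because ${\rm D}C(X)[Z]$ is the derivative of a curve lying in $\mathcal{S}_1$.

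The main subtlety is the domain and codomain bookkeeping: we must treat $C$ as a map into the linear space $\mathcal{S}_1$ rather than into $\mathbb{R}^{p\times p}$, where it would fail to be a submersion. Once this is set up, surjectivity is essentially the one-line computation above, and the rest is a direct application of the regular value theorem.
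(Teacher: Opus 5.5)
Your proposal is correct and follows essentially the same route as the paper: closedness from continuity of $C$, surjectivity of ${\rm D}C(X)$ onto $\mathcal{S}_1$ via the choice $Z = XT$ together with Assumption~\ref{ass_1}(3) and $X^\top\phi(X)=I_p$, rank--nullity for $\dim(\mathcal{S}_1)$, and the submersion theorem. Your explicit treatment of $C$ as a map into $\mathcal{S}_1$ is more careful than the paper's. Note, however, that the lemma $C(X)\in\mathcal{S}_1$ you invoke tacitly needs $I_p\in\mathcal{S}_1$. This holds whenever $\mathcal{M}\neq\emptyset$, since $I_p=\Phi\bigl(\tfrac12\phi(X)^\top X\bigr)$ for any $X\in\mathcal{M}$.
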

\begin{proof}
  Consider the map $ C(X) = X^{\top}\phi(X) - I $. Combining the continuity of the map $ C $ and $ \mathcal{M} = C^{-1}(0) $, one obtains that $ \mathcal{M} $ is a closed set of $ \mathcal{F} $.

  Let $ X \in \mathcal{M} $. We then prove that $ {\rm D}C(X) $ is a surjection, i.e., for all $ \bar{Z} \in \{ T^{\top} + \psi(T) : T \in \mathcal{G} \} $, there exists $ Z \in \mathcal{F} $ such that $ {\rm D}C(X)[Z] = \bar{Z} $. Let $ \bar{Z} = \bar{T}^{\top} + \psi(\bar{T}) $. Constructing $ Z = X\bar{T} $, one obtains that
  \begin{equation*}
    {\rm D}C(X)[Z] = X^{\top}\phi(X)\psi(\bar{T}) + \bar{T}^{\top}X^{\top}\phi(X) = \bar{Z}.
  \end{equation*}
  Therefore $ C $ is full rank. Furthermore, $ \operatorname{rank}(C)(X) = \operatorname{dim}(\mathcal{S}_1) = \operatorname{dim}(\mathcal{G}) - \operatorname{dim}(\mathcal{S}_2) $ By the submersion theorem \cite{absil2008optimization}, we conclude that $ \mathcal{M} $ is a closed embedded submanifold of $ \mathcal{F} $. Its dimension is $ \operatorname{dim}(\mathcal{M}) = \operatorname{dim}\left(C^{-1}(0)\right) = \operatorname{dim}(\mathcal{F}) + \operatorname{dim}(\mathcal{S}_2) - \operatorname{dim}(\mathcal{G}) $.
\end{proof}

\begin{proposition}
  Given $ X \in \mathcal{M} $, the tangent space of $ \mathcal{M} $ at $ X $ admits the following expressions
  \begin{align}
    T_X\mathcal{M} = & \left\{ Z \in \mathcal{F} : X^{\top}\phi(Z) + Z^{\top}\phi(X) = 0 \right\} \label{tx1}\\
    = & \left\{ Z \in \mathcal{F} : \left\langle Z, \phi(X)(T^{\top} + \psi(T)) \right\rangle = 0 , \forall T \in \mathcal{G} \right\}.
  \end{align}
\end{proposition}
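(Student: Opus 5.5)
The first expression \eqref{tx1} comes directly from the submersion structure established in the preceding proposition, and the second is obtained from the first by an adjoint computation based on Lemma \ref{le1}.

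\textbf{Step 1: the first expression.} The preceding proposition showed that $\mathcal{M}=C^{-1}(0)$ with ${\rm D}C(X)$ surjective onto $\mathcal{S}_1$ for every $X\in\mathcal{M}$. By the standard result for level sets of submersions \cite{absil2008optimization}, $T_X\mathcal{M}=\ker {\rm D}C(X)$. Here $C$ is regarded as a map from $\mathcal{F}$ into $\mathcal{S}_1$, which is legitimate because $C(X)\in\mathcal{S}_1$ for all $X\in\mathcal{F}$. Substituting \eqref{d1} gives exactly \eqref{tx1}. To be safe, I will also check the inclusion $T_X\mathcal{M}\subseteq\ker{\rm D}C(X)$ directly: differentiating $C(\gamma(t))\equiv 0$ along a smooth curve $\gamma$ in $\mathcal{M}$ gives it. The dimension count then forces equality, since both spaces have dimension $\operatorname{dim}(\mathcal{F})-\operatorname{dim}(\mathcal{S}_1)$.

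\textbf{Step 2: the second expression.} I will show $\ker{\rm D}C(X)=(\operatorname{range}{\rm D}C(X)^*)^{\perp}$ inside $\mathcal{F}$, with the adjoint taken with respect to the Frobenius inner products on $\mathcal{F}$ and $\mathcal{G}$. The codomain ${\rm D}C(X)[Z]=X^{\top}\phi(Z)+Z^{\top}\phi(X)$ lies in $\mathcal{G}$, because $\phi(Z)\in\mathcal{F}$ and $\mathcal{G}$ is closed under transposition. Hence ${\rm D}C(X)[Z]=0$ holds if and only if $\langle {\rm D}C(X)[Z],T\rangle=0$ for all $T\in\mathcal{G}$. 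This in turn is equivalent to $\langle Z,{\rm D}C(X)^*[T]\rangle=0$ for all $T\in\mathcal{G}$. Substituting \eqref{d2}, ${\rm D}C(X)^*[T]=\phi(X)\Phi(T)=\phi(X)(T^{\top}+\psi(T))$, yields the second expression.

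\textbf{Main obstacle.} The delicate point is Step 2: testing only against $T\in\mathcal{G}$ must suffice. This needs ${\rm D}C(X)[Z]\in\mathcal{G}$, which I will verify directly. Indeed $X^{\top}\phi(Z)$ is of the form $X_1^{\top}X_2$ with $X_1,X_2\in\mathcal{F}$. Also $Z^{\top}\phi(X)=(\phi(X)^{\top}Z)^{\top}$, and $\mathcal{G}$ is closed under transposition because $(X_1^{\top}X_2)^{\top}=X_2^{\top}X_1$. Finally, $\phi(X)\Phi(T)\in\mathcal{F}$ by Assumption \ref{ass_1}(1), so the test vectors lie in $\mathcal{F}$ as required. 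Once this is in place, the rest follows directly from Lemma \ref{le1}.
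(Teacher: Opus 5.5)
The paper states this proposition without proof, but your argument is the intended one and it is correct. You identify $T_X\mathcal{M}=\ker {\rm D}C(X)$ from the submersion structure of the preceding proposition, and then use the adjoint formula \eqref{d2} to turn the kernel into the orthogonal complement of $\{\phi(X)\Phi(T): T\in\mathcal{G}\}$. Your checks that ${\rm D}C(X)[Z]\in\mathcal{G}$ and $\phi(X)\Phi(T)\in\mathcal{F}$ are exactly what make that step rigorous.

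One small point: regarding $C$ as a map into $\mathcal{S}_1$ also requires $I_p\in\mathcal{S}_1$, which the paper's lemma glosses over. It holds for your $X\in\mathcal{M}$, since $I_p=X^{\top}\phi(X)=\Phi(\tfrac{1}{2}\phi(X)^{\top}X)$.
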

%

\begin{proposition}
  Given $ X \in \mathcal{M} $, the normal space of $ \mathcal{M} $ at $ X $ admits the following expression
  \begin{equation*}
    N_X\mathcal{M} = \left\{ \phi(X)(T^{\top} + \psi(T)): T \in \mathcal{G} \right\}.
  \end{equation*}
\end{proposition}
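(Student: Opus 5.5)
The plan is to identify $N_X\mathcal{M}$ with the range of the adjoint of ${\rm D}C(X)$. Since $\mathcal{M}=C^{-1}(0)$ is an embedded submanifold of the Euclidean space $\mathcal{F}$, the normal space is the orthogonal complement of $T_X\mathcal{M}$ in $\mathcal{F}$. By the previous proposition, $T_X\mathcal{M}=\ker {\rm D}C(X)$, where ${\rm D}C(X)$ is viewed as a linear map $\mathcal{F}\to\mathcal{G}$. Standard linear algebra then gives
\begin{equation*}
N_X\mathcal{M} = (\ker {\rm D}C(X))^{\perp} = \operatorname{range}\,{\rm D}C(X)^*.
\end{equation*}
Here the orthogonal complement is taken inside $\mathcal{F}$ with the Frobenius inner product, and the adjoint is computed with respect to the inner products on $\mathcal{F}$ and on $\mathcal{G}$. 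By Lemma \ref{le1}, ${\rm D}C(X)^*[T]=\phi(X)\Phi(T)=\phi(X)(T^\top+\psi(T))$. Letting $T$ range over $\mathcal{G}$ yields exactly the claimed set.

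To make this self-contained, I will verify the two inclusions directly.

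\emph{First inclusion.} For $W=\phi(X)(T^\top+\psi(T))$ with $T\in\mathcal{G}$, I need $W\in\mathcal{F}$. This holds because $T^\top+\psi(T)\in\mathcal{G}$: indeed $\mathcal{G}$ is closed under transposition, and $\psi$ maps $\mathcal{G}$ into $\mathcal{G}$. Since $\phi(X)\in\mathcal{F}$, Assumption \ref{ass_1}(1) then gives $W\in\mathcal{F}$. Moreover $W$ is orthogonal to every $Z\in T_X\mathcal{M}$, by the second characterization in the tangent-space proposition, so $W\in N_X\mathcal{M}$.

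\emph{Second inclusion.} I will use a dimension count rather than compute complements explicitly. The map $T\mapsto \phi(X)\Phi(T)$ has range of dimension equal to $\operatorname{rank}{\rm D}C(X)^*=\operatorname{rank}{\rm D}C(X)$. By the surjectivity argument in the submanifold proposition, this rank equals $\dim\mathcal{S}_1=\dim\mathcal{G}-\dim\mathcal{S}_2$. On the other hand,
\begin{equation*}
\dim N_X\mathcal{M}=\dim\mathcal{F}-\dim\mathcal{M}=\dim\mathcal{G}-\dim\mathcal{S}_2 .
\end{equation*}
The two dimensions coincide, so the inclusion from the first step is an equality.

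The main obstacle is making the adjoint and rank statements rigorous, because the codomain of ${\rm D}C(X)$ is $\mathcal{S}_1\subseteq\mathcal{G}$ rather than all of $\mathcal{G}$. Two points need care. First, range and adjoint rank agree: this holds for any linear map between finite-dimensional inner product spaces, once I fix $\mathcal{G}$ with the Frobenius inner product as the codomain. Second, $\Phi$ has nontrivial kernel $\mathcal{S}_2$, so distinct $T$ can give the same element. This does not affect the range, which is all the statement concerns.

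If Lemma \ref{le1}'s adjoint formula is to be trusted only through the pairing $\langle {\rm D}C(X)[Z],T\rangle=\langle Z,\phi(X)\Phi(T)\rangle$ for $T\in\mathcal{G}$, that pairing is precisely what the argument uses. It follows from the self-adjointness of $\phi$ together with Assumption \ref{ass_1}(3).
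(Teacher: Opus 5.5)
Your proof is correct, and it is the argument the paper leaves implicit. The paper states this proposition without proof, right after describing $T_X\mathcal{M}$ as the orthogonal complement in $\mathcal{F}$ of $\{\phi(X)(T^{\top}+\psi(T)) : T\in\mathcal{G}\}$; taking complements once more, which is your identity $(\ker {\rm D}C(X))^{\perp}=\operatorname{range}{\rm D}C(X)^*$ combined with Lemma \ref{le1}, gives the claim.

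Your dimension count is redundant once you have that identity. If you keep it, use $\dim N_X\mathcal{M}=\operatorname{rank}{\rm D}C(X)$ directly. The surjectivity argument by itself only shows $\operatorname{range}{\rm D}C(X)\supseteq\mathcal{S}_1$, so it does not give $\operatorname{rank}{\rm D}C(X)=\dim\mathcal{S}_1$ without a further step.
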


\begin{proposition}
  For any $ D \in \mathcal{F} $, the projection from $ D $ to $ T_X\mathcal{M} $ can be expressed as
  \begin{equation}\label{pro}
    \mathcal{P}_{T_X\mathcal{M}}(D) = D - \phi(X)\Theta_X(D),
  \end{equation}
  where $ \Theta_X(D) $ is the solution for the following least square problem
  \begin{equation}\label{eq:les}
    \Theta_X(D) = \mathop{\arg\min}_{S \in \mathcal{S}_1} \| \phi(X)S - D \|^2.
  \end{equation}
\end{proposition}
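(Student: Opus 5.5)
The plan is to use the orthogonal decomposition $\mathcal{F} = T_X\mathcal{M} \oplus N_X\mathcal{M}$ together with the description of the normal space from the previous proposition. By that proposition, $N_X\mathcal{M} = \{\phi(X)S : S \in \mathcal{S}_1\}$, since $\mathcal{S}_1 = \{T^\top + \psi(T) : T\in\mathcal{G}\}$. Every $D\in\mathcal{F}$ therefore splits uniquely as $D = \mathcal{P}_{T_X\mathcal{M}}(D) + \mathcal{P}_{N_X\mathcal{M}}(D)$. The orthogonal projection onto the linear subspace $N_X\mathcal{M}$ is the closest point $\phi(X)S^\ast$ with $S^\ast \in \arg\min_{S\in\mathcal{S}_1}\|\phi(X)S - D\|^2$. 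Substituting gives $\mathcal{P}_{T_X\mathcal{M}}(D) = D - \phi(X)\Theta_X(D)$, which is exactly \eqref{pro}.

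The key steps, in order, are as follows.

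First, I would check that $T_X\mathcal{M}$ and $N_X\mathcal{M}$ are orthogonal complements in $\mathcal{F}$. Orthogonality is the second characterization in \eqref{tx1}: $Z\in T_X\mathcal{M}$ if and only if $\langle Z, \phi(X)S\rangle = 0$ for all $S\in\mathcal{S}_1$. Note that $\phi(X)S\in\mathcal{F}$ by Assumption \ref{ass_1}(1), since $\mathcal{S}_1\subseteq\mathcal{G}$ (because $\psi$ maps $\mathcal{G}$ into $\mathcal{G}$, and $\mathcal{G}$ is closed under transpose). For complementarity, I would count dimensions. The preceding proposition gives $\dim T_X\mathcal{M} = \dim\mathcal{F} - \dim\mathcal{S}_1$. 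It then suffices that $S\mapsto\phi(X)S$ is injective on $\mathcal{S}_1$, so that $\dim N_X\mathcal{M} = \dim \mathcal{S}_1$.

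Second, I would prove that injectivity. If $\phi(X)S = 0$ with $S = T^\top + \psi(T)$, then by Lemma \ref{le1}, ${\rm D}C(X)^*[T] = 0$. Surjectivity of ${\rm D}C(X)$ onto $\mathcal{S}_1$, established in the proof of the submanifold proposition, gives $\ker {\rm D}C(X)^* \perp \mathcal{S}_1$ within $\mathcal{G}$. One must be careful here: $T$ itself need not lie in $\mathcal{S}_1$. I would instead argue directly. Multiplying on the left by $X^\top$ gives $X^\top\phi(X)S = S = 0$, using $X\in\mathcal{M}$. This is clean and sidesteps the adjoint argument entirely.

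Third, with injectivity in hand, the least squares problem \eqref{eq:les} is a strictly convex quadratic over the subspace $\mathcal{S}_1$. It therefore has a unique minimizer, so $\Theta_X(D)$ is well defined. Its first-order optimality condition reads $\langle \phi(X)S^\ast - D, \phi(X)S\rangle = 0$ for all $S\in\mathcal{S}_1$. This says precisely that $D - \phi(X)\Theta_X(D)$ is orthogonal to $N_X\mathcal{M}$. By the first step, it then lies in $T_X\mathcal{M}$, and \eqref{pro} follows from uniqueness of the orthogonal decomposition.

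The main obstacle is the bookkeeping in the first step: confirming that $N_X\mathcal{M}$ as described lies inside $\mathcal{F}$ and exactly fills the orthogonal complement. The injectivity trick $X^\top\phi(X)=I_p$ resolves this, so I expect the argument to be short once it is in place.
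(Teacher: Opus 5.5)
Your proof is correct. The paper states this proposition \emph{without proof}. It implicitly relies on the preceding normal-space characterization plus the fact that orthogonal projection onto a linear subspace is a least-squares residual, and your Steps~1 and~3 spell out exactly that argument.

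\textbf{The dimension count in Step~1 can be dropped.} The second description of $T_X\mathcal{M}$ in the tangent-space proposition already says that $T_X\mathcal{M}$ is the orthogonal complement, inside $\mathcal{F}$, of $W=\{\phi(X)S : S\in\mathcal{S}_1\}\subseteq\mathcal{F}$. In finite dimensions this alone gives $\mathcal{F}=T_X\mathcal{M}\oplus W$, with no counting. The count also leans on the rank claim in the submanifold proposition. That proof only exhibits preimages of elements of $\mathcal{S}_1$ and leaves $\operatorname{range}{\rm D}C(X)\subseteq\mathcal{S}_1$ implicit. The inclusion does hold, because $X^\top\phi(X)\in\mathcal{S}_1$ for every $X\in\mathcal{F}$, but it is an avoidable dependency.

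\textbf{Your left-inverse observation is a genuine addition, not bookkeeping.} The observation is $\phi(X)S=0\Rightarrow S=X^\top\phi(X)S=0$. The projection formula itself does not need it: $\phi(X)S^\ast$ is the unique nearest point of $W$ to $D$ whichever minimizer $S^\ast$ is chosen. What it does supply is well-posedness of ``the solution'' $\Theta_X(D)$, which the paper takes for granted. The same identity also justifies the step $\Theta_Y(\nabla f(Y))=Y^\top\nabla f(Y)$ at stationary points, which the paper uses later in the second-order theorem without comment. Switching from the adjoint-kernel route to this one-line argument in Step~2 was the right call.
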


\begin{proposition}\label{pro:rg}
  The Riemannian gradient of $ f $ at $ X \in \mathcal{M} $ can be expressed as
  \begin{equation}\label{eq:rg}
    \operatorname{grad}f(X) = \nabla f(X) - \phi(X)\Theta_X(\nabla f(X)).
  \end{equation}
  Moreover, the Riemannian Hessian of $ f $ at $ X \in \mathcal{M} $ admits the following expression,
  \begin{equation}
    \operatorname{hess}f(X)[Z] = \mathcal{P}_{T_X\mathcal{M}} \left( \nabla^2 f(X)[Z] - \phi(Z)\Theta_X(\nabla f(X))\right).
  \end{equation}
\end{proposition}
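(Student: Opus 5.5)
The Riemannian gradient formula follows directly from the projection result. The Hessian formula comes from differentiating a smooth extension of the gradient field and projecting back onto the tangent space.

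\textbf{Step 1: the gradient.} Since the metric on $\mathcal{M}$ is inherited from the Euclidean inner product on $\mathcal{F}$, the Riemannian gradient of $f$ at $X\in\mathcal{M}$ is $\mathcal{P}_{T_X\mathcal{M}}(\nabla f(X))$. Applying the projection formula \eqref{pro} with $D=\nabla f(X)$ gives \eqref{eq:rg} at once.

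\textbf{Step 2: a smooth extension of the gradient field.} For the Hessian, I use the standard embedded-submanifold identity $\operatorname{hess}f(X)[Z]=\mathcal{P}_{T_X\mathcal{M}}(\mathrm{D}\bar G(X)[Z])$ for $Z\in T_X\mathcal{M}$, where $\bar G$ is any smooth extension of $\operatorname{grad}f$ to a neighborhood of $\mathcal{M}$ in $\mathcal{F}$. I take $\bar G(Y)=\nabla f(Y)-\phi(Y)\Theta_Y(\nabla f(Y))$.

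The main obstacle is that $\Theta_Y$ must be smooth in $Y$ near $X$. I would argue this as follows. The least squares problem \eqref{eq:les} has a unique solution whenever $S\mapsto\phi(Y)S$ is injective on $\mathcal{S}_1$. At $Y=X\in\mathcal{M}$ this injectivity holds: since $\phi$ is self-adjoint, $\phi(X)S=0$ implies $0=X^\top\phi(X)S=S$. The normal equations then define $\Theta_Y(D)$ through the inverse of the Gram operator $S\mapsto\Pi_{\mathcal{S}_1}(\phi(Y)^\top\phi(Y)S)$. This operator depends polynomially on $Y$ and is invertible at $X$, so it stays invertible nearby, and $\Theta_Y(D)$ is smooth (indeed rational) in $(Y,D)$ and linear in $D$.

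\textbf{Step 3: differentiating the extension.} Differentiating $\bar G$ at $X$ in direction $Z$ gives
\[
\nabla^2 f(X)[Z]-\phi(Z)\Theta_X(\nabla f(X))-\phi(X)\,\mathrm{D}_Y\!\left[\Theta_Y(\nabla f(Y))\right](X)[Z].
\]
The last term has the form $\phi(X)S'$ with $S'=\mathrm{D}_Y[\Theta_Y(\nabla f(Y))](X)[Z]$. Since $\Theta$ takes values in the fixed linear space $\mathcal{S}_1$, we have $S'\in\mathcal{S}_1$. By the normal space proposition, $\phi(X)S'$ lies in $N_X\mathcal{M}$, so $\mathcal{P}_{T_X\mathcal{M}}$ annihilates it. Applying the projection therefore yields exactly the stated Hessian formula.

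\textbf{Check and alternative.} I would confirm that this identity is the usual one, i.e.\ Weingarten/Levi-Civita for the induced metric, $\nabla_Z\operatorname{grad}f=\mathcal{P}_{T_X\mathcal{M}}(\mathrm{D}\bar G(X)[Z])$, with no extra curvature term. This holds for Riemannian submanifolds of a Euclidean space, as in \cite{absil2008optimization}. If the smoothness of $\Theta_Y$ off the manifold proves awkward to state cleanly, an alternative is to differentiate along a smooth curve in $\mathcal{M}$ through $X$ with velocity $Z$. This only needs $\Theta$ on $\mathcal{M}$, where the injectivity shown in Step 2 holds at every point.
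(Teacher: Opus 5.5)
Your proposal is correct and follows the paper's proof: the gradient is the projection \eqref{pro} applied to $\nabla f(X)$. For the Hessian you differentiate $Y\mapsto\nabla f(Y)-\phi(Y)\Theta_Y(\nabla f(Y))$ and drop the term $\phi(X)S'$, since $S'\in\mathcal{S}_1$ gives $\phi(X)S'\in N_X\mathcal{M}$, exactly as the paper does. Your Step 2 also justifies that $\Theta_Y$ is well defined and smooth off $\mathcal{M}$, which the paper uses implicitly when it writes $\Theta_{X+tZ}$; note that the injectivity argument there needs only $X^\top\phi(X)=I_p$, not self-adjointness of $\phi$.
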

\begin{proof}
  By substituting $ \nabla f(X) $ in to \eqref{pro}, we get
  \begin{equation*}
    \operatorname{grad}f(X) = \nabla f(X) - \phi(X)\Theta_X(\nabla f(X)).
  \end{equation*}
  Then according to \cite{absil2008optimization}, one can obtain that the Riemannian Hessian of $ f $ at a point $ X $ in $ \mathcal{M} $ is the linear mapping $ \operatorname{hess}f(X) $ of $ T_X\mathcal{M} $ into itself defined by
  \begin{align*}
    \operatorname{hess}f(X)[Z]  = & \nabla_Z^{\operatorname{grad}f(X)} \\
    = & \mathcal{P}_{T_X\mathcal{M}}\left({\rm D}\operatorname{grad}f(X)[Z]\right) \\
    = & \mathcal{P}_{T_X\mathcal{M}}\left( {\rm D}(\nabla f(X) - \phi(X)\Theta_X(\nabla f(X)))[Z] \right)\\
    = & \mathcal{P}_{T_X\mathcal{M}}\left( \nabla^2f(X)[Z] - \phi(Z)\Theta_X(\nabla f(X)) - \phi(X){\rm D}\Theta_X(\nabla f(X))[Z]\right),
  \end{align*}
  where $ \nabla $ in the first equality is the Riemannian connection on $ \mathcal{M} $. Notice that
  \begin{equation*}
    {\rm D}\Theta_X(\nabla f(X))[Z] = \lim_{t \rightarrow 0}\frac{\Theta_{X+ tZ}(\nabla f(X+ tZ)) - \Theta_X(\nabla f(X))}{t}.
  \end{equation*}
  Denote $ S_1 = \Theta_{X+ tZ}(\nabla f(X+ tZ)) $ and $ S_2 = \Theta_X(\nabla f(X)) $, it follows that there exist $ T_1 , T_2 \in \mathcal{G} $ such that
  \begin{equation*}
    S_1 - S_2 = (T_1 - T_2)^{\top} + \psi( T_1 - T_2 ) \in \{ T^{\top} + \psi(T) : T \in \mathcal{G} \}.
  \end{equation*}
  There is the conclusion that $ \phi(X){\rm D}\Theta_X(\nabla f(X))[Z] \in N_X\mathcal{M} $. Therefore, the Riemannian Hessian $ \operatorname{hess}f(X) $ can be further simplified as
  \begin{equation*}
    \operatorname{hess}f(X)[Z] = \mathcal{P}_{T_X\mathcal{M}}\left( \nabla^2f(X)[Z] - \phi(Z)\Theta_X(\nabla f(X)) \right).
  \end{equation*}
\end{proof}


%

\subsection{Optimality conditions}
In this subsection, we present the optimality conditions for \ref{ocp} as follows.

\begin{definition}[\cite{absil2008optimization}]
  we call $ X \in \mathcal{M} $ is a first-order stationary point of \ref{ocp} if $ \operatorname{grad}f(X) = 0 $.
\end{definition}

Next, we present the definition of the second-order optimality condition of \ref{ocp}.

\begin{assumption}\label{ass2}
  $ f $ is twice differentiable in $ \mathcal{F} $, i.e., $ \nabla^2 f(X) $ exists at every $ X \in \mathcal{F} $.
\end{assumption}

\begin{definition}[\cite{absil2008optimization}]
  If Assumption \ref{ass2} holds, we call $ X \in \mathcal{M} $ is a second-order stationary point of \ref{ocp} if and only if $ X $ is a first-order stationary point of \ref{ocp}, and for any $  Z \in {\rm{T}}_X\mathcal{M} $, it holds that
  \begin{equation}\label{eq:2or}
    \left\langle  Z, \operatorname{hess}f(X)[Z] \right\rangle \geq 0.
  \end{equation}
\end{definition}

\section{Construction of an exact penalty function}\label{sec:3}
In this section, inspired by the Riemannian constraint dissolving framework \cite{xiao2024dissolving}, we introduce a corresponding constraint dissolving function for the \ref{ocp}. Following the approach proposed in \cite{xiao2024dissolving}, the constraint dissolving method addresses the minimization of smooth optimization problems with equality constraints by reformulating them as unconstrained problems via an exact penalty function
\begin{equation*}
  f(\mathcal{A}(x)) + \frac{\beta}{2}\|c(x)\|^2.
\end{equation*}
where $ \{ x\in \mathbb{R}^n :c(x)=0\} $ denotes the feasible region and $ \mathcal{A} $ represents the constraint dissolving operator. This operator satisfies the following assumptions:
\begin{assumption}[\cite{xiao2024dissolving}]
    \begin{itemize}
      \item[1.] For any $ x \in \{ x\in \mathbb{R}^n :c(x)=0\} $, it holds that $ \mathcal{A}(x) = x $.
      \item[2.] The Jacobian of $ c(\mathcal{A}(x)) $ equals zero for any $ x \in \{ x\in \mathbb{R}^n :c(x)=0\} $.
    \end{itemize}
\end{assumption}
The constrained dissolving operator is the key to constructing the constrained dissolving function, and this section will start with the construction of the constrained dissolving operator.

\subsection{constraint dissolving mapping}
In this subsection, we will construct the constraint dissolving operator as \eqref{eq:A}. In the following, we give a key theorem to show that the operator $ \mathcal{A} $ constructed in this paper is eligible.

\begin{theorem}\label{Th:cdm}
  $ \mathcal{A} $ is a constraint dissolving mapping.
\end{theorem}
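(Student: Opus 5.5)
We must verify the two defining properties of a constraint dissolving operator stated in the assumption of \cite{xiao2024dissolving}: (1) $\mathcal{A}(X)=X$ for every $X\in\mathcal{M}$, and (2) ${\rm D}(C\circ\mathcal{A})(X)=0$ for every $X\in\mathcal{M}$. Before either, we check that $\mathcal{A}$ maps $\mathcal{F}$ into $\mathcal{F}$. Since $\phi(X)^\top X\in\mathcal{G}$, item 1 of Assumption \ref{ass_1} gives $X\phi(X)^\top X\in\mathcal{F}$. Hence $\mathcal{A}(X)$ is a linear combination of elements of $\mathcal{F}$, so it lies in $\mathcal{F}$, and $C\circ\mathcal{A}$ is well defined on $\mathcal{F}$.

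For property (1), take $X\in\mathcal{M}$, so $X^\top\phi(X)=I_p$. Transposing gives $\phi(X)^\top X=I_p$. Therefore
\[
\mathcal{A}(X)=\tfrac32 X-\tfrac12 X(\phi(X)^\top X)=\tfrac32X-\tfrac12X=X .
\]
An alternative route that avoids the transpose uses the $\psi$-relation in item 3 of Assumption \ref{ass_1} together with self-adjointness of $\phi$. The transpose argument is direct, however, since $X^\top\phi(X)$ is a square matrix.

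For property (2), we use Lemma \ref{le3}. Write $K:={\rm D}C(X)[Z]$ and $W:=X^\top\phi(X)$, so that
\[
{\rm D}(C\circ\mathcal{A})(X)[Z]=\tfrac94K-\tfrac32(KW+WK)+\tfrac14(KW^2+WKW+W^2K).
\]
At $X\in\mathcal{M}$ we have $W=I_p$. The expression then collapses to
\[
\left(\tfrac94-\tfrac32\cdot2+\tfrac14\cdot3\right)K=\left(\tfrac94-3+\tfrac34\right)K=0
\]
for every $Z\in\mathcal{F}$. So the Jacobian of $C\circ\mathcal{A}$ vanishes on $\mathcal{M}$.

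The only step needing care is the validity of Lemma \ref{le3} itself. It rests on the identity $C(\mathcal{A}(X))=\tfrac94W-\tfrac32W^2+\tfrac14W^3-I_p$, and this identity must hold for general $X\in\mathcal{F}$, not only on $\mathcal{M}$. To confirm it, expand $\mathcal{A}(X)^\top\phi(\mathcal{A}(X))$ with $\phi(XT)=\phi(X)\psi(T)$ for $T=\phi(X)^\top X$, and use $\psi(\phi(X)^\top X)=X^\top\phi(X)$ to rewrite $\phi(\mathcal{A}(X))=\tfrac32\phi(X)-\tfrac12\phi(X)W$. We also need $\mathcal{A}(X)^\top=\tfrac32X^\top-\tfrac12X^\top\phi(X)X^\top$, that is, $(X\phi(X)^\top X)^\top=W X^\top$, which is immediate. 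Multiplying out gives $(\tfrac32-\tfrac12W)\,W\,(\tfrac32-\tfrac12W)$, which equals the stated cubic. Taking this composite formula as given in the paper, both properties follow, and $\mathcal{A}$ is a constraint dissolving mapping.
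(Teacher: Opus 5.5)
Your proposal is correct and follows the paper's proof: $\mathcal{A}(X)=X$ on $\mathcal{M}$, and Lemma \ref{le3} evaluated at $X^\top\phi(X)=I_p$ gives the coefficient $\frac{9}{4}-3+\frac{3}{4}=0$. Your extra checks are valid and fill in details the paper leaves implicit, namely that $\mathcal{A}$ maps $\mathcal{F}$ into $\mathcal{F}$ (item 1 of Assumption \ref{ass_1}) and the derivation of the composite cubic (item 3). The paper also remarks that $\mathcal{A}$ is locally Lipschitz smooth, which is immediate because $\mathcal{A}$ is a cubic polynomial map for linear $\phi$.
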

\begin{proof}
  The locally Lipschitz smoothness of $ \mathcal{A} $ is guaranteed by the Lipschitz smoothness of the self-adjoint linear mapping $ \phi $. Then, for any $ X \in \mathcal{M} := \left\{ X : C(X) = 0 \right\} $, it follows that $ \mathcal{A}(X) = X $.
  Moreover, according to Lemma \ref{le3}, we have
  \begin{equation*}
    {\rm D}(C \circ \mathcal{A})(X)[Z] = \frac{9}{4}{\rm D}C(X)[Z] - 3{\rm D}C(X)[Z] + \frac{3}{4}{\rm D}C(X)[Z] = 0.
  \end{equation*}
  Therefore, we can conclude that $ \mathcal{A} $ is a constraint dissolving mapping.
\end{proof}

According to \cite{xiao2024dissolving}, the constraint dissolving operator $ \mathcal{A} $ exhibits several desirable properties, including the idempotence of its differential operator and its non-extensiveness. These properties play a crucial role in ensuring the equivalence and stability of the transformed unconstrained optimization problem.

\begin{lemma}\label{le:mi}
  For any given $ X \in \mathcal{M} $ and $ Z, T \in \mathcal{F} $, it holds that
  \begin{equation*}
    {\rm D}\mathcal{A}(X)\left[ {\rm D}\mathcal{A}(X) [Z] \right] = {\rm D}\mathcal{A}(X) [Z],
  \end{equation*}
  and
  \begin{equation*}
    {\rm D}\mathcal{A}(X)^*\left[ {\rm D}\mathcal{A}(X)^* [T] \right] = {\rm D}\mathcal{A}(X)^* [T],
  \end{equation*}
\end{lemma}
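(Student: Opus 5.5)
The plan is to exploit the simplifications that occur at a feasible point. I would show that, for $X \in \mathcal{M}$, ${\rm D}\mathcal{A}(X)$ acts as the identity on the kernel of ${\rm D}C(X)$ and maps every direction into that kernel. Idempotence then follows at once, and the adjoint statement follows by taking adjoints.

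First, fix $X \in \mathcal{M}$, so $X^{\top}\phi(X) = I_p$ and hence also $\phi(X)^{\top}X = (X^{\top}\phi(X))^{\top} = I_p$. Substituting into the formula of Lemma \ref{le2}, the term $Z\phi(X)^{\top}X$ reduces to $Z$. Therefore
\begin{equation*}
  {\rm D}\mathcal{A}(X)[Z] = Z - \frac{1}{2}X\left( \phi(Z)^{\top}X + \phi(X)^{\top}Z \right) = Z - \frac{1}{2}X\left({\rm D}C(X)[Z]\right)^{\top},
\end{equation*}
where the last equality uses \eqref{d1}: the transpose of $X^{\top}\phi(Z)+Z^{\top}\phi(X)$ is exactly $\phi(Z)^{\top}X+\phi(X)^{\top}Z$. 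By Assumption \ref{ass_1}(1) this expression lies in $\mathcal{F}$, since ${\rm D}C(X)[Z]^{\top} \in \mathcal{G}$.

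Second, I would show that ${\rm D}C(X)\left[{\rm D}\mathcal{A}(X)[Z]\right] = 0$ for every $Z \in \mathcal{F}$. By the chain rule, ${\rm D}(C\circ\mathcal{A})(X)[Z] = {\rm D}C(\mathcal{A}(X))\left[{\rm D}\mathcal{A}(X)[Z]\right]$. Since $\mathcal{A}(X) = X$ on $\mathcal{M}$, Theorem \ref{Th:cdm} says the left-hand side vanishes, which gives the claim.

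Third, for any $W$ with ${\rm D}C(X)[W]=0$, the displayed formula gives ${\rm D}\mathcal{A}(X)[W] = W$. Taking $W = {\rm D}\mathcal{A}(X)[Z]$ yields the first identity. For the second identity, write $L = {\rm D}\mathcal{A}(X)$, a linear operator on $\mathcal{F}$ with $L\circ L = L$. Then $L^*\circ L^* = (L\circ L)^* = L^*$, with adjoints taken with respect to the trace inner product on $\mathcal{F}$.

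The only delicate point is the transpose bookkeeping in the first step: recognizing that $\phi(Z)^{\top}X+\phi(X)^{\top}Z$ is precisely $({\rm D}C(X)[Z])^{\top}$, so that the correction term vanishes on $\ker {\rm D}C(X)$. Equivalently, one could verify directly from the definition of $\Phi$ that ${\rm D}C(X)[XT] = \Phi(T)$ when $X^{\top}\phi(X)=I_p$. Everything else is routine.
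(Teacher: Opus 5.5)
Your proof is correct, but it reaches idempotence by a different route than the paper.

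\textbf{The paper's route.} It argues by direct substitution. At $X\in\mathcal{M}$ it writes ${\rm D}\mathcal{A}(X)[Z] = Z - \frac{1}{2}W$ with $W = X\phi(Z)^{\top}X + X\phi(X)^{\top}Z$. It then computes ${\rm D}\mathcal{A}(X)[W] = 0$ by expanding $\phi(W)$ with Assumption~\ref{ass_1}(3). That computation tacitly uses $\psi\bigl(\phi(Z)^{\top}X + \phi(X)^{\top}Z\bigr) = X^{\top}\phi(Z) + Z^{\top}\phi(X)$, which is the polarized form of $\psi(\phi(Y)^{\top}Y) = Y^{\top}\phi(Y)$. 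In short, the paper shows that the correction term lies in $\ker {\rm D}\mathcal{A}(X)$.

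\textbf{Your route.} You show instead that the range of ${\rm D}\mathcal{A}(X)$ lies in $\ker {\rm D}C(X)$, using the chain rule, $\mathcal{A}(X) = X$, and Theorem~\ref{Th:cdm}. You then note that ${\rm D}\mathcal{A}(X)$ fixes $\ker {\rm D}C(X)$, because $W = X({\rm D}C(X)[Z])^{\top}$. The adjoint half is the same in both arguments: $(L\circ L)^{*} = L^{*}\circ L^{*}$ is the paper's inner-product computation in compact form.

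\textbf{Comparison.} Your argument is shorter, and it reuses the constraint-dissolving property already established instead of redoing the $\psi$-algebra. It also gives more: ${\rm D}\mathcal{A}(X)$ is a (generally oblique) projection whose range is exactly $T_X\mathcal{M} = \ker{\rm D}C(X)$. The paper later uses this fact implicitly when it replaces ${\rm D}\mathcal{A}(Y)[Z]$ by $Z$ for tangent $Z$. The paper's version, by contrast, is self-contained algebra that does not pass through Lemma~\ref{le3}.

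\textbf{A caution on your closing remark.} The identity ${\rm D}C(X)[XT] = \Phi(T)$ alone only gives ${\rm D}C(X)\bigl[{\rm D}\mathcal{A}(X)[Z]\bigr] = \frac{1}{2}\bigl(K - \psi(K^{\top})\bigr)$ with $K = {\rm D}C(X)[Z]$. A direct check along those lines would therefore still need the polarized identity $\psi(K^{\top}) = K$. Your main argument does not need it, since Theorem~\ref{Th:cdm} already covers that step.
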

\begin{proof}
  For any given $ X \in \mathcal{M} $ and $ Z \in \mathcal{F} $, we have
  \begin{equation}\label{m1}
    \begin{aligned}
        {\rm D}\mathcal{A}(X)[Z] = & \frac{3}{2}Z - \frac{1}{2}\left( Z\left(\phi(X) \right)^{\top}X + X\left(\phi(Z) \right)^{\top}X + X\left(\phi(X) \right)^{\top}Z \right)\\
         = & Z - \frac{1}{2}\left( X\left(\phi(Z) \right)^{\top}X + X\left(\phi(X) \right)^{\top}Z \right).
    \end{aligned}
  \end{equation}
  Furthermore, under Assumption \ref{ass_1}, it is possible to deduce that
  \begin{equation}\label{m2}
    \begin{aligned}
        & {\rm D}\mathcal{A}(X)\left[X\left(\phi(Z) \right)^{\top}X + X\left(\phi(X) \right)^{\top}Z\right] \\
        = & X\left(\phi(Z) \right)^{\top}X + X\left(\phi(X) \right)^{\top}Z - \frac{1}{2} X\left(\phi(X\left(\phi(Z) \right)^{\top}X + X\left(\phi(X) \right)^{\top}Z) \right)^{\top}X \\
        & - \frac{1}{2} X\left(\phi(X) \right)^{\top}(X\left(\phi(Z) \right)^{\top}X + X\left(\phi(X) \right)^{\top}Z)\\
        = & X\left(\phi(Z) \right)^{\top}X + X\left(\phi(X) \right)^{\top}Z - \frac{1}{2}X\left((\phi(X))^{\top}Z(\phi(X))^{\top} + (\phi(Z))^{\top}X(\phi(X))^{\top} \right)X \\
        & -  \frac{1}{2} X\left(\phi(X) \right)^{\top}(X\left(\phi(Z) \right)^{\top}X + X\left(\phi(X) \right)^{\top}Z)\\
        = & X\left(\phi(Z) \right)^{\top}X + X\left(\phi(X) \right)^{\top}Z - X(\phi(X))^{\top}Z - X\left(\phi(Z) \right)^{\top}X \\
        = & 0.
    \end{aligned}
  \end{equation}
  Combining \eqref{m1} and \eqref{m2}, we complete the proof that
  \begin{equation*}
    \begin{aligned}
        {\rm D}\mathcal{A}(X)\left[ {\rm D}\mathcal{A}(X) [Z] \right] = & {\rm D}\mathcal{A}(X) [Z] - \frac{1}{2}{\rm D}\mathcal{A}(X)\left[X\left(\phi(Z) \right)^{\top}X + X\left(\phi(X) \right)^{\top}Z\right]
        =  {\rm D}\mathcal{A}(X) [Z].
    \end{aligned}
  \end{equation*}
  On the other hand, since $ {\rm D}\mathcal{A}(X)^* $ is a concomitant of ${\rm D}\mathcal{A}(X)$, it follows that
  \begin{align*}
    \left\langle {\rm D}\mathcal{A}(X)^* [T], Z \right\rangle = & \left\langle  T, {\rm D}\mathcal{A}(X)[Z] \right\rangle \\
    = & \left\langle  T, {\rm D}\mathcal{A}(X)\left[ {\rm D}\mathcal{A}(X) [Z] \right] \right\rangle \\
    = & \left\langle {\rm D}\mathcal{A}(X)^*\left[ {\rm D}\mathcal{A}(X)^* [T] \right], Z \right\rangle.
  \end{align*}
  The proof is completed.
\end{proof}


We now present two key results. Their proofs are following with Lemmas 1 and 3 in \cite{xiao2024dissolving} and are therefore omitted.

\begin{lemma}\label{le:cbo}
  For any $ X \in \mathcal{M} $ and $ Y \in \Omega_X $, it holds that
  \begin{equation*}
 \frac{3}{2\alpha_\mathcal{A}L_\phi} \| C(Y) \| \leq \| Y-X \| \leq \frac{2}{\delta_{C,X}} \| C(Y) \| .
  \end{equation*}
\end{lemma}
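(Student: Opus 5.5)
The plan is to compare $C(Y)$ with $C(X)=0$ through an exact second-order expansion. Since $\phi$ is linear, $C$ is a quadratic map. Writing $E:=Y-X\in\mathcal{F}$, a direct expansion gives
\begin{equation*}
  C(Y) = C(X) + X^{\top}\phi(E) + E^{\top}\phi(X) + E^{\top}\phi(E) = {\rm D}C(X)[E] + E^{\top}\phi(E),
\end{equation*}
where the second equality uses $X\in\mathcal{M}$ and \eqref{d1} of Lemma \ref{le1}. By linearity of $\phi$ and the definition of $L_\phi$, we also have $\|\phi(W)\|\le L_\phi\|W\|$ for every $W\in\mathcal{F}$. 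I will use this repeatedly; in particular it gives $\|E^{\top}\phi(E)\|\le L_\phi\|E\|^2$.

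\textbf{Upper bound on $\|Y-X\|$.} I will take the smallest-singular-value property of ${\rm D}C(X)$ exactly as the paper defines it: $\|{\rm D}C(X)[Z]\|\ge \delta_{C,X}\|Z\|$ for all $Z\in\mathcal{F}$. Applying it with $Z=E$ and using the reverse triangle inequality in the expansion above gives
\begin{equation*}
  \|C(Y)\| \ge \delta_{C,X}\|E\| - L_\phi\|E\|^2 .
\end{equation*}
Since $Y\in\Omega_X$, we have $\|E\|\le\varepsilon_X\le \frac{\delta_{C,X}}{4L_\phi}$, so $L_\phi\|E\|^2\le \frac{\delta_{C,X}}{4}\|E\|$. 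This yields $\|C(Y)\|\ge \frac34\delta_{C,X}\|E\|\ge \frac12\delta_{C,X}\|E\|$, which is the right-hand inequality.

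This is the step I regard as the main point of care. The argument relies entirely on the injectivity-type lower bound of ${\rm D}C(X)$ over all of $\mathcal{F}$, as encoded in the paper's definition of $\delta_{C,X}$. I will invoke that definition verbatim and not restrict $E$ to the normal space.

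\textbf{Lower bound on $\|Y-X\|$.} I will use the mean value inequality along the segment $W_t=X+tE$, $t\in[0,1]$. Every $W_t$ lies in $\Omega_X\subset\Delta_X$, because $\varepsilon_X\le\rho_X/2$. This gives
\begin{equation*}
  \|C(Y)\|=\|C(Y)-C(X)\|\le \sup_{t\in[0,1]}\|{\rm D}C(W_t)[E]\| .
\end{equation*}
From \eqref{d1}, for any $W$ we have $\|{\rm D}C(W)[E]\|\le \|W\|\,\|\phi(E)\|+\|E\|\,\|\phi(W)\|\le 2L_\phi\|W\|\,\|E\|$. The remark following Lemma \ref{lem:DA} gives $\|W\|\le\frac13\alpha_{\mathcal{A}}$ for $W\in\Delta_X$. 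Combining these yields $\|C(Y)\|\le \frac{2}{3}\alpha_{\mathcal{A}}L_\phi\|Y-X\|$, which rearranges to $\frac{3}{2\alpha_{\mathcal{A}}L_\phi}\|C(Y)\|\le\|Y-X\|$. Together with the upper bound, this completes the proof.
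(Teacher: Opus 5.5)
Your lower bound is correct. The mean-value estimate $\|{\rm D}C(W)[E]\|\le 2L_\phi\|W\|\,\|E\|$, combined with $\|W\|\le\frac13\alpha_{\mathcal A}$ on $\Delta_X$, gives exactly the constant $\frac{3}{2\alpha_{\mathcal A}L_\phi}$. The upper bound, however, has a genuine gap, and it sits at the step you flagged. By \eqref{tx1}, $\ker{\rm D}C(X)=T_X\mathcal M$. This kernel is nonzero whenever $\dim\mathcal M>0$; for ${\rm St}(p,n)$ it has dimension $np-\frac{p(p+1)}{2}$. Testing the defining inequality $\|{\rm D}C(X)[Z]\|\ge\delta_{C,X}\|Z\|$ on any $0\neq Z\in T_X\mathcal M$ forces $\delta_{C,X}=0$. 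So invoking the definition verbatim makes your bound $\|C(Y)\|\ge\frac34\delta_{C,X}\|E\|$ empty, and it also gives $\varepsilon_X=0$, i.e.\ $\Omega_X=\{X\}$. No other argument can rescue the inequality as printed either: for $Y\in\mathcal M\cap\Omega_X$ with $Y\neq X$ one has $C(Y)=0<\|Y-X\|$. With a positive constant, the right-hand inequality can only hold with $\mathrm{dist}(Y,\mathcal M)$ in place of $\|Y-X\|$. In that version, $\delta_{C,X}$ must be a uniform lower bound, near $X$, on the smallest \emph{nonzero} singular value of ${\rm D}C$, i.e.\ on its restriction to the normal space. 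The paper omits its own proof and defers to Lemma 1 of \cite{xiao2024dissolving}, whose argument works with the distance to the manifold and the nearest point on it.

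The missing idea is to expand around the nearest point of $\mathcal M$ rather than around $X$. Let $\bar Y$ be a closest point of $\mathcal M$ to $Y$; it exists because $\mathcal M$ is closed. Since $X\in\mathcal M$, we have $\|\bar Y-X\|\le\|\bar Y-Y\|+\|Y-X\|\le 2\varepsilon_X\le\rho_X$, so $\bar Y\in\Delta_X$. First-order optimality of $\bar Y$ gives $Y-\bar Y\in N_{\bar Y}\mathcal M=(\ker{\rm D}C(\bar Y))^{\perp}$. Your exact quadratic expansion, taken at $\bar Y$, reads $C(Y)={\rm D}C(\bar Y)[Y-\bar Y]+(Y-\bar Y)^{\top}\phi(Y-\bar Y)$. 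Now the singular-value lower bound is needed only for directions in $N_{\bar Y}\mathcal M$. There it is a genuinely positive quantity, because ${\rm D}C(\bar Y)$ is onto $\mathcal S_1$ and is therefore injective on the orthogonal complement of its kernel. Since $\mathrm{dist}(Y,\mathcal M)\le\|Y-X\|\le\frac{\delta_{C,X}}{4L_\phi}$, your own arithmetic then yields $\mathrm{dist}(Y,\mathcal M)\le\frac{4}{3\delta_{C,X}}\|C(Y)\|\le\frac{2}{\delta_{C,X}}\|C(Y)\|$. Running your mean-value argument along the segment $[\bar Y,Y]$, which lies in the convex set $\Delta_X$, gives the companion bound $\frac{3}{2\alpha_{\mathcal A}L_\phi}\|C(Y)\|\le\mathrm{dist}(Y,\mathcal M)$.
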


\begin{theorem}\label{them:noe}
  The operator $ \mathcal{A} $ is non-expansive. For any given $ X \in \mathcal{M} $ and $ Y \in \Omega_X $, it holds that
  \begin{equation*}
    \| C(\mathcal{A}(Y)) \| \leq \frac{8\alpha_{C,\mathcal{A}}L_{\phi}}{\delta_{C,X}^2}\| C(Y) \|^2.
  \end{equation*}
\end{theorem}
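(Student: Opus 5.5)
The plan is a second-order Taylor argument for $C\circ\mathcal{A}$ about the point $X\in\mathcal{M}$, exploiting the fact that the first-order term vanishes there.

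Fix $X\in\mathcal{M}$ and $Y\in\Omega_X$, and set $Z:=Y-X$. Since $\varepsilon_X\le \rho_X/2$, the segment $X+tZ$, $t\in[0,1]$, stays in $\Omega_X\subset\Delta_X$, and it lies in $\mathcal{F}$ because $\mathcal{F}$ is a subspace.

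\textbf{Step 1: remainder bound.} By Theorem \ref{Th:cdm}, $(C\circ\mathcal{A})(X)=C(X)=0$ and ${\rm D}(C\circ\mathcal{A})(X)=0$. The fundamental theorem of calculus then gives
\begin{equation*}
C(\mathcal{A}(Y)) = \int_0^1 \left({\rm D}(C\circ\mathcal{A})(X+tZ)[Z] - {\rm D}(C\circ\mathcal{A})(X)[Z]\right){\rm d}t .
\end{equation*}
The Lipschitz lemma for ${\rm D}(C\circ\mathcal{A})$ on $\Delta_X$ bounds the integrand by $2\alpha_{C,\mathcal{A}}L_\phi t\|Z\|^2$. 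Integrating,
\begin{equation*}
\|C(\mathcal{A}(Y))\|\le \alpha_{C,\mathcal{A}}L_\phi\|Y-X\|^2 .
\end{equation*}

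\textbf{Step 2: convert to $\|C(Y)\|$.} The right inequality of Lemma \ref{le:cbo} gives $\|Y-X\|\le \frac{2}{\delta_{C,X}}\|C(Y)\|$. Substituting,
\begin{equation*}
\|C(\mathcal{A}(Y))\|\le \frac{4\alpha_{C,\mathcal{A}}L_\phi}{\delta_{C,X}^2}\|C(Y)\|^2 ,
\end{equation*}
which is stronger than the claimed constant $8\alpha_{C,\mathcal{A}}L_\phi/\delta_{C,X}^2$ by a factor of two. That slack can absorb a cruder estimate of the integral, or allow a base point $\tilde X\in\mathcal{M}$ other than $X$, as in \cite{xiao2024dissolving}. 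Either way, the stated bound follows.

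\textbf{The non-expansive part.} I would interpret ``non-expansive'' as in \cite{xiao2024dissolving}: the infeasibility is contracted, with $\|C(\mathcal{A}(Y))\|$ quadratic, hence smaller than $\|C(Y)\|$ when $\|C(Y)\|$ is small. This follows from the displayed bound, since the left inequality of Lemma \ref{le:cbo} gives
\begin{equation*}
\|C(Y)\|\le \frac{2\alpha_{\mathcal{A}}L_\phi}{3}\,\varepsilon_X .
\end{equation*}
With the fourth term in the definition of $\varepsilon_X$, the factor $\frac{8\alpha_{C,\mathcal{A}}L_\phi}{\delta_{C,X}^2}\|C(Y)\|$ is at most $\tfrac12$. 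Hence $\|C(\mathcal{A}(Y))\|\le\tfrac12\|C(Y)\|$.

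\textbf{Main obstacle.} The delicate point is justifying the use of the Lipschitz estimate along the whole segment. It requires checking that $X+tZ\in\Delta_X$, which holds since $\varepsilon_X\le\rho_X/2$, and that the Lipschitz constant of ${\rm D}(C\circ\mathcal{A})$ is uniform on $\Delta_X$. The latter relies on boundedness of $\|Y\|$ there, which the preceding lemmas already supply.
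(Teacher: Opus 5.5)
Your Step 1 is correct, but Step 2 fails. It uses the right-hand inequality of Lemma \ref{le:cbo} in the literal form $\|Y-X\|\le\frac{2}{\delta_{C,X}}\|C(Y)\|$, with $X$ the fixed centre of $\Omega_X$. The paper does print the lemma that way, but that inequality is false. For any $Y\in\mathcal{M}\cap\Omega_X$ with $Y\neq X$ (e.g.\ a nearby point of the circle ${\rm St}(1,2)$), the right-hand side vanishes and the left-hand side does not. The error bound in \cite{xiao2024dissolving} that this lemma transcribes controls $\operatorname{dist}(Y,\mathcal{M})$, not the distance to an arbitrary reference point. The reason is structural. ${\rm D}C$ annihilates tangent directions, so a bound $\|{\rm D}C(\cdot)[W]\|\ge\delta_{C,X}\|W\|$ is only available for normal $W$, and $\|C(Y)\|$ cannot see the tangential component of $Y-X$. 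An expansion centred at $X$ therefore leaves you with $\|Y-X\|^2$, which can never be converted into $\|C(Y)\|^2$. You do mention a base point ``other than $X$'', but only as a way to spend the factor-two slack. In fact it is needed for validity, and it costs nothing in the constant.

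The fix is to expand about a nearest point $\bar Y\in\mathcal{M}$ of $Y$ (this is the route of Lemma 3 in \cite{xiao2024dissolving}, to which the paper defers; $\bar Y$ exists because $\mathcal{M}$ is closed).
Since $X\in\mathcal{M}$, $\|\bar Y-X\|\le\|\bar Y-Y\|+\|Y-X\|\le2\|Y-X\|\le2\varepsilon_X\le\rho_X$, so the segment $[\bar Y,Y]$ lies in the ball $\Delta_X$. Theorem \ref{Th:cdm} gives $C(\mathcal{A}(\bar Y))=0$ and ${\rm D}(C\circ\mathcal{A})(\bar Y)=0$. Your integral estimate, run along $[\bar Y,Y]$, then yields $\|C(\mathcal{A}(Y))\|\le\alpha_{C,\mathcal{A}}L_\phi\|W\|^2$ with $W:=Y-\bar Y$.

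To control $\|W\|=\operatorname{dist}(Y,\mathcal{M})$, note two facts. First, $W\in N_{\bar Y}\mathcal{M}$ by first-order optimality of the projection. Second, $C$ is quadratic, so $C(Y)={\rm D}C(\bar Y)[W]+W^\top\phi(W)$. Hence $\|C(Y)\|\ge(\delta_{C,X}-L_\phi\|W\|)\|W\|\ge\frac{\delta_{C,X}}{2}\|W\|$, because $\|W\|\le\|Y-X\|\le\varepsilon_X\le\frac{\delta_{C,X}}{4L_\phi}$. Here $\delta_{C,X}$ must be read as a lower bound for ${\rm D}C$ on normal directions at points of $\Delta_X$. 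This gives the claim with constant $4\alpha_{C,\mathcal{A}}L_\phi/\delta_{C,X}^2\le8\alpha_{C,\mathcal{A}}L_\phi/\delta_{C,X}^2$.

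Your contraction argument for the ``non-expansive'' part is then fine as written. It only uses the left-hand inequality of Lemma \ref{le:cbo}, which does hold with $\|Y-X\|$ because $C(X)=0$ and $C$ is Lipschitz on $\Delta_X$.
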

Theorem \ref{them:noe} demonstrates that the operator $\mathcal{A}$ effectively reduces constraint violations. Specifically, for any point $Y$ sufficiently close to the manifold $\mathcal{M}$, the application of $\mathcal{A}$ pulls $Y$ quadratically closer to $\mathcal{M}$.

\subsection{constraint dissolving function}
In this subsection, we consider the constraint dissolving function according to the operator $ \mathcal{A} $ as follows.
\begin{equation}\tag{GOCDF}\label{cdf}
  h(X) = f(\mathcal{A}(X)) + \frac{\beta}{2}\left\| X^{\top}\phi(X) - I_p \right\|^2.
\end{equation}
Subsequently, we deduce the explicit form of $ \nabla h(X) $ and $ \nabla^2 h(X) $.
\begin{proposition}\label{pro:nah}
  For any $ X \in \mathcal{F} $, the gradient of $ h $ can be expressed as
  \begin{equation}\label{eq:eh}
    \nabla h(X) = \nabla f(\mathcal{A}(X))\left(\frac{3}{2}I_p - \frac{1}{2}X^{\top}\phi(X)\right) - \frac{1}{2}\phi(X)\Phi\left(\nabla f(\mathcal{A}(X))^{\top}X\right) + \beta \phi(X)\Phi\left(X^{\top}\phi(X) - I_p\right),
  \end{equation}
  where $ \Phi(T) = T^{\top} + \psi(T), \forall T \in \mathcal{G} $.
\end{proposition}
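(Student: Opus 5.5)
We compute $\nabla h$ by splitting $h = g + \frac{\beta}{2}\|C\|^2$, where $g(X) = f(\mathcal{A}(X))$, and differentiating each part with the chain rule. Everything needed is already in Lemma \ref{le1} and Lemma \ref{le2}. Gradients are taken with respect to the inner product inherited on $\mathcal{F}$, so each term only has to be identified as an element of $\mathcal{F}$ through its pairing with an arbitrary direction $Z \in \mathcal{F}$.

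\emph{Composite term.} For $Z \in \mathcal{F}$ we have ${\rm D}g(X)[Z] = \langle \nabla f(\mathcal{A}(X)), {\rm D}\mathcal{A}(X)[Z]\rangle = \langle {\rm D}\mathcal{A}(X)^*[G(X)], Z\rangle$, hence $\nabla g(X) = {\rm D}\mathcal{A}(X)^*[G(X)]$. Substituting $T = G(X)$ into the adjoint formula of Lemma \ref{le2} gives
\begin{equation*}
\nabla g(X) = G(X)\left(\tfrac{3}{2}I_p - \tfrac{1}{2}X^{\top}\phi(X)\right) - \tfrac{1}{2}\phi(X)\left(X^{\top}G(X) + \psi(G(X)^{\top}X)\right).
\end{equation*}
Set $T' := G(X)^{\top}X$. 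Then $T'^{\top} = X^{\top}G(X)$, so the bracket equals $T'^{\top} + \psi(T') = \Phi(G(X)^{\top}X)$. This reproduces the first two terms of \eqref{eq:eh}.

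One point needs checking here: $T' \in \mathcal{G}$, so that $\psi$ and $\Phi$ are defined on it. This holds because $G(X) = \nabla f(\mathcal{A}(X))$ is a gradient on $\mathcal{F}$ and therefore lies in $\mathcal{F}$. Both $X, G(X) \in \mathcal{F}$, so $G(X)^{\top}X \in \mathcal{G}$ by the definition of $\mathcal{G}$.

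\emph{Penalty term.} Write $\frac{\beta}{2}\|C(X)\|^2$. Its differential is $\beta\langle C(X), {\rm D}C(X)[Z]\rangle = \beta\langle {\rm D}C(X)^*[C(X)], Z\rangle$. Since $C(X) = X^{\top}\phi(X) - I_p \in \mathcal{G}$, Lemma \ref{le1} applies and gives the gradient $\beta\phi(X)\Phi(X^{\top}\phi(X) - I_p)$. (The membership $I_p \in \mathcal{G}$ follows from the earlier lemma $C(X) \in \mathcal{S}_1 \subseteq \mathcal{G}$, taken together with $X^{\top}\phi(X) \in \mathcal{G}$.) Adding the two gradients yields \eqref{eq:eh}.

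\emph{Main obstacle.} The only delicate step is confirming that the adjoints in Lemmas \ref{le1}--\ref{le2} are the correct adjoints in $\mathcal{F}$, meaning their outputs lie in $\mathcal{F}$ rather than merely in $\mathbb{R}^{n\times p}$. If those lemmas are taken as given, this is already settled. Otherwise I would verify it directly from Assumption \ref{ass_1}:
\begin{itemize}
\item $G(X)M \in \mathcal{F}$ for $M \in \mathcal{G}$, by item 1;
\item $\phi(X)\Phi(\cdot) \in \mathcal{F}$, since $\phi(X) \in \mathcal{F}$ and $\Phi(\cdot) \in \mathcal{G}$, again by item 1.
\end{itemize}
I would also re-derive the adjoint identity $\langle T, X\phi(Z)^{\top}X\rangle = \langle \phi(X)\psi(T^{\top}X), Z\rangle$. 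This uses the self-adjointness of $\phi$ together with the relation $\phi(XT) = \phi(X)\psi(T)$.
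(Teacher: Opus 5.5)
Your proposal is correct and is essentially the paper's argument: $\nabla g(X) = {\rm D}\mathcal{A}(X)^*[\nabla f(\mathcal{A}(X))]$ is evaluated via Lemma \ref{le2}, the bracket is regrouped as $\Phi(\nabla f(\mathcal{A}(X))^{\top}X)$, and the gradient of the penalty term is added. The only difference is cosmetic: you get the penalty gradient by applying the adjoint of Lemma \ref{le1} to $T = C(X)$, whereas the paper expands $\|C\|^2$ directly and obtains $\phi(X)\bigl(\phi(X)^{\top}X + \psi(X^{\top}\phi(X)) - 2I_p\bigr)$, which matches your form only after using $\phi(X)\psi(I_p) = \phi(X)$ (item 3 of Assumption \ref{ass_1} with $T = I_p$), a step your route avoids; separately, $I_p \in \mathcal{G}$ is most directly justified by $I_p = X_0^{\top}\phi(X_0)$ for any $X_0 \in \mathcal{M}$, rather than through the lemma $C(X) \in \mathcal{S}_1$, whose proof in the paper only shows $X^{\top}\phi(X) \in \mathcal{S}_1$.
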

\begin{proof}
  Denote $ g(X) = f(\mathcal{A}(X)) $. First we aim at proving the gradient of $ g $. According to the Taylor expansion of $ g $ , we have
  \begin{align*}
    & g(X + \Delta X) - g(X)\\
    = & \left\langle \nabla f(\mathcal{A}(X)), {\rm D}\mathcal{A}(X)[\Delta X] \right\rangle + \mathcal{O}(\| \Delta X \|^2) \\
    = & \left\langle {\rm D}\mathcal{A}(X)^*[ \nabla f(\mathcal{A}(X))], \Delta X \right\rangle + \mathcal{O}(\| \Delta X \|^2) \\
    = & \left\langle \nabla f(\mathcal{A}(X))\left(\frac{3}{2}I_p - \frac{1}{2}X^{\top}\phi(X)\right) - \frac{1}{2}\phi(X)\left(  X^{\top}\nabla f(\mathcal{A}(X)) + \psi\left(\nabla f(\mathcal{A}(X))^{\top}X\right)  \right), \Delta X \right\rangle + \mathcal{O}(\| \Delta X \|^2),
  \end{align*}
  which illustrates that
  \begin{equation}\label{eq:ng}
    \nabla g(X) = \nabla f(\mathcal{A}(X))\left(\frac{3}{2}I_p - \frac{1}{2}X^{\top}\phi(X)\right) - \frac{1}{2}\phi(X)\left(  X^{\top}\nabla f(\mathcal{A}(X)) + \psi\left(\nabla f(\mathcal{A}(X))^{\top}X\right)  \right).
  \end{equation}

  Moreover, since
  \begin{align*}
     & \left\| (X+\Delta X)^{\top}\phi(X + \Delta X) - I_p \right\|^2 - \left\| X^{\top}\phi(X) - I_p \right\|^2 \\
    = & 2\left\langle \Delta X, \phi(X)(\phi(X))^{\top}X + \phi(X)\psi(X^{\top}\phi(X)) - 2\phi(X)  \right\rangle +  \mathcal{O}(\| \Delta X \|^2),
  \end{align*}
  combined with the fact that $ h(X) = g(X) + \frac{\beta}{2}\left\| X^{\top}\phi(X) - I_p \right\|^2 $, we could conclude that
  \begin{align*}
    \nabla h(X) = & \nabla f(\mathcal{A}(X))\left(\frac{3}{2}I_p - \frac{1}{2}X^{\top}\phi(X)\right) - \frac{1}{2}\phi(X)\left(  X^{\top}\nabla f(\mathcal{A}(X)) + \psi\left(\nabla f(\mathcal{A}(X))^{\top}X\right)  \right) \\
    & + \beta \phi(X)\left((\phi(X))^{\top}X + \psi(X^{\top}\phi(X)) - 2I_p \right).
  \end{align*}
  Denote $ \Phi(T) = T^{\top} + \psi(T), \forall T \in \mathcal{G} $ and complete the proof.
\end{proof}

\begin{proposition}\label{pro:2h}
  If Assumption \ref{ass2} hold, then
  \begin{equation}\label{eq:nabh}
    \nabla^2 h(X)[\Delta X] = \nabla^2 g(X)[\Delta X ] + \beta \left( \phi(\Delta X)\Phi\left( X^{\top}\phi(X) - I_p \right) + \phi(X)\Phi\left(\Delta X^{\top}\phi(X)\right) + \phi(X)\Phi\left(X^{\top}\phi(\Delta X) \right) \right),
  \end{equation}
  where $ g(X) = f(\mathcal{A}(X)) $ and $ \Phi(T) = T^{\top} + \psi(T), \forall T \in \mathcal{G} $.
\end{proposition}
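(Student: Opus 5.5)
The Hessian of $h$ is the directional derivative of the gradient in Proposition \ref{pro:nah}. Since $h = g + \frac{\beta}{2}\|C(X)\|^2$, we have $\nabla^2 h = \nabla^2 g + \beta\,\nabla^2\big(\tfrac12\|C\|^2\big)$. We keep $\nabla^2 g$ as a symbol, as in the statement. It therefore suffices to differentiate the penalty gradient
\begin{equation*}
  P(X) := \phi(X)\Phi\left(X^{\top}\phi(X) - I_p\right)
\end{equation*}
along a direction $\Delta X \in \mathcal{F}$. By Lemma \ref{le1}, $P(X) = {\rm D}C(X)^*[C(X)]$, which confirms that $\beta P$ is the gradient of the penalty term.

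The key facts are that $\phi$ and $\Phi$ are linear: $\phi$ by assumption, and $\Phi(T) = T^{\top} + \psi(T)$ because $\psi$ is linear (Assumption \ref{ass_1}). Hence ${\rm D}\phi(X)[\Delta X] = \phi(\Delta X)$ and ${\rm D}(\Phi\circ C)(X)[\Delta X] = \Phi({\rm D}C(X)[\Delta X])$. Applying the product rule to $P(X) = \phi(X)\,\Phi(C(X))$ gives
\begin{equation*}
  {\rm D}P(X)[\Delta X] = \phi(\Delta X)\Phi\left(C(X)\right) + \phi(X)\Phi\left({\rm D}C(X)[\Delta X]\right).
\end{equation*}
Substituting \eqref{d1}, ${\rm D}C(X)[\Delta X] = X^{\top}\phi(\Delta X) + \Delta X^{\top}\phi(X)$, and splitting by linearity of $\Phi$ produces the two terms $\phi(X)\Phi(\Delta X^{\top}\phi(X))$ and $\phi(X)\Phi(X^{\top}\phi(\Delta X))$. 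Adding $\nabla^2 g(X)[\Delta X]$ then yields \eqref{eq:nabh}.

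Two side checks are needed.

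\textbf{Well-definedness of $\Phi$.} The arguments of $\Phi$ must lie in $\mathcal{G}$. Each of $X^{\top}\phi(X)$, $\Delta X^{\top}\phi(X)$ and $X^{\top}\phi(\Delta X)$ has the form $X_1^{\top}X_2$ with $X_1, X_2 \in \mathcal{F}$, since $\phi$ maps $\mathcal{F}$ into $\mathcal{F}$. For $C(X)$ we additionally need $I_p \in \mathcal{G}$. This holds whenever $\mathcal{M}$ is nonempty, because then $I_p = X^{\top}\phi(X)$ for some $X \in \mathcal{F}$. This step needs care, though no real obstacle.

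\textbf{Twice differentiability of $h$.} Under Assumption \ref{ass2}, $g = f\circ\mathcal{A}$ is twice differentiable because $\mathcal{A}$ is polynomial. The penalty term is polynomial, hence smooth. So $\nabla^2 h$ exists and equals the derivative of $\nabla h$ computed above.

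The only subtle point is keeping the algebra consistent with Proposition \ref{pro:nah}: the form $\phi(X)\Phi(C(X))$ must be differentiated as written, rather than the expanded form $\phi(X)(\phi(X)^{\top}X + \psi(X^{\top}\phi(X)) - 2I_p)$. The two agree, since $\Phi(I_p) = 2I_p$ requires $\psi(I_p) = I_p$, which follows from Assumption \ref{ass_1}(3) at a feasible $X$. Everything else is a routine product-rule computation.
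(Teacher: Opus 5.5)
Your proof is correct and uses the same split as the paper, $h = g + \frac{\beta}{2}\|C\|^2$ with $\nabla^2 g$ kept symbolic, and it produces the identical penalty term. The only difference is in how the penalty Hessian is obtained: the paper reads it off a second-order expansion of $\|(X+\Delta X)^{\top}\phi(X+\Delta X) - I_p\|^2$ (and also writes out $\nabla^2 g$ explicitly, which the statement does not need), whereas you differentiate the gradient ${\rm D}C(X)^*[C(X)] = \phi(X)\Phi(C(X))$ by the product rule, which gives the operator directly rather than only its quadratic form.
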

\begin{proof}
  According to the Taylor expansion of $ f $ , we have
  \begin{align*}
     & f\left(\mathcal{A}(X+\Delta X)\right) - f(\mathcal{A}(X)) \\
    = & f\left( \mathcal{A}(X) + {\rm D}\mathcal{A}(X)[\Delta X] - \frac{1}{2}\left( \Delta X\left(\phi(\Delta X) \right)^{\top}X + \Delta X\left(\phi(X) \right)^{\top}\Delta X + X\left(\phi(\Delta X) \right)^{\top}\Delta X \right) \right)\\
    & - f(\mathcal{A}(X)) + \mathcal{O}(\| \Delta X \|^3)\\
    = & \left\langle \nabla f(\mathcal{A}(X)), {\rm D}\mathcal{A}(X)[\Delta X] - \frac{1}{2}\left( \Delta X\left(\phi(\Delta X) \right)^{\top}X + \Delta X\left(\phi(X) \right)^{\top}\Delta X + X\left(\phi(\Delta X) \right)^{\top}\Delta X \right)  \right\rangle \\
    & + \frac{1}{2}\left\langle \nabla^2 f(\mathcal{A}(X))\left[{\rm D}\mathcal{A}(X)[\Delta X]\right], {\rm D}\mathcal{A}(X)[\Delta X] \right\rangle + \mathcal{O}(\| \Delta X \|^3)\\
    = & \left\langle \Delta X, \nabla g(X) \right\rangle - \frac{1}{2}\left\langle \nabla f(\mathcal{A}(X)), \Delta X\left(\phi(\Delta X) \right)^{\top}X + \Delta X\left(\phi(X) \right)^{\top}\Delta X + X\left(\phi(\Delta X) \right)^{\top}\Delta X \right\rangle \\
    & + \frac{1}{2}\left\langle \nabla^2 f(\mathcal{A}(X))\left[{\rm D}\mathcal{A}(X)[\Delta X]\right], {\rm D}\mathcal{A}(X)[\Delta X] \right\rangle + \mathcal{O}(\| \Delta X \|^3)\\
    = & \left\langle \Delta X, \nabla g(X) \right\rangle - \frac{1}{4}\left\langle \Delta X , \nabla f(\mathcal{A}(X))X^{\top}\phi(\Delta X) + \nabla f(\mathcal{A}(X))\Delta X^{\top}\phi(X) + \phi(\Delta X)X^{\top}\nabla f(\mathcal{A}(X)) \right\rangle \\
    & - \frac{1}{4}\left\langle \Delta X ,\phi\left( X \nabla f(\mathcal{A}(X))^{\top} \Delta X\right) + \phi(X)\Delta X^{\top} \nabla f(\mathcal{A}(X)) + \phi\left(\Delta X \nabla f(\mathcal{A}(X))^{\top} X\right) \right\rangle \\
    & + \frac{1}{2}\left\langle {\rm D}\mathcal{A}(X)^*\left[ \nabla^2 f(\mathcal{A}(X))\left[{\rm D}\mathcal{A}(X)[\Delta X]\right]\right], \Delta X \right\rangle + \mathcal{O}(\| \Delta X \|^3).
  \end{align*}
  Therefore, we conclude that
  \begin{equation*}
    \begin{aligned}
    \nabla^2 g(X)[\Delta X ] = & {\rm D}\mathcal{A}(X)^*\left[ \nabla^2 f(\mathcal{A}(X))\left[{\rm D}\mathcal{A}(X)[\Delta X]\right]\right] + {\rm D}^2 \mathcal{A}(X)^*\left[ \Delta X, \nabla f(\mathcal{A}(X)) \right]\\
     = & {\rm D}\mathcal{A}(X)^*\left[ \nabla^2 f(\mathcal{A}(X))\left[{\rm D}\mathcal{A}(X)[\Delta X]\right]\right] - \frac{1}{2}\nabla f(\mathcal{A}(X))\Phi((\phi(\Delta X))^{\top}X) \\
    & - \frac{1}{2}\phi(X)\Phi(\nabla f(\mathcal{A}(X))^{\top}\Delta X)  - \frac{1}{2}\phi(\Delta X)\Phi(\nabla f(\mathcal{A}(X))^{\top}X),
    \end{aligned}
  \end{equation*}
  where $ \Phi(T) = T^{\top} + \psi(T), \forall T \in \mathcal{G} $.

  Moreover, since
  \begin{align*}
     & \left\| (X+\Delta X)^{\top}\phi(X + \Delta X) - I_p \right\|^2 - \left\| X^{\top}\phi(X) - I_p \right\|^2 \\
    = & 2\left\langle \Delta X, \phi(X)\Phi\left(X^{\top}\phi(X) - I_p\right)  \right\rangle + 2\left\langle \Delta X^{\top}\phi(\Delta X),  X^{\top}\phi(X) - I_p \right\rangle + 2\left\langle \Delta X^{\top}\phi(X),X^{\top}\phi(\Delta X) \right\rangle \\
    & + \left\langle \Delta X^{\top}\phi(X),\Delta X^{\top}\phi(X) \right\rangle + \left\langle X^{\top}\phi(\Delta X),X^{\top}\phi(\Delta X) \right\rangle + \mathcal{O}(\| \Delta X \|^3),
  \end{align*}
  the hessian of $h(X)$ can be expressed as
  \begin{equation*}
    \nabla^2 h(X)[\Delta X] = \nabla^2 g(X)[\Delta X ] + \beta \left( \phi(\Delta X)\Phi\left( X^{\top}\phi(X) - I_p \right) + \phi(X)\Phi\left(\Delta X^{\top}\phi(X)\right) + \phi(X)\Phi\left(X^{\top}\phi(\Delta X) \right) \right).
  \end{equation*}
\end{proof}

\subsection{Basic properties of \ref{cdf}}
In this subsection, we discuss the relationship between \ref{ocp} and \ref{cdf}, particularly the correspondence between their first-order and second-order stationary points. In fact, since the mapping $\mathcal{A}$ has been proven to be an effective constraint dissolving operator in Theorem \ref{Th:cdm}, Proposition 4 and Theorems 1 and 2 in \cite{xiao2024dissolving} imply that, when the penalty parameter $ \beta $ is sufficiently large, \ref{ocp} and corresponding \ref{cdf} share the same first-order and second-order stationary points in a neighborhood of $ \mathcal{M} $. The main objective of this subsection is to derive a penalty parameter threshold corresponding to \ref{cdf}.

The relationship between the first-order stationary points of the \ref{ocp} and the \ref{cdf} on $ \mathcal{M} $ is presented below. The proof closely follows the argument of Proposition 4 in \cite{xiao2024dissolving} and is therefore omitted.

\begin{proposition}\label{pro:sta}
  For any $ X \in \mathcal{M} $, $ X $ is a first-order stationary point of \ref{ocp} if and only if $ X $ is a first-order stationary point of \ref{cdf}.
\end{proposition}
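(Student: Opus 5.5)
For $X\in\mathcal{M}$ we have $C(X)=0$, so the penalty term contributes nothing to the gradient. Indeed, $\beta\phi(X)\Phi(X^{\top}\phi(X)-I_p)=\beta\phi(X)\Phi(0)=0$. By Proposition \ref{pro:nah} and the adjoint formula in Lemma \ref{le2}, this leaves
\begin{equation*}
  \nabla h(X) = {\rm D}\mathcal{A}(X)^*[\nabla f(X)],
\end{equation*}
where we used $\mathcal{A}(X)=X$. Since $X^{\top}\phi(X)=I_p$, this simplifies to $\nabla h(X)=\nabla f(X)-\tfrac12\phi(X)\Phi(\nabla f(X)^{\top}X)$. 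The claim therefore reduces to showing that ${\rm D}\mathcal{A}(X)^*[\nabla f(X)]=0$ if and only if $\operatorname{grad}f(X)=\mathcal{P}_{T_X\mathcal{M}}(\nabla f(X))=0$.

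The key structural fact I will establish is that ${\rm D}\mathcal{A}(X)^*$ annihilates $N_X\mathcal{M}$ and agrees with the identity modulo $N_X\mathcal{M}$. Concretely, I will show three things.
\begin{enumerate}
  \item The range of ${\rm D}\mathcal{A}(X)^*$ meets $N_X\mathcal{M}$ only in $0$.
  \item ${\rm D}\mathcal{A}(X)^*[D]-D\in N_X\mathcal{M}$ for every $D$.
  \item ${\rm D}\mathcal{A}(X)^*[\phi(X)S]=0$ for every $S\in\mathcal{S}_1$.
\end{enumerate}
Item 2 is immediate from the simplified formula, because $\phi(X)\Phi(\cdot)\in N_X\mathcal{M}$ by the normal space proposition. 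Item 3 follows by differentiating the identity $C(\mathcal{A}(Y))=0$ near $\mathcal{M}$; more precisely, it uses ${\rm D}(C\circ\mathcal{A})(X)=0$ from Theorem \ref{Th:cdm}. One form of the check is the direct computation ${\rm D}\mathcal{A}(X)[Z]-Z=-\tfrac12 X(\phi(Z)^{\top}X+\phi(X)^{\top}Z)$, which lies in $X\mathcal{G}$, together with the fact that ${\rm D}\mathcal{A}(X)$ maps into $T_X\mathcal{M}$. The latter holds because ${\rm D}C(X)\circ{\rm D}\mathcal{A}(X)={\rm D}(C\circ\mathcal{A})(X)=0$ when $\mathcal{A}(X)=X$. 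By duality, ${\rm D}\mathcal{A}(X)^*$ vanishes on $(T_X\mathcal{M})^{\perp}=N_X\mathcal{M}$. This gives item 3, and it also says that ${\rm D}\mathcal{A}(X)$ is a projection onto a subspace of $T_X\mathcal{M}$, using Lemma \ref{le:mi}.

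With these facts the equivalence follows quickly. For the direction ($\Rightarrow$), suppose $\nabla h(X)=0$. Item 2 then gives $\nabla f(X)\in N_X\mathcal{M}$, so $\operatorname{grad}f(X)=0$. For the direction ($\Leftarrow$), suppose $\operatorname{grad}f(X)=0$. Then $\nabla f(X)\in N_X\mathcal{M}$, and item 3 gives ${\rm D}\mathcal{A}(X)^*[\nabla f(X)]=0$, that is, $\nabla h(X)=0$.

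The main obstacle is establishing rigorously that ${\rm D}\mathcal{A}(X)$ maps $\mathcal{F}$ into $T_X\mathcal{M}$ and that its adjoint kills every element $\phi(X)\Phi(T)$. This requires correct use of Assumption \ref{ass_1}: the identities $\phi(XT)=\phi(X)\psi(T)$ and $\psi(\phi(X)^{\top}X)=X^{\top}\phi(X)=I_p$, and the self-adjointness of $\phi$. I would first attempt it via the chain rule on $C\circ\mathcal{A}$ together with Lemma \ref{le3}, and fall back on a direct substitution of $T=\phi(X)\Phi(S)$ into the adjoint formula of Lemma \ref{le2} if the duality argument needs care about working inside $\mathcal{F}$ rather than all of $\Rnp$.
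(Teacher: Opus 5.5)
Your proof is correct and follows essentially the argument the paper defers to (Proposition 4 of \cite{xiao2024dissolving}): on $\mathcal{M}$ one has $\nabla h(X)={\rm D}\mathcal{A}(X)^*[\nabla f(X)]$, and ${\rm D}\mathcal{A}(X)$ maps $\mathcal{F}$ into $T_X\mathcal{M}$ by the chain rule and Theorem~\ref{Th:cdm}, so its adjoint kills $N_X\mathcal{M}$; moreover ${\rm D}\mathcal{A}(X)$ fixes $T_X\mathcal{M}$, which is exactly the dual form of your item~2. Item~1 is never used, and you should drop the phrase ``differentiating the identity $C(\mathcal{A}(Y))=0$ near $\mathcal{M}$'', since that identity holds only on $\mathcal{M}$---what you actually use is ${\rm D}(C\circ\mathcal{A})(X)=0$ at $X\in\mathcal{M}$.
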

%

We next introduce the threshold $\tilde{\beta}_X$ for the penalty parameter $ \beta $ associated with the \ref{cdf} formulation, which is defined as follows:
\begin{equation*}
    \tilde{\beta}_X \geq \max  \left\{    \frac{2\alpha_{\mathcal{A}}^3L_\phi^2M_{X,f} + 30 \alpha_{\mathcal{A}}L_\phi M_{X,f} }{3\delta_{C,X}^2}, \frac{16\alpha_\mathcal{A}\alpha_{C,\mathcal{A}} L_\phi M_{X,f}}{3\delta_{C,X}^2},  \frac{18\alpha_\mathcal{A} L_\phi M_{X,f}+3\alpha_\mathcal{A}^2 L_\phi^2 M_{X,f} + 2 \alpha_\mathcal{A}^3 L_\phi^2 M_{X,f} +1}{9\delta_{C,X}^2 - 9\delta_{C,X}^2\alpha_{C,\mathcal{A}}} \right\}.
\end{equation*}



\begin{theorem}\label{the:first}
  Suppose Assumptions \ref{ass_1} and \ref{ass_2} hold, and $ \beta > \tilde{\beta}_X $. Then for any given $X \in \mathcal{M}$, any first-order stationary point of \ref{cdf} in $ \Omega_X $ is a first-order stationary point of \ref{ocp}.
\end{theorem}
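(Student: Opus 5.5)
The plan is to show that any first-order stationary point $Y\in\Omega_X$ of \ref{cdf} must be feasible, i.e. $C(Y)=0$. Once $Y\in\mathcal{M}$, Proposition \ref{pro:sta} shows that $Y$ is a first-order stationary point of \ref{ocp}. I would argue by contradiction: assume $\nabla h(Y)=0$ but $C(Y)\neq 0$. The strategy, following Theorem 1 of \cite{xiao2024dissolving}, is to test the stationarity equation against a direction in the range of ${\rm D}C(Y)^*$. In that direction the penalty term produces a contribution of order $\beta\delta_{C,X}^2\|C(Y)\|^2$, while the $f$-part contributes only $\mathcal{O}(M_{X,f}\|C(Y)\|^2)$.

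Concretely, by Lemma \ref{le1} and Proposition \ref{pro:nah}, $\nabla h(Y)=\nabla g(Y)+\beta\,{\rm D}C(Y)^*[C(Y)]$. I would take the inner product of $0=\nabla h(Y)$ with $W:={\rm D}C(Y)^*[C(Y)]=\phi(Y)\Phi(C(Y))$.

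\textbf{Penalty term.} It gives $\beta\|{\rm D}C(Y)^*[C(Y)]\|^2$. Since $C(Y)\in\mathcal{S}_1$ lies in the range of ${\rm D}C(Y)$, and $\Delta_X\supset\Omega_X$ where the singular value bound $\delta_{C,X}$ holds, this is at least $\beta\delta_{C,X}^2\|C(Y)\|^2$.

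\textbf{Objective term.} It gives $\langle \nabla f(\mathcal{A}(Y)),{\rm D}\mathcal{A}(Y)[W]\rangle$. The key observation is that ${\rm D}\mathcal{A}(X)$ annihilates normal directions at $X\in\mathcal{M}$. Indeed, for $N=\phi(X)S$ with $S\in\mathcal{S}_1$, formula \eqref{m1} together with Assumption \ref{ass_1}(3) gives ${\rm D}\mathcal{A}(X)[N]=N-\frac12 X(\psi(S)^\top+S)=\phi(X)S-XS$. Here $\phi(X)S-XS$ is not zero in general, so I would use the cleaner route instead. Theorem \ref{Th:cdm} gives ${\rm D}(C\circ\mathcal{A})(X)=0$. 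Combined with Lemma \ref{le:mi}, ${\rm D}\mathcal{A}(X)$ is a projection whose range is $T_X\mathcal{M}$. Hence $\langle \nabla f(\mathcal{A}(X)),{\rm D}\mathcal{A}(X)[W]\rangle$ involves only the tangential part of $W$ at $X$. Since $W\approx \phi(X)\Phi(C(Y))\in N_X\mathcal{M}$ up to $\mathcal{O}(L_\phi\|Y-X\|\|C(Y)\|)$, its tangential part is small.

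Then I would write ${\rm D}\mathcal{A}(Y)[W]={\rm D}\mathcal{A}(X)[W]+({\rm D}\mathcal{A}(Y)-{\rm D}\mathcal{A}(X))[W]$. Lemma \ref{lem:DA}, the bound $\|W\|\le c\|C(Y)\|$, and Lemma \ref{le:cbo} (which gives $\|Y-X\|\le \frac{2}{\delta_{C,X}}\|C(Y)\|$) then bound the objective term by $\big(c_1\alpha_{\mathcal{A}}L_\phi+c_2\alpha_{\mathcal{A}}^3L_\phi^2\big)M_{X,f}\delta_{C,X}^{-1}\|C(Y)\|^2$. Here $X$ may be replaced by a nearest manifold point, so that the distance bounds apply. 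Comparing this with the penalty term, $\beta>\tilde\beta_X$ forces $\|C(Y)\|^2\le 0$, which is a contradiction.

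\textbf{Main obstacle.} The delicate step is the sharp estimate of $\langle\nabla f(\mathcal{A}(Y)),{\rm D}\mathcal{A}(Y)[W]\rangle$. It must be shown to be quadratic, not linear, in $\|C(Y)\|$. This requires verifying that ${\rm D}\mathcal{A}(X)$ kills the normal space $N_X\mathcal{M}$, presumably via Lemma \ref{le:mi} and the adjoint identity ${\rm D}\mathcal{A}(X)^*\circ{\rm D}C(X)^*=({\rm D}(C\circ\mathcal{A})(X))^*=0$. It also requires tracking the constants so that they match the expression for $\tilde\beta_X$. The other pieces (the lower bound on the penalty term and the Lipschitz perturbation) are routine given Lemmas \ref{lem:DA} and \ref{le:cbo}.
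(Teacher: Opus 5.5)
\textbf{There is a genuine gap, in the step you flag as the main obstacle.} With the test direction $W={\rm D}C(Y)^*[C(Y)]$, the objective term $\langle\nabla f(\mathcal{A}(Y)),{\rm D}\mathcal{A}(Y)[W]\rangle$ is in general only $\mathcal{O}(M_{X,f}\|C(Y)\|)$, not quadratic in $\|C(Y)\|$.

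Your inference ``${\rm D}\mathcal{A}(X)$ is a projection with range $T_X\mathcal{M}$, hence it only sees the tangential part of $W$'' is false, because ${\rm D}\mathcal{A}(X)$ is an \emph{oblique} projection. From Lemma~\ref{le2}, for $X\in\mathcal{M}$ one has $(I-{\rm D}\mathcal{A}(X))[Z]=\tfrac12 X\bigl(\phi(Z)^\top X+\phi(X)^\top Z\bigr)$. So $\ker{\rm D}\mathcal{A}(X)$ consists of matrices of the form $XS$, whereas $N_X\mathcal{M}$ consists of matrices $\phi(X)S$.

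The identity ${\rm D}\mathcal{A}(X)^*\circ{\rm D}C(X)^*=0$ that you invoke says that the \emph{adjoint} ${\rm D}\mathcal{A}(X)^*$ annihilates $N_X\mathcal{M}$. It says nothing of the kind about ${\rm D}\mathcal{A}(X)$ itself, and your own observation that ${\rm D}\mathcal{A}(X)[\phi(X)S]\neq 0$ already contradicts the ``cleaner route.''

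Here is a concrete instance. Take the generalized Stiefel case $\phi(X)=BX$ with $n=2$, $p=1$, $B=\mathrm{diag}(1,4)$, and $X=(1/\sqrt2,\,1/(2\sqrt2))^\top\in\mathcal{M}$. Then ${\rm D}\mathcal{A}(X)[BX]=BX-\tfrac52 X\neq 0$. Consequently your comparison only gives $\beta\delta_{C,X}^2\|C(Y)\|^2\lesssim M_{X,f}\|C(Y)\|$, i.e.\ $\|C(Y)\|=\mathcal{O}(M_{X,f}/\beta)$, which does not force $C(Y)=0$. Your argument is valid only when ${\rm D}\mathcal{A}(X)$ is self-adjoint, as for the plain Stiefel manifold.

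\textbf{The repair is to put the projection on the adjoint side, which is exactly what the paper does.} Apply $I-{\rm D}\mathcal{A}(Y)^*$ to $\nabla h(Y)$; equivalently, test against $(I-{\rm D}\mathcal{A}(Y))[W]$ instead of $W$.

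The $f$-part becomes ${\rm D}\mathcal{A}(Y)^*[G]-{\rm D}\mathcal{A}(Y)^*\bigl[{\rm D}\mathcal{A}(Y)^*[G]\bigr]$ with $G=\nabla f(\mathcal{A}(Y))$. This is $\mathcal{O}(\alpha_{\mathcal{A}}L_\phi M_{X,f}\|Y-X\|)$ by the idempotence of ${\rm D}\mathcal{A}(X)^*$ (Lemma~\ref{le:mi}) together with Lemma~\ref{lem:DA}.

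The penalty part is $\beta\bigl(W-{\rm D}\mathcal{A}(Y)^*[W]\bigr)$. Its norm is at least $\tfrac{\beta\delta_{C,X}}{2}\|C(Y)\|$, because ${\rm D}\mathcal{A}(X)^*\bigl[{\rm D}C(X)^*[\cdot]\bigr]=0$. This is where the identity you cited actually enters: by the Lipschitz estimates it makes ${\rm D}\mathcal{A}(Y)^*[W]$ of size $\mathcal{O}(\|Y-X\|\,\|C(Y)\|)$.

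Combine this with $\|Y-X\|\le\frac{2}{\delta_{C,X}}\|C(Y)\|$, applied at a nearest manifold point as you rightly note. Both sides are then of the same order in $\|C(Y)\|$, and $\beta>\tilde\beta_X$ forces $C(Y)=0$. Proposition~\ref{pro:sta} then finishes the argument, as in your plan.
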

\begin{proof}
  For any $ Y \in \Omega_X $, it holds that
  \begin{equation*}
    \left\| {\rm D}\mathcal{A}(Y)^*[\nabla h(Y)] - \nabla h(Y)  \right\| \leq  \left( \frac{1}{6}\alpha_{\mathcal{A}}^2L_\phi + \frac{5}{2} \right) \| \nabla h(Y) \|.
  \end{equation*}
  Considering $ \nabla h(Y) = {\rm D}\mathcal{A}(Y)^*[\nabla f (\mathcal{A}(Y))] + \beta {\rm D}C(Y)^*[C(Y)] $, we simplify both terms in turn. For the first, we have
    \begin{equation*}
    \begin{aligned}
         & \left\| {\rm D}\mathcal{A}(Y)^*\left[{\rm D}\mathcal{A}(Y)^*[\nabla f (\mathcal{A}(Y))]\right] - {\rm D}\mathcal{A}(Y)^*[\nabla f (\mathcal{A}(Y))]  \right\| \\
        = & {\Big\|}  {\rm D}\mathcal{A}(Y)^*\left[{\rm D}\mathcal{A}(Y)^*[\nabla f (\mathcal{A}(Y))]\right] - {\rm D}\mathcal{A}(X)^*\left[{\rm D}\mathcal{A}(X)^*[\nabla f (\mathcal{A}(Y))]\right]  \\
        & +  {\rm D}\mathcal{A}(X)^*[\nabla f (\mathcal{A}(Y))] - {\rm D}\mathcal{A}(Y)^*[\nabla f (\mathcal{A}(Y))]  {\Big\|} \\
        \leq & \left(  \alpha_{\mathcal{A}}L_\phi \left\| {\rm D}\mathcal{A}(Y)^*[\nabla f (\mathcal{A}(Y))]\right\| + \alpha_{\mathcal{A}}L_\phi \left\| \nabla f (\mathcal{A}(Y))\right\|  \right)\| Y - X \| \\
        \leq &  \alpha_{\mathcal{A}}L_\phi \left( \frac{5}{2} + \frac{1}{6}\alpha_{\mathcal{A}}^2L_\phi \right) \| \nabla f (\mathcal{A}(Y))\| \| Y - X \| \\
        \leq & \frac{\alpha_{\mathcal{A}}^3L_\phi^2M_{X,f} + 15 \alpha_{\mathcal{A}}L_\phi M_{X,f} }{3\delta_{C,X}}\|C(Y)\|,
    \end{aligned}
  \end{equation*}
  where the first equation follows from Lemma \ref{le:mi} and the last inequality follows from Lemma \ref{le:cbo}. In the same way, we simplify the second term yields
  \begin{align*}
     & \left\| {\rm D}\mathcal{A}(Y)^*\left[{\rm D}C(Y)^*[C(Y)]\right] - {\rm D}C(Y)^*[C(Y)]  \right\| \\
    \geq & \left\| {\rm D}C(Y)^*[C(Y)]  \right\| - \left\| {\rm D}\mathcal{A}(Y)^*\left[{\rm D}C(Y)^*[C(Y)]\right]  \right\| \\
    \geq & \delta_{C,X}\| C(Y) \| - \left\| {\rm D}\mathcal{A}(Y)^*\left[{\rm D}C(Y)^*[C(Y)]\right] - {\rm D}\mathcal{A}(X)^*\left[{\rm D}C(\mathcal{A}(X))^*[C(Y)]\right]  \right\| \\
    \geq & \delta_{C,X}\| C(Y) \| - \left(\frac{2}{3}\alpha_{\mathcal{A}}^2L_\phi^2 + 3L_\phi + \frac{1}{3}\alpha_{\mathcal{A}}^2L_\phi^2 \right)\| Y - X \| \| C(Y) \| \\
    \geq & \left(\delta_{C,X} - \left(\alpha_{\mathcal{A}}^2L_\phi^2 + 3L_\phi\right) \varepsilon_X \right)\| C(Y) \| \\
    \geq & \frac{\delta_{C,X}}{2}\| C(Y) \|.
  \end{align*}
  Combining the two inequalities, we have that
  \begin{align*}
    \| \nabla h(Y) \| \geq & \frac{6}{\alpha_{\mathcal{A}}^2L_\phi + 15} \left\| {\rm D}\mathcal{A}(Y)^*[\nabla h(Y)] - \nabla h(Y)  \right\| \\
    \geq & \frac{6 \beta}{\alpha_{\mathcal{A}}^2L_\phi + 15} \left\| {\rm D}\mathcal{A}(Y)^*\left[{\rm D}C(Y)^*[C(Y)]\right] - {\rm D}C(Y)^*[C(Y)]  \right\| \\
     & - \frac{6}{\alpha_{\mathcal{A}}^2L_\phi + 15} \left\| {\rm D}\mathcal{A}(Y)^*\left[{\rm D}\mathcal{A}(Y)^*[\nabla f (\mathcal{A}(Y))]\right] - {\rm D}\mathcal{A}(Y)^*[\nabla f (\mathcal{A}(Y))]  \right\|\\
    > & \frac{6}{\alpha_{\mathcal{A}}^2L_\phi + 15} \left(\frac{\beta\delta_{C,X}}{2} - \frac{\alpha_{\mathcal{A}}^3L_\phi^2M_{X,f} + 15 \alpha_{\mathcal{A}}L_\phi M_{X,f} }{3\delta_{C,X}} \right)\| C(Y) \|.
  \end{align*}
  Assuming that $ X^* \in \Omega_X $ is a first-order stationary point of the \ref{cdf}, we have $ \nabla h(X^*) = 0 $. Therefore, when $ \beta > \tilde{\beta}_X $, the first-order stationary point $ X^* $ must satisfy $ \|C(X^*)\| = 0 $, implying that $ X^* \in \mathcal{M} $. By Proposition \ref{pro:sta}, it follows that $ X^* $ is also a first-order stationary point of \ref{ocp}.
\end{proof}

In the following we discuss the relationship between the second-order stationary points of the \ref{cdf} and the \ref{ocp}.
\begin{lemma}\label{le:sec1}
  Suppose Assumption \ref{ass2} holds. Then for any $ X \in \mathcal{M} $ and $ Z \in T_X\mathcal{M} $, it holds that
  \begin{equation*}
    {\rm D}^2 \mathcal{A}(X)^*\left[ Z, \operatorname{grad}f(X) \right] = {\rm D}^2 \mathcal{A}(X)^*\left[ Z, \nabla f(X) \right] + \frac{1}{2}\phi(Z)\Phi\left( \Theta_X(\nabla f(X))^{\top} \right).
  \end{equation*}
  Moreover, if $ X $ is a first-order stationary point of \ref{ocp}, then for any $ Z \in T_X\mathcal{M} $, it holds that
  \begin{equation*}
    {\rm D}^2 \mathcal{A}(X)^*\left[ Z, \nabla f(X) \right] = - \frac{1}{2}\phi(Z)\Phi\left( \Theta_X(\nabla f(X))^{\top}\right).
  \end{equation*}
\end{lemma}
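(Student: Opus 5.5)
The plan is to exploit bilinearity of ${\rm D}^2\mathcal{A}(X)^*[Z,\cdot]$ in its second argument together with the explicit formula for it obtained in the proof of Proposition \ref{pro:2h}. For $X \in \mathcal{F}$ and $Z, W \in \mathcal{F}$, that proof gives
\begin{equation*}
  {\rm D}^2\mathcal{A}(X)^*[Z,W] = -\frac{1}{2}W\Phi\left(\phi(Z)^{\top}X\right) - \frac{1}{2}\phi(X)\Phi\left(W^{\top}Z\right) - \frac{1}{2}\phi(Z)\Phi\left(W^{\top}X\right).
\end{equation*}
By Proposition \ref{pro:rg}, $\operatorname{grad}f(X) = \nabla f(X) - \phi(X)S$ with $S := \Theta_X(\nabla f(X)) \in \mathcal{S}_1$. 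Linearity then reduces the first claim to showing
\begin{equation*}
  {\rm D}^2\mathcal{A}(X)^*[Z,\phi(X)S] = -\frac{1}{2}\phi(Z)\Phi(S^{\top}) \qquad \text{for } Z \in T_X\mathcal{M}.
\end{equation*}

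We will substitute $W = \phi(X)S$ term by term. For the third term, $X \in \mathcal{M}$ gives $\phi(X)^{\top}X = (X^{\top}\phi(X))^{\top} = I_p$. Hence $W^{\top}X = S^{\top}\phi(X)^{\top}X = S^{\top}$, and this term is exactly $-\frac{1}{2}\phi(Z)\Phi(S^{\top})$.

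It then remains to show that the first two terms, $-\frac{1}{2}\phi(X)S\,\Phi(\phi(Z)^{\top}X)$ and $-\frac{1}{2}\phi(X)\Phi(S^{\top}\phi(X)^{\top}Z)$, cancel. For the first, the tangency condition \eqref{tx1} gives $X^{\top}\phi(Z) = -Z^{\top}\phi(X)$. Polarizing the identity $\psi(\phi(Y)^{\top}Y) = Y^{\top}\phi(Y)$ from Assumption \ref{ass_1} (replace $Y$ by $X+tZ$ and take the linear part in $t$) gives
\begin{equation*}
  \psi\left(\phi(Z)^{\top}X + \phi(X)^{\top}Z\right) = X^{\top}\phi(Z) + Z^{\top}\phi(X) = 0.
\end{equation*}
Using these, I would rewrite $\Phi(\phi(Z)^{\top}X)$ in terms of $Z^{\top}\phi(X)$ and $\psi(\phi(X)^{\top}Z)$. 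For the second term, I would similarly expand $\Phi(S^{\top}\phi(X)^{\top}Z)$ using $\phi(XT) = \phi(X)\psi(T)$, self-adjointness of $\phi$, and the structure $S = T^{\top}+\psi(T)$, and then try to match it against the first.

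This cancellation is the main obstacle. Both leftover terms are of the form $\phi(X)\cdot(\text{element of }\mathcal{G})$, so if exact cancellation fails I expect them at least to lie in $N_X\mathcal{M}$. In that case the identity would hold after applying $\mathcal{P}_{T_X\mathcal{M}}$, or inside inner products $\langle Z', \cdot\rangle$ with $Z' \in T_X\mathcal{M}$, which is how the lemma is used in the second-order analysis. I would first try for exact cancellation and otherwise record the statement modulo the normal space.

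For the second claim, if $X$ is first-order stationary then $\operatorname{grad}f(X)=0$. Bilinearity gives ${\rm D}^2\mathcal{A}(X)^*[Z,\operatorname{grad}f(X)] = 0$, and rearranging the first identity yields ${\rm D}^2\mathcal{A}(X)^*[Z,\nabla f(X)] = -\frac{1}{2}\phi(Z)\Phi(\Theta_X(\nabla f(X))^{\top})$.
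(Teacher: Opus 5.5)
Your route is the paper's route. Both use bilinearity in the second slot, the explicit formula for ${\rm D}^2\mathcal{A}(X)^*[Z,W]$ from the proof of Proposition~\ref{pro:2h}, the substitution $W=\phi(X)S$ with $S=\Theta_X(\nabla f(X))$, and $\phi(X)^{\top}X=I_p$ for the third term. The step you flag is exactly where the paper's proof is defective, and it cannot be closed, because the identity as stated is false. The paper discards the two leftover terms by citing $\Phi(\phi(Z)^{\top}X)=0$ and $\phi(X)\Theta_X(\nabla f(X))\in N_X\mathcal{M}$.

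The first fact is correct, but your polarization does not give it, since it only controls $\psi$ of the symmetrized sum. What you need is $\psi(\phi(Z)^{\top}X)=Z^{\top}\phi(X)$. Self-adjointness of $\phi$ together with $\phi(ZT)=\phi(Z)\psi(T)$ gives $\langle\psi(T),\phi(Z)^{\top}X\rangle=\langle ZT,\phi(X)\rangle=\langle T,Z^{\top}\phi(X)\rangle$, i.e.\ $\psi^*(\phi(Z)^{\top}X)=Z^{\top}\phi(X)$. This is what you want whenever $\psi$ is self-adjoint on $\mathcal{G}$, which holds for every example in Table~\ref{tab:ex}. Then $\Phi(\phi(Z)^{\top}X)={\rm D}C(X)[Z]=0$ and the first term vanishes. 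Note that its form alone would not place it in $N_X\mathcal{M}$, since $\phi(X)\mathcal{G}$ is larger than $N_X\mathcal{M}=\phi(X)\mathcal{S}_1$.

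The second fact is a non sequitur. Lying in $N_X\mathcal{M}$ does not make $-\frac{1}{2}\phi(X)\Phi(S^{\top}\phi(X)^{\top}Z)$ vanish, and in general it does not.

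Here is a concrete failure on the Stiefel manifold ($\phi=\psi=\mathrm{id}$, $\Phi(T)=T+T^{\top}$, $S=\operatorname{sym}(X^{\top}\nabla f(X))$). Write $K=X^{\top}Z$, which is skew; that term equals $-\frac{1}{2}X(SK-KS)$. Take $f(X)=\frac{1}{2}\operatorname{tr}(X^{\top}AX)$ with $A=\operatorname{diag}(1,2,\dots,n)$, $p=2$ and $X=[e_1,e_2]$. Then $\nabla f(X)=X\operatorname{diag}(1,2)$, so $X$ is first-order stationary with $S=\operatorname{diag}(1,2)$. The tangent vector $Z=XK$ with $K=\begin{pmatrix}0&1\\-1&0\end{pmatrix}$ gives $SK-KS=-\begin{pmatrix}0&1\\1&0\end{pmatrix}\neq 0$. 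So both displayed identities fail as matrix equalities. The correct identity, with the first term vanishing as above, is
\begin{equation*}
{\rm D}^2\mathcal{A}(X)^*[Z,\operatorname{grad}f(X)]={\rm D}^2\mathcal{A}(X)^*[Z,\nabla f(X)]+\frac{1}{2}\phi(Z)\Phi(S^{\top})+\frac{1}{2}\phi(X)\Phi\left(S^{\top}\phi(X)^{\top}Z\right).
\end{equation*}
The stated versions hold only modulo $N_X\mathcal{M}$, which is precisely your fallback.

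One correction to your remark on how the lemma is used. The modulo-$N_X\mathcal{M}$ version suffices for the tangent pairing in the second-order theorem. However, Lemma~\ref{le:sec2} pairs the identity with a normal vector, and there the dropped term matters. In the Stiefel case with $N=XT$, $T=T^{\top}$, the stated formula gives $\langle N,{\rm D}^2\mathcal{A}(X)^*[Z,\nabla f(X)]\rangle=-\operatorname{tr}(TKS)$, which is generally nonzero. The extra term contributes exactly $+\operatorname{tr}(TKS)$, so that lemma's conclusion survives only if you carry the full formula.
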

\begin{proof}
  For any $ X \in \mathcal{M} $ and $ Z \in T_X\mathcal{M} $, it follows from Propositions \ref{pro:rg} and \ref{pro:2h} that
  \begin{align*}
    {\rm D}^2 \mathcal{A}(X)^*\left[ Z, \operatorname{grad}f(X) \right] = & {\rm D}^2 \mathcal{A}(X)^*\left[ Z, \nabla f(X) - \phi(X)\Theta_X(\nabla f(X)) \right] \\
    = & {\rm D}^2 \mathcal{A}(X)^*\left[ Z, \nabla f(X) \right] - {\rm D}^2 \mathcal{A}(X)^*\left[ Z, \phi(X)\Theta_X(\nabla f(X)) \right]\\
    = & {\rm D}^2 \mathcal{A}(X)^*\left[ Z, \nabla f(X) \right] + \frac{1}{2}\phi(X)\Theta_X(\nabla f(X))\Phi\left((\phi(Z))^{\top}X\right) \\
    & + \frac{1}{2}\phi(X)\Phi\left(\Theta_X(\nabla f(X))^{\top}\phi(X)^{\top}Z\right)  + \frac{1}{2}\phi(Z)\Phi \left(\Theta_X(\nabla f(X))^{\top}\phi(X)^{\top}X\right) \\
    = & {\rm D}^2 \mathcal{A}(X)^*\left[ Z, \nabla f(X) \right] + \frac{1}{2}\phi(Z)\Phi\left( \Theta_X(\nabla f(X))^{\top} \right).
  \end{align*}
  Here the last equation holds because of the fact that $ \phi(X)\Theta_X(\nabla f(X)) \in N_X\mathcal{M} $ and $ \Phi\left(\phi(Z)^{\top}X\right) = 0 $. In particular, when $ X $ is a first-order stationary point of the \ref{ocp}, we have $ \operatorname{grad}f(X) = 0 $, which implies $ {\rm D}^2 \mathcal{A}(X)^*\left[ Z, \operatorname{grad}f(X) \right] = 0 $. We complete the proof.
\end{proof}

\begin{lemma}\label{le:sec2}
  Suppose Assumption \ref{ass2} holds. Then for any first-order stationary point $ X \in \mathcal{M} $ of \ref{ocp}, $ Z \in T_X\mathcal{M} $ and $ S \in N_X\mathcal{M} $, it holds that
  \begin{equation*}
    \left\langle S, \nabla^2 h (X)[Z] \right\rangle = 0
  \end{equation*}
\end{lemma}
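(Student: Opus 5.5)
Write $S=\phi(X)\Phi(T)$ for some $T\in\mathcal{G}$, which is allowed by the characterization of $N_X\mathcal{M}$. Then use the formula for $\nabla^2 h(X)[Z]$ from Proposition \ref{pro:2h} at $X\in\mathcal{M}$, where $C(X)=0$ and $\mathcal{A}(X)=X$. The penalty part reduces to $\beta\,\phi(X)\Phi\bigl(Z^{\top}\phi(X)+X^{\top}\phi(Z)\bigr)$, because the term $\phi(Z)\Phi(C(X))$ vanishes. By the description \eqref{tx1} of $T_X\mathcal{M}$, the argument ${\rm D}C(X)[Z]=Z^{\top}\phi(X)+X^{\top}\phi(Z)$ is zero, so the whole penalty contribution is zero. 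It therefore suffices to show $\langle S,\nabla^2 g(X)[Z]\rangle=0$.

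Split $\nabla^2 g(X)[Z]$ as in the proof of Proposition \ref{pro:2h}:
\[
\nabla^2 g(X)[Z]={\rm D}\mathcal{A}(X)^*\bigl[\nabla^2 f(X)[{\rm D}\mathcal{A}(X)[Z]]\bigr]+{\rm D}^2\mathcal{A}(X)^*[Z,\nabla f(X)].
\]

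\emph{First term.} Move the adjoint across to get $\langle {\rm D}\mathcal{A}(X)[S],\nabla^2 f(X)[\cdot]\rangle$. We claim ${\rm D}\mathcal{A}(X)$ annihilates $N_X\mathcal{M}$. By Theorem \ref{Th:cdm}, ${\rm D}C(X)\circ{\rm D}\mathcal{A}(X)=0$, so the range of ${\rm D}\mathcal{A}(X)$ lies in $T_X\mathcal{M}$. By Lemma \ref{le:mi}, ${\rm D}\mathcal{A}(X)$ is idempotent. To identify its kernel, use \eqref{m1}:
\[
{\rm D}\mathcal{A}(X)[S]=S-\tfrac12 X\bigl(\phi(S)^{\top}X+\phi(X)^{\top}S\bigr).
\]
With $S=\phi(X)W$ and $W=\Phi(T)\in\mathcal{S}_1$, Assumption \ref{ass_1} gives $\phi(S)=\phi(\phi(X))\psi(W)$. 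Using self-adjointness and $X^{\top}\phi(X)=I$, simplify $\phi(S)^{\top}X+\phi(X)^{\top}S$ to a multiple of $\phi(X)^{\top}\phi(X)W$. Then check that $S$ equals $\tfrac12 X(\cdots)$.

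I expect this to be the main obstacle. The identity is not obvious for general $\phi$; in the Stiefel case it reduces to $XW=\phi(X)W$. If the direct computation fails, use the weaker fact that suffices here. Since $\operatorname{range}{\rm D}\mathcal{A}(X)\subseteq T_X\mathcal{M}$, we have ${\rm D}\mathcal{A}(X)^*[N_X\mathcal{M}]$ lying in the kernel complement structure. Then argue as in \cite{xiao2024dissolving} that ${\rm D}\mathcal{A}(X)^*$ maps into $\operatorname{range}{\rm D}\mathcal{A}(X)^*=(\ker{\rm D}\mathcal{A}(X))^{\perp}$. Together with $\ker{\rm D}\mathcal{A}(X)\supseteq N_X\mathcal{M}$ (from a dimension count: rank equals $\dim T_X\mathcal{M}$), this gives $\langle S,{\rm D}\mathcal{A}(X)^*[\cdot]\rangle=0$.

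\emph{Second term.} Since $X$ is first-order stationary, Lemma \ref{le:sec1} gives
\[
{\rm D}^2\mathcal{A}(X)^*[Z,\nabla f(X)]=-\tfrac12\phi(Z)\Phi\bigl(\Theta_X(\nabla f(X))^{\top}\bigr).
\]
Stationarity also gives $\nabla f(X)=\phi(X)\Lambda$ with $\Lambda=\Theta_X(\nabla f(X))\in\mathcal{S}_1$. Then compute
\[
\langle \phi(X)\Phi(T),\phi(Z)\Phi(\Lambda^{\top})\rangle=\operatorname{tr}\bigl(\Phi(T)^{\top}\phi(X)^{\top}\phi(Z)\Phi(\Lambda^{\top})\bigr).
\]
Use self-adjointness of $\phi$ to transfer it, and the relation $\psi(\phi(X)^{\top}X)=X^{\top}\phi(X)$, to rewrite this in terms of $X^{\top}\phi(Z)$. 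Tangency gives $X^{\top}\phi(Z)=-Z^{\top}\phi(X)$, i.e.\ $\Phi(\phi(Z)^{\top}X)=0$ as used in Lemma \ref{le:sec1}. Symmetrizing with the structure of $\mathcal{S}_1$ should then make the trace cancel, since it is the trace of a product of a $\Phi$-symmetric and a $\Phi$-skew element. Combining the two terms gives $\langle S,\nabla^2 h(X)[Z]\rangle=0$.
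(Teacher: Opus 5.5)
Your outline is the same as the paper's. The penalty part vanishes because $C(X)=0$ and ${\rm D}C(X)[Z]=0$, and this step is fine. Then $\nabla^2 g(X)[Z]$ is split into ${\rm D}\mathcal{A}(X)^*[\nabla^2 f(X)[Z]]$ and ${\rm D}^2\mathcal{A}(X)^*[Z,\nabla f(X)]$. However, both vanishing claims that carry the argument fail, and the paper's proof simply asserts them, so it gives no way around either.

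The first is the one you flag: ${\rm D}\mathcal{A}(X)$ does not annihilate $N_X\mathcal{M}$ in general. It is idempotent (Lemma~\ref{le:mi}), it is the identity on $T_X\mathcal{M}$, and its range is $T_X\mathcal{M}$. So it is a projector onto $T_X\mathcal{M}$, but an oblique one. By \eqref{m1} and Assumption~\ref{ass_1}, ${\rm D}\mathcal{A}(X)[XK]=\tfrac12 X(K-\psi(K)^\top)$, so its kernel contains matrices of the form $XK$, whereas $N_X\mathcal{M}$ consists of matrices $\phi(X)\Phi(T)$. For the generalized Stiefel manifold with $p=1$ one gets ${\rm D}\mathcal{A}(X)=I-XX^\top B$. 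Its kernel is $\mathrm{span}\{X\}$, while $N_X\mathcal{M}=\mathrm{span}\{BX\}$, and ${\rm D}\mathcal{A}(X)[BX]=BX-(X^\top B^2X)X\neq0$ unless $X$ is an eigenvector of $B$. Your dimension count only gives $\dim\ker{\rm D}\mathcal{A}(X)=\dim N_X\mathcal{M}$, not inclusion.

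This gap cannot be closed, because the statement itself fails. Take $f(Y)=\tfrac12\|Y-X\|^2$, so $\nabla f(X)=0$ and $\langle S,\nabla^2h(X)[Z]\rangle=\langle{\rm D}\mathcal{A}(X)[S],Z\rangle$. Let $B=\mathrm{diag}(1,4)$ and $X=(a,b)^\top$ with $a^2+4b^2=1$, $ab\neq0$. With $S=BX$ and $Z=(-4b,a)^\top\in T_X\mathcal{M}$, this inner product equals $3ab(a^2+16b^2)\neq0$. The identity can only hold when $\ker{\rm D}\mathcal{A}(X)=N_X\mathcal{M}$, e.g.\ when $\phi(X)=X$.

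Second, even where the first claim holds (Stiefel, $\phi(X)=X$), your treatment of the ${\rm D}^2\mathcal{A}$ term is wrong. The formula you import from Lemma~\ref{le:sec1} is incomplete. Substituting $\nabla f(X)=\phi(X)\Lambda$ into the expression for ${\rm D}^2\mathcal{A}(X)^*$ from Proposition~\ref{pro:2h} gives $-\tfrac12\phi(X)\Phi(\Lambda^\top\phi(X)^\top Z)-\tfrac12\phi(Z)\Phi(\Lambda^\top)$. The first piece lies in $N_X\mathcal{M}$, so it matters when paired with $S$, yet Lemma~\ref{le:sec1} drops it without justification.

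Moreover, the quantity you claim vanishes does not. On the Stiefel manifold, write $W=\Phi(T)$ and $\Lambda$ (both symmetric) and $\Omega=X^\top Z$ (skew). Then $\langle XW,\phi(Z)\Phi(\Lambda^\top)\rangle=2\operatorname{tr}(W\Omega\Lambda)$. This is a trace of three factors, not of a symmetric times a skew matrix, and it equals $2$ for $W=e_1e_1^\top$, $\Omega=e_1e_2^\top-e_2e_1^\top$, $\Lambda=e_1e_2^\top+e_2e_1^\top$. The cancellation actually comes from the omitted piece. That piece equals $\tfrac12X(\Omega\Lambda-\Lambda\Omega)$ and contributes $\tfrac12\operatorname{tr}(W(\Omega\Lambda-\Lambda\Omega))=\operatorname{tr}(W\Omega\Lambda)$. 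This offsets the $-\operatorname{tr}(W\Omega\Lambda)$ coming from $-\tfrac12\phi(Z)\Phi(\Lambda^\top)$. A correct proof in the case $\phi(X)=X$ therefore has to use the full ${\rm D}^2\mathcal{A}(X)^*[Z,\nabla f(X)]$ rather than the formula in Lemma~\ref{le:sec1}.
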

\begin{proof}
  The proof starts at $ \phi(Z) \in T_X\mathcal{M} $. Since $ Z \in T_X\mathcal{M} $, we have
  \begin{equation*}
     X^{\top} \phi(Z)  +  Z^{\top} \phi(X) = 0,
  \end{equation*}
  which means that
  \begin{equation*}
    \psi\left( \phi(X)^{\top}Z + \psi(\phi(Z)^{\top}X) \right) = 0,
  \end{equation*}
  i.e., $ \phi(Z) \in T_X\mathcal{M} $. If $X$ is a first-order stationary point of the \ref{ocp}, then by Proposition \ref{pro:2h} and Lemma \ref{le:sec1} we have
  \begin{align*}
    \left\langle S, \nabla^2 h (X)[Z] \right\rangle = & \left\langle S, {\rm D}\mathcal{A}(X)^*\left[ \nabla^2 f(\mathcal{A}(X))\left[{\rm D}\mathcal{A}(X)[Z]\right]\right] + {\rm D}^2 \mathcal{A}(X)^*\left[ Z, \nabla f(\mathcal{A}(X)) \right] \right\rangle \\
    & + \beta \left\langle S, \left( \phi(Z)\Phi\left( X^{\top}\phi(X) - I_p \right) + \phi(X)\Phi\left(Z^{\top}\phi(X)\right) + \phi(X)\Phi\left(X^{\top}\phi(Z) \right) \right) \right\rangle\\
    = & \left\langle S, {\rm D}\mathcal{A}(X)^*\left[ \nabla^2 f(X)\left[Z\right]\right] \right\rangle + \left\langle S, {\rm D}^2 \mathcal{A}(X)^*\left[ Z, \nabla f(X) \right] \right\rangle \\
    = & - \frac{1}{2}\left\langle s, \phi(Z)\Phi\left( \Theta_X(\nabla f(X))^{\top} \right) \right\rangle \\
    = & 0.
  \end{align*}
  The proof is completed.
\end{proof}

\begin{theorem}
  Suppose Assumption \ref{ass2} holds. For any given $ X \in \mathcal{M} $, if $ \beta > \tilde{\beta}_X $ then \ref{cdf} and \ref{ocp} share the same second-order stationary points over $ \Omega_X $.
\end{theorem}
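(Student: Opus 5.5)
By Theorem \ref{the:first}, every first-order stationary point of \ref{cdf} in $\Omega_X$ lies on $\mathcal{M}$ and is a first-order stationary point of \ref{ocp}. Conversely, Proposition \ref{pro:sta} shows that first-order stationary points of \ref{ocp} on $\mathcal{M}$ are first-order stationary points of \ref{cdf}. So it suffices to fix a point $Y\in\mathcal{M}\cap\Omega_X$ with $\operatorname{grad} f(Y)=0$ and prove that
\[
\nabla^2 h(Y)\succeq 0 \ \text{on } \mathcal{F} \iff \langle Z,\operatorname{hess} f(Y)[Z]\rangle\ge 0 \ \text{for all } Z\in T_Y\mathcal{M}.
\]
Since $\mathcal{F}=T_Y\mathcal{M}\oplus N_Y\mathcal{M}$, I will decompose any direction as $D=Z+S$ with $Z\in T_Y\mathcal{M}$ and $S\in N_Y\mathcal{M}$. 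By Lemma \ref{le:sec2} the cross terms vanish, which gives
\[
\langle D,\nabla^2h(Y)[D]\rangle=\langle Z,\nabla^2h(Y)[Z]\rangle+\langle S,\nabla^2h(Y)[S]\rangle .
\]
The proof therefore reduces to two computations, one for the tangent block and one for the normal block.

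\textbf{Tangent block.} I use Proposition \ref{pro:2h} at $Y\in\mathcal{M}$. The term $\phi(Z)\Phi(Y^\top\phi(Y)-I_p)$ vanishes because $C(Y)=0$. The other two $\beta$-terms combine to $\phi(Y)\Phi\bigl({\rm D}C(Y)[Z]\bigr)$, and this is zero because $Z\in T_Y\mathcal{M}$ by \eqref{tx1}. Next, ${\rm D}\mathcal{A}(Y)[Z]=Z$ for tangent $Z$: by \eqref{m1} the correction equals $\tfrac12 Y\,{\rm D}C(Y)[Z]^\top$-type terms, which vanish. Consequently
\[
\langle Z,{\rm D}\mathcal{A}(Y)^*\nabla^2 f(Y)[Z]\rangle=\langle Z,\nabla^2 f(Y)[Z]\rangle .
\]
By Lemma \ref{le:sec1}, the second-derivative term of $\mathcal{A}$ contributes $-\tfrac12\langle Z,\phi(Z)\Phi(\Theta_Y(\nabla f(Y))^\top)\rangle$. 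I will then compare this with Proposition \ref{pro:rg}, where $\langle Z,\operatorname{hess} f(Y)[Z]\rangle=\langle Z,\nabla^2 f(Y)[Z]-\phi(Z)\Theta_Y(\nabla f(Y))\rangle$. The goal is to show that the two curvature terms agree, using the symmetry of $\Theta_Y\in\mathcal{S}_1$ (so $\Phi$ acts as doubling on it, up to the $\psi$ identities of Assumption \ref{ass_1}) and the self-adjointness of $\phi$. This yields
\[
\langle Z,\nabla^2 h(Y)[Z]\rangle=\langle Z,\operatorname{hess} f(Y)[Z]\rangle,
\]
independently of $\beta$. This identity alone gives the direction that second-order stationarity for \ref{cdf} implies second-order stationarity for \ref{ocp}.

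\textbf{Normal block.} For the reverse direction I need $\langle S,\nabla^2h(Y)[S]\rangle\ge 0$ for $S\in N_Y\mathcal{M}$ once $\beta>\tilde\beta_Y$. The penalty part equals $\beta\|{\rm D}C(Y)[S]\|^2\ge\beta\delta_{C,X}^2\|S\|^2$, since $Y\in\Omega_X\subset\Delta_X$. The $g$-part I bound by $c\,M_{X,f}\|S\|^2$, with $c$ polynomial in $\alpha_{\mathcal A}$ and $L_\phi$, using the bound on $\|{\rm D}\mathcal{A}\|$ and the explicit formula for ${\rm D}^2\mathcal{A}^*$ in Proposition \ref{pro:2h}. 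One point needs care: the term involving $\nabla^2 f$ must not introduce a new constant. Here I use that ${\rm D}\mathcal{A}(Y)[S]=0$ for normal $S$ (for $S=\phi(Y)\Phi(T)$, the projection-like identity from Lemma \ref{le:mi} shows that ${\rm D}\mathcal{A}(Y)$ annihilates $N_Y\mathcal{M}$), so that term drops out. What remains is of order $M_{X,f}$, and the threshold $\tilde\beta_X$ is designed to dominate it.

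I expect the main obstacle to be the algebra matching the $\Theta$-terms in the tangent block, where the $\psi$ and $\Phi$ manipulations must be carried out carefully, together with checking that $\tilde\beta_X$ indeed exceeds the normal-block constant. With both blocks in hand, the two directions of the equivalence follow immediately.
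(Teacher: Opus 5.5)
Your architecture matches the paper's. You reduce to a first-order stationary $Y\in\mathcal{M}\cap\Omega_X$, show $\langle Z,\nabla^2 h(Y)[Z]\rangle=\langle Z,\operatorname{hess}f(Y)[Z]\rangle$ on $T_Y\mathcal{M}$, and let the penalty dominate a complementary block. The tangent identity is correct: the curvature terms match because $\langle Z,\phi(Z)\psi(\Theta^\top)\rangle=\langle\phi(Z),Z\Theta^\top\rangle=\langle Z,\phi(Z)\Theta\rangle$ by self-adjointness and trace cyclicity, so no symmetry of $\Theta$ is needed. Hence the direction from second-order stationarity for \ref{cdf} to that for \ref{ocp} is fine.

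The converse breaks at your claim that ${\rm D}\mathcal{A}(Y)$ annihilates $N_Y\mathcal{M}$. Lemma \ref{le:mi}, with ${\rm D}C(Y)\circ{\rm D}\mathcal{A}(Y)=0$ and ${\rm D}\mathcal{A}(Y)[Z]=Z$ on $T_Y\mathcal{M}$, only makes ${\rm D}\mathcal{A}(Y)$ \emph{a} projection onto $T_Y\mathcal{M}$, not the orthogonal one. From \eqref{m1} and $\phi(Y)^\top Y=I_p$ one gets $\ker{\rm D}\mathcal{A}(Y)=\{YK: K\in\mathcal{G},\ K=\psi(K)^\top\}$, whereas $N_Y\mathcal{M}=\phi(Y)\mathcal{S}_1$. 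These coincide for $\phi=\mathrm{id}$ (Stiefel, tensor Stiefel), but not in general. For example, take $\phi(X)=BX$ with $n=2$, $p=1$, $B=\operatorname{diag}(1,4)$, $Y=(1/\sqrt2,\,1/(2\sqrt2))^\top$. The normal vector $S=BY$ gives ${\rm D}\mathcal{A}(Y)[S]=BY-(Y^\top B^2Y)Y=\tfrac{3}{2\sqrt2}(-1,\tfrac12)^\top\neq0$.

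It is ${\rm D}\mathcal{A}(Y)^*$ that kills $N_Y\mathcal{M}$. The term you need to drop, however, is $\langle{\rm D}\mathcal{A}(Y)[S],\nabla^2 f(Y)[{\rm D}\mathcal{A}(Y)[S]]\rangle$. It survives and needs a bound on $\nabla^2 f$ that $\tilde\beta_X$ does not contain. The same defect breaks the cross-term step. The quantity $\langle S,\nabla^2 h(Y)[Z]\rangle$ contains $\langle{\rm D}\mathcal{A}(Y)[S],\nabla^2 f(Y)[Z]\rangle$. For $f(X)=\tfrac12\|X-Y\|^2$ and $Z={\rm D}\mathcal{A}(Y)[S]$ the cross term equals $\|{\rm D}\mathcal{A}(Y)[S]\|^2\neq0$, so Lemma \ref{le:sec2} fails as stated. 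The paper's proof drops this $\nabla^2 f$ term in the normal block without justification, so you inherited this gap rather than introduced it.

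The repair is an oblique split: $D=Z+S$ with $Z={\rm D}\mathcal{A}(Y)[D]\in T_Y\mathcal{M}$ and $S=D-Z\in\ker{\rm D}\mathcal{A}(Y)$. Then:
\begin{itemize}
\item every $\nabla^2 f$ contribution involving $S$ genuinely vanishes;
\item the penalty cross term is $\beta\langle{\rm D}C(Y)[S],{\rm D}C(Y)[Z]\rangle=0$;
\item the remaining cross term also vanishes. Write $\nabla f(Y)={\rm D}C(Y)^*[\Lambda]$, which first-order stationarity allows. The second-order chain rule with ${\rm D}\mathcal{A}(Y)[S]=0$ gives $\langle\nabla f(Y),{\rm D}^2\mathcal{A}(Y)[Z,S]\rangle=\langle\Lambda,{\rm D}^2(C\circ\mathcal{A})(Y)[Z,S]\rangle$. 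This is zero because ${\rm D}(C\circ\mathcal{A})$ vanishes identically on $\mathcal{M}$ (Theorem \ref{Th:cdm}) and $Z$ is tangent.
\end{itemize}
For coercivity on the kernel, ${\rm D}C(Y)[YK]=\psi(K)+K^\top=2K^\top$, so $\|{\rm D}C(Y)[S]\|\ge 2\|S\|/\|Y\|$. The $S$-block is then at least $(4\beta/\|Y\|^2-3L_\phi\|Y\|M_{X,f})\|S\|^2$. The threshold must be stated with this constant, or shown to be dominated by $\tilde\beta_X$, rather than with $\delta_{C,X}$. That constant can only bound ${\rm D}C(Y)$ from below on $N_Y\mathcal{M}$.
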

\begin{proof}
  First, if $Y \in \Omega_X$ is a second-order stationary point of \ref{cdf}, for any $ Z \in T_Y \mathcal{M} $, we have $ \left\langle  Z, \nabla^2 h (Y)[Z] \right\rangle \geq 0 $. Then it follows from theorem \ref{the:first} that $ Y \in \mathcal{M} $ and that it is a first-order stationary point of the \ref{ocp}. Combining Proposition \ref{pro:2h} and Lemma \ref{le:sec1} we obtain that
  \begin{align*}
    \left\langle Z, \nabla^2 h (Y)[Z] \right\rangle = & \left\langle Z, {\rm D}\mathcal{A}(Y)^*\left[ \nabla^2 f(\mathcal{A}(Y))\left[{\rm D}\mathcal{A}(Y)[Z]\right]\right] + {\rm D}^2 \mathcal{A}(Y)^*\left[ Z, \nabla f(\mathcal{A}(Y)) \right] \right\rangle \\
    & + \beta \left\langle Z, \left( \phi(Z)\Phi\left( Y^{\top}\phi(Y) - I_p \right) + \phi(Y)\Phi\left(Z^{\top}\phi(Y)\right) + \phi(Y)\Phi\left(Y^{\top}\phi(Z) \right) \right) \right\rangle\\
    = & \left\langle Z, {\rm D}\mathcal{A}(Y)^*\left[ \nabla^2 f(Y)\left[Z\right]\right] \right\rangle + \left\langle Z, {\rm D}^2 \mathcal{A}(Y)^*\left[ Z, \nabla f(Y) \right] \right\rangle \\
    = & \left\langle Z, \nabla^2 f(Y)\left[Z\right] \right\rangle - \frac{1}{2}\left\langle Z, \phi(Z)\Phi\left( \Theta_Y(\nabla f(Y))^{\top} \right) \right\rangle \\
    = & \left\langle Z, \nabla^2 f(Y)\left[Z\right] \right\rangle - \frac{1}{2}\left\langle Z, \phi(Z)\Phi\left( \nabla f(Y)^{\top} Y \right) \right\rangle.
  \end{align*}
  Further, for any $ Z \in T_Y \mathcal{M} $, we have
  \begin{align*}
    \left\langle Z, \operatorname{hess}f(Y)[Z] \right\rangle = & \left\langle Z, \mathcal{P}_{T_Y\mathcal{M}} \left( \nabla^2 f(Y)[Z] - \phi(Z)\Theta_Y(\nabla f(Y))\right) \right\rangle \\
    = & \left\langle Z,  \nabla^2 f(Y)[Z] - \phi(Z)\Theta_Y(\nabla f(Y)) \right\rangle\\
    = & \left\langle Z,  \nabla^2 f(Y)[Z]  \right\rangle - \left\langle Z,  \phi(Z)Y^{\top}\nabla f(Y) \right\rangle,
  \end{align*}
  where the last equation is due to $ \operatorname{grad}f(Y) = 0 $, i.e., $ \Theta_Y(\nabla f(Y)) = Y^{\top}\nabla f(Y) $. Since $ \phi $ is a self-adjoint linear operator, it follows that
  \begin{equation*}
    \left\langle Z,  \phi(Z)Y^{\top}\nabla f(Y) \right\rangle = \left\langle Z\nabla f(Y)^{\top}Y,  \phi(Z) \right\rangle = \left\langle \phi(Z)\psi(\nabla f(Y)^{\top}Y),  Z \right\rangle.
  \end{equation*}
  Thus we obtain the conclusion
  \begin{equation*}
    \left\langle Z, \operatorname{hess}f(Y)[Z] \right\rangle = \left\langle  Z, \nabla^2 h (Y)[Z] \right\rangle \geq 0.
  \end{equation*}
  which implies that $ Y $ is a second-order stationary point of \ref{ocp}.

  On the other hand, since \ref{cdf} is defined on $ \mathcal{F} $, to verify that $Y$ is a second-order stationary point of \ref{cdf}, it is necessary to show that for any $ T \in \mathcal{F} $, the inequality $ \left\langle T, \nabla^2 h (Y)[T] \right\rangle \geq 0 $ holds. Noting that $ \mathcal{M} $ is an embedded submanifold of $ \mathcal{F} $, any $ T \in \mathcal{F}$ can be decomposed as $ T = Z + S $, where $ Z \in T_Y \mathcal{M} $ and $ S \in N_Y \mathcal{M} $. Moreover, if $Y$ is a second-order stationary point of \ref{ocp}, for any $ Z \in T_Y \mathcal{M} $, it holds that $ \left\langle Z, \operatorname{hess}f(Y)[Z] \right\rangle \geq 0 $. Following the argument in the previous proof, we similarly obtain $ \left\langle Z, \nabla^2 h (Y)[Z] \right\rangle \geq 0 $.

  Next, for any $ S \in N_Y \mathcal{M} $, we have
  \begin{align*}
    & \left\langle S, \nabla^2 h (Y)[S] \right\rangle \\
     = & \left\langle S, {\rm D}\mathcal{A}(Y)^*\left[ \nabla^2 f(\mathcal{A}(Y))\left[{\rm D}\mathcal{A}(Y)[S]\right]\right] + {\rm D}^2 \mathcal{A}(Y)^*\left[ S, \nabla f(\mathcal{A}(Y)) \right] \right\rangle \\
    & + \beta \left\langle S, \left( \phi(S)\Phi\left( Y^{\top}\phi(Y) - I_p \right) + \phi(Y)\Phi\left(S^{\top}\phi(Y)\right) + \phi(Y)\Phi\left(Y^{\top}\phi(S) \right) \right) \right\rangle \\
    = & \beta \left\langle S, {\rm D}C(Y)^*[{\rm D}C(Y)[S]] \right\rangle - \frac{1}{2}\left\langle S, \nabla f(Y)\Phi(\phi(S)^{\top}Y) + \phi(Y)\Phi(\nabla f(Y)^{\top}S) + \phi(S) \Phi(\nabla f(Y)^{\top}Y) \right\rangle\\
    \geq & \beta \delta_{C,X}^2\|S\|^2 - 3 L_\phi \| Y \|\| \nabla f(Y) \|\|S\|^2 \\
    > & \left( \beta \delta_{C,X}^2 - \alpha_\mathcal{A} L_\phi M_{X,f} \right)\|S\|^2   > 0.
  \end{align*}
  where the penultimate inequality holds due to $ \mathcal{A}(Y) = Y $ and the last inequality holds due to $ \beta > \frac{2\alpha_{\mathcal{A}}^3L_\phi^2M_{X,f} + 30 \alpha_{\mathcal{A}}L_\phi M_{X,f} }{3\delta_{C,X}^2} > \frac{\alpha_{\mathcal{A}}L_\phi M_{X,f}}{\delta_{C,X}^2} $.
  When $ \beta > \tilde{\beta}_X $, it follows from Lemma \ref{le:sec2} that
  \begin{equation*}
    \left\langle T, \nabla^2 h (Y)[T] \right\rangle = \left\langle Z, \nabla^2 h (Y)[Z] \right\rangle + 2\left\langle S, \nabla^2 h (Y)[Z] \right\rangle + \left\langle S, \nabla^2 h (Y)[S] \right\rangle > 0.
  \end{equation*}
  Therefore, $ Y $ is a second-order stationary point of \ref{cdf}.
\end{proof}

\section{Improved penalty function approaches for solving \ref{ocp}}\label{sec:4}
This section revisits the algorithmic framework for solving optimization problems with Riemannian manifold constraints as described in \cite{xiao2024dissolving}, which also serves as the foundation for addressing the OCP problem. The details are given in Algorithm \ref{al:1}. In the subsections that follow, we analyze the stability properties of infeasible points within this framework and examine its computational complexity.

\begin{algorithm}[H]\label{al:1}
\caption{Improved Penalty Function Approaches for \ref{ocp}.}
\KwIn{Linear mapping $ \phi $, objective function $f$, penalty parameter $\beta$, initial point $ X_0 $, stationarity tolerance $ \varepsilon_s $, feasibility tolerance $ \varepsilon_f $.}
\KwOut{The solution $ {X^*} \in \mathcal{M} $ of \ref{ocp}.}
Construct the penalty function \ref{cdf};\\
Choose an existing unconstrained method to generate the sequence $ \{X_k\} $ starting at the initial point $ X_0 $. Stop when the tolerance $ \varepsilon_s $ is reached to obtain $ \tilde{X} $;\\
\While{$\|C(\tilde{X})\| \geq \varepsilon_f $}{$\tilde{X} = \mathcal{A}(\tilde{X})$;}
\Return $ X^* = \tilde{X} $.
\end{algorithm}

\begin{remark}
  Algorithm \ref{al:1} is an infeasible method; therefore, directly applying an unconstrained optimization method to solve the CDF (i.e., Step 2 of Algorithm 1) may yield a solution that does not meet the desired feasibility requirements. In such cases, it becomes necessary to execute Steps 3–5 as a post-processing procedure. According to Theorem \ref{them:noe}, this additional processing can further reduce the degree of constraint violation and improve feasibility.
\end{remark}

\subsection{Stationarity at infeasible points}
In this subsection, we investigate the stationarity of \ref{cdf}. Specifically, we aim to establish a connection between the optimality and feasibility of the solution of \ref{ocp} through $ \|\nabla h(Y)\| $. Furthermore, we analyze the impact of Steps 3–5 in Algorithm \ref{al:1} on improving the quality of the solution, particularly in terms of enhancing its feasibility and convergence accuracy.

\begin{proposition}\label{pro:con}
  For any given $ X \in \mathcal{M} $, suppose $ \beta \geq \tilde{\beta}_X $ and $ Y \in \Omega_X $, then, it holds that
  \begin{equation*}
    \|\nabla h(Y)\|^2  \geq \| \nabla g(Y)\|^2 + \frac{\beta}{9}\| C(Y) \|.
  \end{equation*}
\end{proposition}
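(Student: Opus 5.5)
The plan is to expand the square of $\nabla h(Y) = \nabla g(Y) + \beta\,{\rm D}C(Y)^*[C(Y)]$, which is Proposition \ref{pro:nah} rewritten with ${\rm D}C(Y)^*[T]=\phi(Y)\Phi(T)$ from Lemma \ref{le1}. This gives
\begin{equation*}
\|\nabla h(Y)\|^2 = \|\nabla g(Y)\|^2 + 2\beta\left\langle \nabla g(Y), {\rm D}C(Y)^*[C(Y)]\right\rangle + \beta^2\left\|{\rm D}C(Y)^*[C(Y)]\right\|^2 ,
\end{equation*}
and the task reduces to bounding the last two terms from below by $\frac{\beta}{9}\|C(Y)\|$. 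I should say at the outset that this route only controls these terms quadratically in $\|C(Y)\|$, so it does not by itself reach the stated linear bound; the case analysis below shows where the gap sits.

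\textbf{Quadratic term.} I will reuse the estimate from the proof of Theorem \ref{the:first}, namely $\|{\rm D}C(Y)^*[C(Y)]\|\ge \frac{\delta_{C,X}}{2}\|C(Y)\|$ on $\Omega_X$. This gives $\beta^2\|{\rm D}C(Y)^*[C(Y)]\|^2\ge \frac{\beta^2\delta_{C,X}^2}{4}\|C(Y)\|^2$.

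\textbf{Cross term.} Write $\langle \nabla g(Y), {\rm D}C(Y)^*[C(Y)]\rangle = \langle G(Y), {\rm D}\mathcal{A}(Y)[{\rm D}C(Y)^*[C(Y)]]\rangle$. The key fact is that ${\rm D}\mathcal{A}(X)$ annihilates $N_X\mathcal{M}$ at $X\in\mathcal{M}$. I would check this directly from \eqref{m1} with $Z=\phi(X)\Phi(T)$, using Assumption \ref{ass_1}(3) and $X^\top\phi(X)=I_p$.

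Next I replace $Y$ by $X$ at the cost of Lipschitz errors:
\begin{itemize}
\item Lemma \ref{lem:DA} for ${\rm D}\mathcal{A}$;
\item the ${\rm D}C$ Lipschitz lemma, which gives ${\rm D}C(Y)^*$ Lipschitz with constant $2L_\phi$ after adjoining.
\end{itemize}
This yields $|\langle\cdot,\cdot\rangle|\le c_1 M_{X,f}L_\phi\|Y-X\|\,\|C(Y)\|$. Lemma \ref{le:cbo} then turns this into $\le \frac{2c_1M_{X,f}L_\phi}{\delta_{C,X}}\|C(Y)\|^2$.

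\textbf{Combining.} Together these give
\begin{equation*}
\|\nabla h(Y)\|^2\ge\|\nabla g(Y)\|^2+\beta\left(\tfrac{\beta\delta_{C,X}^2}{4}-\tfrac{4c_1M_{X,f}L_\phi}{\delta_{C,X}}\right)\|C(Y)\|^2 .
\end{equation*}
The condition $\beta\ge\tilde\beta_X$ (first entry of the max) makes the bracket at least $\frac{\beta\delta_{C,X}^2}{8}$.

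\textbf{Converting to the linear form.} I split into two cases.
\begin{itemize}
\item If $C(Y)=0$, the claim is an equality.
\item If $\|C(Y)\|\ge \frac{8}{9\beta\delta_{C,X}^2}$, then $\frac{\beta^2\delta_{C,X}^2}{8}\|C(Y)\|^2\ge\frac{\beta}{9}\|C(Y)\|$, and the statement follows.
\end{itemize}

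\textbf{Main obstacle.} The remaining regime $0<\|C(Y)\|<\frac{8}{9\beta\delta_{C,X}^2}$ is the crux, and I expect it to fail. Every term above is $O(\|C(Y)\|^2)$, so no linear lower bound can come out of this expansion. My first attempt would be to look for a first-order (in $\|Y-X\|$) contribution of the cross term $\langle G(Y),{\rm D}\mathcal{A}(Y)[{\rm D}C(Y)^*C(Y)]\rangle$. However, Lemma \ref{le:mi} and the annihilation property above suggest this contribution vanishes.

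Indeed, at $Y=X+tS$ with $S\in N_X\mathcal{M}$ and $t\to0$, one has $\|C(Y)\|=\Theta(t)$. The difference $\|\nabla h\|^2-\|\nabla g\|^2$ is then $O(t^2)$, which is incompatible with $\frac{\beta}{9}\|C(Y)\|=\Theta(t)$.

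So the plan proves the statement whenever $\|C(Y)\|$ is bounded below as in the second case, and gives the quadratic inequality in general. I would flag that the literal linear bound needs either this extra restriction or the exponent on $\|C(Y)\|$ corrected to $2$.
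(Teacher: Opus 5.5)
Your expansion of $\|\nabla g(Y)+\beta\,{\rm D}C(Y)^*[C(Y)]\|^2$ is the same one the paper uses, and your verdict is right: the stated linear bound is false. You do not need a perturbation argument to see this. With $f\equiv 0$ one has $\nabla g\equiv 0$, and the claim becomes $\beta\|{\rm D}C(Y)^*[C(Y)]\|^2\ge\frac{1}{9}\|C(Y)\|$, whose left side is $O(\|C(Y)\|^2)$. For $n=p=1$, $\phi=\mathrm{id}$ it reads $36\beta y^2|y^2-1|\ge 1$, which fails as $y\to 1$.

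The paper obtains its linear term from $\beta^2\delta_{C,X}^2\|C(Y)\|^2\ge\beta^2\delta_{C,X}^2(1-\|Y^\top\phi(Y)\|)\|C(Y)\|$. It then treats $1-\|Y^\top\phi(Y)\|$ as a positive constant; the third entry of $\tilde\beta_X$ effectively replaces it by $1-\alpha_{C,\mathcal{A}}$. But $\|Y^\top\phi(Y)\|\ge\|I_p\|-\|C(Y)\|=\sqrt{p}-\|C(Y)\|$, so that factor is at most $\|C(Y)\|$, and it is negative near $\mathcal{M}$ when $p\ge 2$. Hence nothing linear survives to dominate $\frac{\beta}{9}\|C(Y)\|$ together with the linear cross-term bound. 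That is exactly the gap you located.

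Your proposed repair, however, rests on a false ``key fact''. For $X\in\mathcal{M}$, ${\rm D}\mathcal{A}(X)$ is idempotent and equals the identity on $T_X\mathcal{M}$, but in general it is an \emph{oblique} projection. By Assumption~\ref{ass_1}, ${\rm D}\mathcal{A}(X)[XT]=\frac{1}{2}X(T-\psi(T)^\top)$ for $T\in\mathcal{G}$, so its kernel is $\{XT: T=\psi(T)^\top\}$. This coincides with $N_X\mathcal{M}=\{\phi(X)\Phi(T)\}$ when $\phi=\mathrm{id}$, but not, e.g., for $\phi(X)=BX$. Your planned check via \eqref{m1} with $Z=\phi(X)\Phi(T)$ stalls on $\phi(\phi(X)\Phi(T))$, which the assumptions do not control.

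Consequently the cross term equals $2\beta\langle\nabla f(X),{\rm D}\mathcal{A}(X)[\phi(X)\Phi(C(Y))]\rangle+O(\|Y-X\|\,\|C(Y)\|)$. Its leading part is a nonzero linear functional of $C(Y)$ that takes both signs, so your bound $c_1M_{X,f}L_\phi\|Y-X\|\,\|C(Y)\|$ fails.

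Here is a concrete instance. Take $n=2$, $p=1$, $B=\mathrm{diag}(1,4)$, $x=(1/\sqrt{2},\,1/(2\sqrt{2}))^\top\in\mathcal{M}$, and $f(y)=\langle G,y\rangle$ with $G=(-1,0)^\top$. Then ${\rm D}\mathcal{A}(x)[Bx]=Bx-\frac{5}{2}x=(-\frac{3}{2\sqrt{2}},\frac{3}{4\sqrt{2}})^\top$. Along $y=(1-t)x$ one gets $\|\nabla h(y)\|^2-\|\nabla g(y)\|^2=-6\sqrt{2}\,\beta t+O(t^2)<0$ for small $t>0$. So even $\|\nabla h\|^2\ge\|\nabla g\|^2$ fails, and your exponent-$2$ version is not a valid correction for the class of problems the paper covers.

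That version can hold only when ${\rm D}\mathcal{A}(X)$ is self-adjoint (e.g.\ $\phi=\mathrm{id}$), and even then two steps need fixing:
\begin{enumerate}
\item Lemma~\ref{le:cbo} cannot hold with the fixed center $X$. Whenever $\dim\mathcal{M}>0$, any $Y\in\mathcal{M}\cap\Omega_X$ with $Y\neq X$ violates its right-hand inequality. You must linearize at a nearest point of $\mathcal{M}$ to $Y$ and use $\mathrm{dist}(Y,\mathcal{M})$ instead of $\|Y-X\|$.
\item Your requirement $\frac{\beta\delta_{C,X}^2}{8}\ge\frac{4c_1M_{X,f}L_\phi}{\delta_{C,X}}$ scales like $\delta_{C,X}^{-3}$ with an unspecified constant $c_1$. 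It is therefore not implied by the first entry of $\tilde\beta_X$.
\end{enumerate}
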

\begin{proof}
  For any $ Y \in \Omega_X $, by definition of $ \nabla g(Y) $ in \eqref{eq:ng}, we have
  \begin{align*}
     & \|\nabla g(Y)\| \| {\rm D}C(Y)^*[C(Y)] \| \\
    = & \left(\left\| \nabla f(\mathcal{A}(X))\left(\frac{3}{2}I_p  -
    \frac{1}{2}Y^\top \phi(Y) \right)  - \frac{1}{2} {\rm D}C(Y)^*[\nabla f(\mathcal{A}(X))^\top Y] \right\|  \right) \| {\rm D}C(Y)^*[C(Y)] \|\\
    \leq & \left(\left\| \nabla f(\mathcal{A}(X))\left(\frac{3}{2}I_p  -
    \frac{1}{2}Y^\top \phi(Y) \right) \right\|  + \frac{1}{2}\| {\rm D}C(Y)^*[\nabla f(\mathcal{A}(X))^\top Y] \|  \right) \| {\rm D}C(Y)^*[C(Y)] \|\\
    < & \left( 1 + \frac{1}{6}\alpha_\mathcal{A} L_\phi + \frac{1}{9}\alpha_\mathcal{A}^2 L_\phi \right) \alpha_\mathcal{A} L_\phi M_{X,f} \| C(Y) \|.
  \end{align*}
  It follows from Proposition \ref{pro:nah} that
  \begin{align*}
    \|\nabla h(Y)\|^2 = & \| \nabla g(Y)\|^2 + 2\beta \left\langle \nabla g(Y), {\rm D}C(Y)^*[C(Y)] \right\rangle + \beta^2\| {\rm D}C(Y)^*[C(Y)] \|^2 \\
    \geq & \| \nabla g(Y)\|^2 + \beta^2\| {\rm D}C(Y)^*[C(Y)] \|^2 - 2\beta \|\nabla g(Y)\| \| {\rm D}C(Y)^*[C(Y)] \| \\
    \geq & \| \nabla g(Y)\|^2 + \beta^2 \delta_{C,X}^2 \| C(Y) \|^2 - 2\beta \|\nabla g(Y)\| \| {\rm D}C(Y)^*[C(Y)] \| \\
    \geq & \| \nabla g(Y)\|^2 + \beta^2 \delta_{C,X}^2(1 - \|Y^\top \phi(Y)\|)\| C(Y) \| \\
    & - 2\beta \left( 1 + \frac{1}{6}\alpha_\mathcal{A} L_\phi + \frac{1}{9}\alpha_\mathcal{A}^2 L_\phi \right) \alpha_\mathcal{A} L_\phi M_{X,f} \| C(Y) \| \\
    > & \| \nabla g(Y)\|^2 + \frac{\beta}{9}\| C(Y) \|.
  \end{align*}
  Here, the last inequality uses the fact that $ \beta \geq \tilde{\beta}_X $.
\end{proof}

Proposition \ref{pro:con} shows that as the penalty parameter $ \beta $ increases, the solution to \ref{cdf} obtained via an unconstrained optimization method exhibits increasingly higher feasibility accuracy. In the following, we analyze how the post-processing steps (Steps 3–5) of Algorithm \ref{al:1} influence the value of the CDF objective function.

\begin{proposition}
  For any given $ X \in \mathcal{M} $, suppose $ \beta \geq \tilde{\beta}_X $ and $ Y \in \Omega_X $, then, it holds that
  \begin{equation*}
    h(\mathcal{A}(Y)) \leq h(Y) - \frac{\beta}{8}\| C(Y) \|^2.
  \end{equation*}
\end{proposition}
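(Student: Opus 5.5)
We split $h(\mathcal{A}(Y)) - h(Y)$ into the change of the objective part and the change of the penalty part:
\begin{equation*}
  h(\mathcal{A}(Y)) - h(Y) = \left[ f(\mathcal{A}(\mathcal{A}(Y))) - f(\mathcal{A}(Y)) \right] + \frac{\beta}{2}\left( \| C(\mathcal{A}(Y)) \|^2 - \| C(Y) \|^2 \right).
\end{equation*}
We bound the two brackets separately and show that the penalty decrease dominates.

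\textbf{Penalty bracket.} By Theorem \ref{them:noe},
\begin{equation*}
  \| C(\mathcal{A}(Y)) \| \leq \frac{8\alpha_{C,\mathcal{A}}L_\phi}{\delta_{C,X}^2}\| C(Y) \|^2.
\end{equation*}
Combining Lemma \ref{le:cbo} with $\| Y - X\| \leq \varepsilon_X \leq \frac{3\delta_{C,X}^2}{32\alpha_{\mathcal{A}}\alpha_{C,\mathcal{A}}L_\phi^2}$ gives
\begin{equation*}
  \| C(Y) \| \leq \frac{\alpha_{\mathcal{A}}L_\phi}{\alpha_{C,\mathcal{A}}\cdot 8 L_\phi} \cdot \frac{\delta_{C,X}^2}{4 \alpha_{\mathcal{A}} L_\phi}\cdot(\text{const}),
\end{equation*}
so that $\frac{8\alpha_{C,\mathcal{A}}L_\phi}{\delta_{C,X}^2}\| C(Y)\| \leq \frac12$. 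Hence $\| C(\mathcal{A}(Y))\| \leq \frac12 \| C(Y)\|$, and
\begin{equation*}
  \frac{\beta}{2}\left( \| C(\mathcal{A}(Y)) \|^2 - \| C(Y) \|^2 \right) \leq -\frac{3\beta}{8}\| C(Y) \|^2.
\end{equation*}
This is exactly the role the last entry of $\varepsilon_X$ appears designed to play. Along the way we also need $\mathcal{A}(Y)$ to lie in a neighborhood where the constants apply. We get this from $\|\mathcal{A}(Y) - Y\| = \frac12\|Y C(Y)\| \leq \frac{\alpha_{\mathcal{A}}}{6}\|C(Y)\|$, which is small.

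\textbf{Objective bracket.} Writing $W = \mathcal{A}(Y)$, the bracket equals $f(\mathcal{A}(W)) - f(W)$. We use the mean value theorem along the segment between $W$ and $\mathcal{A}(W)$:
\begin{equation*}
  | f(\mathcal{A}(W)) - f(W) | \leq M \,\| \mathcal{A}(W) - W \| = \frac{M}{2}\| W\, C(W) \| \leq \frac{M\alpha_{\mathcal{A}}}{6}\| C(\mathcal{A}(Y)) \|.
\end{equation*}
Here $M$ is a bound on $\|\nabla f\|$ near $X$. We take $M = M_{X,f}$, after checking that the relevant points are images under $\mathcal{A}$ of points of $\Delta_X$, or else use local Lipschitzness of $f$ on the compact set $\Delta_X$. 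Applying Theorem \ref{them:noe} once more, the objective bracket is bounded by
\begin{equation*}
  \frac{M_{X,f}\alpha_{\mathcal{A}}}{6}\cdot\frac{8\alpha_{C,\mathcal{A}}L_\phi}{\delta_{C,X}^2}\| C(Y)\|^2 = \frac{4\alpha_{\mathcal{A}}\alpha_{C,\mathcal{A}}L_\phi M_{X,f}}{3\delta_{C,X}^2}\| C(Y) \|^2 .
\end{equation*}
Since $\beta \geq \tilde{\beta}_X \geq \frac{16\alpha_{\mathcal{A}}\alpha_{C,\mathcal{A}}L_\phi M_{X,f}}{3\delta_{C,X}^2}$, this is at most $\frac{\beta}{4}\|C(Y)\|^2$. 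Therefore
\begin{equation*}
  h(\mathcal{A}(Y)) - h(Y) \leq -\frac{3\beta}{8}\|C(Y)\|^2 + \frac{\beta}{4}\|C(Y)\|^2 = -\frac{\beta}{8}\|C(Y)\|^2,
\end{equation*}
which is the claim. The matching of the second threshold in $\tilde{\beta}_X$ with this computation strongly suggests this is the intended route.

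\textbf{Main obstacle.} The delicate step is the first-order Taylor comparison of $f\circ\mathcal{A}$ at $W$ and $\mathcal{A}(W)$. A cruder alternative is to compare $f(\mathcal{A}(W))$ with $f(\mathcal{A}(Y))$ directly via $\mathcal{A}(\mathcal{A}(Y))$. In either case one must ensure that the points $W$ and $\mathcal{A}(W)$ stay in $\Delta_X$, so that $M_{X,f}$ and the constants of Theorem \ref{them:noe} remain valid. The fallback is to use $\|\mathcal{A}(Y)-X\| \leq \|Y-X\| + \frac{\alpha_{\mathcal{A}}}{6}\|C(Y)\|$ together with $\varepsilon_X \leq \rho_X/2$. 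If the constants then fail to close exactly, absorb the slack into the generous definition of $\tilde{\beta}_X$.
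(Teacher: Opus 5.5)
Your proposal is correct and follows the paper's proof step for step. It uses the same split into $f(\mathcal{A}(\mathcal{A}(Y)))-f(\mathcal{A}(Y))$ plus the penalty difference, the bound $\|\mathcal{A}(W)-W\|\le\frac{\alpha_{\mathcal{A}}}{6}\|C(W)\|$ together with $M_{X,f}$, and Theorem~\ref{them:noe} both for $\|C(\mathcal{A}(Y))\|\le\frac12\|C(Y)\|$ and for the $\frac{4\alpha_{\mathcal{A}}\alpha_{C,\mathcal{A}}L_\phi M_{X,f}}{3\delta_{C,X}^2}\|C(Y)\|^2$ bound, and it closes with the second entry of $\tilde{\beta}_X$ via $-\frac{3\beta}{8}+\frac{\beta}{4}=-\frac{\beta}{8}$ (the paper also glosses over the containment issues you flag). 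Your displayed bound on $\|C(Y)\|$ is garbled; the clean version is $\|C(Y)\|\le\frac{2\alpha_{\mathcal{A}}L_\phi}{3}\varepsilon_X\le\frac{\delta_{C,X}^2}{16\alpha_{C,\mathcal{A}}L_\phi}$, from Lemma~\ref{le:cbo} and the last entry of $\varepsilon_X$, which gives the factor $\frac12$ exactly, whereas the paper merely asserts it.
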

\begin{proof}
  For any $ Y \in \Omega_X $, according to the definition of $ \mathcal{A}(Y) $ in \eqref{eq:A}, we have
  \begin{equation*}
    \|\mathcal{A}(Y) - Y \| =  \| \frac{3}{2}Y - \frac{1}{2}Y \phi(Y)^{\top} Y - Y \|   = \frac{1}{2}\| Y C(Y)^{\top} \|
    \leq \frac{1}{6}\alpha_\mathcal{A}\| C(Y) \|.
  \end{equation*}
  Combining with the continuity of the function $f$, we deduce that
  \begin{align*}
    \left| f(\mathcal{A}^2(Y)) -  f(\mathcal{A}(Y)) \right| \leq &  M_{X,f}\| \mathcal{A}^2(Y) - \mathcal{A}(Y) \| \\
    \leq & \frac{\alpha_\mathcal{A}M_{X,f}}{6}\|C(\mathcal{A}(Y)) \| \\
    \leq & \frac{4\alpha_\mathcal{A}\alpha_{C,\mathcal{A}}L_{\phi}M_{X,f}}{3\delta_{C,X}^2}\| C(Y) \|^2.
  \end{align*}
  From Theorem \ref{them:noe} we have $ \| C(\mathcal{A}(Y)) \| \leq \frac{8\alpha_{C,\mathcal{A}}L_{\phi}}{\delta_{C,X}^2}\| C(Y) \|^2 \leq \frac{1}{2}\| C(Y) \| $. Therefore,
  \begin{align*}
    h(\mathcal{A}(Y)) - h(Y) \leq & \left| f(\mathcal{A}^2(Y)) -  f(\mathcal{A}(Y)) \right| + \frac{\beta}{2}\left( \| C(\mathcal{A}(Y)) \|^2 - \| C(Y) \|^2 \right) \\
    \leq & - \left( \frac{3\beta}{8} - \frac{4\alpha_\mathcal{A}\alpha_{C,\mathcal{A}}L_{\phi}M_{X,f}}{3\delta_{C,X}^2} \right)\| C(Y) \|^2 \\
    \leq & -\frac{\beta}{8}\| C(Y) \|^2.
  \end{align*}
  The proof is completed.
\end{proof}
\subsection{Computational complexity analysis}
In this subsection, we will analyze the computational complexity of Algorithm \ref{al:1} when it is embedded with a first-order optimization method such as gradient descent or conjugate gradient. In fact, the computational cost primarily arises from computing the gradient of the objective function $ h(X) $. Let $ O_f $ denote the cost of computing $ \nabla f(X) $, and $ O_\phi $ the cost of computing $ \phi(X) $. The computational cost of the basic linear algebra operations and the total cost of computing the gradient of $ h(X) $ are summarized in Table \ref{Ta:cost}.

\begin{table}
\centering
  \caption{Computational complexity the first-order oracle in Algorithm \ref{al:1}}
  \resizebox{0.6\textwidth}{!}
  {\begin{tabular}{c|c|c}
  	\toprule
\multirow{5}{*}{Compute $\nabla f(\mathcal{A}(X))\left(\frac{3}{2}I_p - \frac{1}{2}X^{\top}\phi(X)\right) $} & $\phi(X)$ & $ O_\phi $ \\
&  $X^{\top}\phi(X)$ & $ 2np^2 $ \\
&  $ \mathcal{A}(X)=\frac{3}{2}X - \frac{1}{2}X\phi(X)^{\top}X $ &  $ 2np^2 $  \\
&  $ \nabla f(\mathcal{A}(X)) $ & $ O_f $    \\
& $ \nabla f(\mathcal{A}(X))\left(\frac{3}{2}I_p - \frac{1}{2}X^{\top}\phi(X)\right) $ & $ 2np^2 $   \\ \midrule
\multirow{4}{*}{Compute $\phi(X)\Phi\left(\nabla f(\mathcal{A}(X))^{\top}X\right)$ }&  $\nabla f(\mathcal{A}(X))^{\top}X$ & $ 2np^2 $ \\
&  $ \phi(X)X^{\top}\nabla f(\mathcal{A}(X)) $ &  $ 2np^2 $  \\
&  $ X \nabla f(\mathcal{A}(X))^{\top}X  $ & $ 2np^2 $    \\
&  $ \phi\left( X \nabla f(\mathcal{A}(X))^{\top}X \right) $ & $ O_\phi $    \\ \midrule
\multirow{3}{*}{Compute $ \phi(X)\Phi\left(X^{\top}\phi(X) - I_p\right)$ } &  $\phi(X) \left( \phi(X)^\top X - I_p \right) $ & $ 2np^2 $ \\
&  $ X \left(X^{\top}\phi(X) - I_p\right)  $ & $ 2np^2 $    \\
&  $ \phi\left( X \left(X^{\top}\phi(X) - I_p\right)\right) $ & $ O_\phi $    \\ \midrule
In total& \multicolumn{2}{c}{$ O_f + 3O_\phi + 16np^2 $} \\
 \bottomrule
  \end{tabular}}
\label{Ta:cost}
\end{table}

Next, we compare the computational complexity of Algorithm \ref{al:1} embedded in a first-order optimization method with that of a standard first-order Riemannian optimization method. According to \eqref{eq:rg}, computing the Riemannian gradient involves solving a least squares problem \eqref{eq:les}. The computational cost of this process can vary significantly depending on $ \phi $. For instance, on the Stiefel manifold, the least squares problem can be solved explicitly via matrix symmetrization, resulting in a computational cost of $ 2np^2 $. However, on the indefinite Stiefel manifold, solving a Lyapunov equation with a $ p \times p $ coefficient matrix is required, leading to a computational complexity on the order of $ \mathcal{O}(n^2p+p^3) $. For a general mapping $ \phi $, the least squares problem involved in computing the Riemannian gradient becomes equivalent to solving a standard linear least squares system
\begin{equation*}
  \min_{{\rm vec}{(T)}}\| M{\rm vec}{(T)} - {\rm vec}{(D)} \|,
\end{equation*}
where $ M \in \mathbb{R}^{np \times p^2} $. The computational complexity in this case can grow to $ \mathcal{O}(np^4+p^6) $, especially when the special structure of the manifold $ \mathcal{M} $ is not exploited. This computational overhead occurs not only during the computation of the Riemannian gradient but also in the vector transport step. In contrast, Algorithm \ref{al:1} effectively avoids these costly operations. A summary of the computational complexity comparison is provided in Table \ref{Ta:comp}, using the indefinite Stiefel manifold as a representative case.

\begin{table}[H]
\centering
  \caption{Comparison of the computational complexity of first-order methods between Riemannian optimization approaches and Algorithm \ref{al:1} (for the case of the Indefinite Stiefel manifold $ {\rm iSt}(p,n) $).}
  \resizebox{0.8\textwidth}{!}
  {\begin{tabular}{cc|cc}
  	\toprule
 \multicolumn{2}{c|}{\small Riemannian optimization approaches} & \multicolumn{2}{c}{\small Algorithm \ref{al:1} with embedded first-order optimization approaches} \\ \midrule
 Riemannian gradient \eqref{eq:rg} & $ O_f + \mathcal{O}(n^2p+p^3) + 8np^2 $ & Euclidean gradient \eqref{eq:eh} & $ O_f + 28np^2 $ \\
 Retraction & $ \mathcal{O}(n^3)$ & No retraction & -- \\
 Vector transport & $ \mathcal{O}(n^2p+p^3)$ & No vector transport & -- \\
 \bottomrule
  \end{tabular}}
\label{Ta:comp}
\end{table}

\section{Numerical experiments}\label{sec:5}
In this section, we present the numerical performance of solving the \ref{ocp} problem using the \ref{cdf} framework, in different unconstrained solvers. All the numerical experiments in this section are run in serial on a workstation with two Intel(R) Xeon(R) Gold 5317 CPU @ 3.00 GHz under Ubuntu 20.04.1. running Python 3.8.0, NumPy 1.26.4 and the CDOpt \cite{xiao2025cdopt} packages.

\subsection{Basic settings}
We select three representative problems for testing:
\begin{itemize}
  \item The least squares matching problem on the symplectic Stiefel manifold.
  \item The extrinsic mean problem on the indefinite Stiefel manifold.
  \item The tensor joint $f$-diagonalization problem on the third-order tensor Stiefel manifold.
\end{itemize}
The construction details for each test instance are provided in the corresponding subsections below.

In the experiments, we used various unconstrained optimization solvers from the SciPy package \cite{virtanen2020scipy} to minimize the \ref{cdf} function corresponding to each problem. These include the conjugate gradient method \cite{nocedal1999numerical}, the limit-memory BFGS method \cite{byrd1995limited}, and the Trust-NCG method \cite{nocedal1999numerical}. In addition, we applied the gradient descent method with alternating Barzilai–Borwein (BB) steps \cite{fletcher2005barzilai}. For simplicity, we refer to these methods as CDFCG, CDFLBFGS, CDFTR, and CDFGD, respectively.

For comparison, we also employed the Riemannian gradient descent (RGD) \cite{absil2008optimization} and Riemannian conjugate gradient (RCG) \cite{sato2022riemannian} methods to solve the corresponding \ref{ocp} problem. Notably, to the best of our knowledge, the PyManopt package \cite{townsend2016pymanopt} does not currently support the three manifold structures considered in this study. The limited availability of manifold optimization solvers, combined with the flexibility of our algorithmic framework to incorporate a broad range of Euclidean unconstrained optimization methods, further underscores the promising applicability and extensibility of our approach. The RGD method determines the step size using alternating Barzilai–Borwein (BB) steps combined with a nonmonotonic line search strategy, as described in \cite{iannazzo2018riemannian}. The parameters for the nonmonotonic line search follow the default settings adopted in OPtM \cite{wen2013feasible}. For the RCG method, we use the conjugate parameter $ \beta_{RCG} = \min\{ \beta^{R-FR},\beta^{R-DY} \} $, see \cite{sato2022riemannian} for details. The Riemannian geometric tools are selected as follows: the Cayley transform is adopted as the retraction mapping for both the Symplectic Stiefel manifold and the indefinite Stiefel manifold. Vector transport is performed via orthogonal projection, which requires solving a Lyapunov equation at each iteration. We solve this equation using a direct method \cite{sorensen2003direct}, as recommended in \cite{van2024riemannian}. For the third-order tensor Stiefel manifold, we employ the retraction mapping based on the t-QR decomposition \cite{kilmer2013third}, and the vector transport is also implemented via orthogonal projection. To ensure a fair comparison, the same initial point is chosen for all methods. We terminate these methods when the maximum number of iterations exceeds $100000$ or the CPU time limit of $ 1800 $ seconds, and the gradient tolerance is set to two criteria, $10^{-5}$ and $10^{-9}$.

\subsection{Least squares matching problem for symplectic Stiefel manifold}
In this subsection, we integrate various unconstrained optimization methods into our proposed algorithmic framework to evaluate its adaptability to existing approaches. All comparisons and results are based on the least squares matching problem
\begin{equation*}
  \begin{aligned}
  &\min_{X\in\mathbb{R}^{2n \times 2p}}&&f(X) = {\rm tr}(X^{\top}AXN)\\
  &\mathrm{s.t.}&& X^{\top} J_{2n}X J_{2p}^{\top} = I_{2p},
  \end{aligned}
\end{equation*}
where $ J_{2n} = \left[ \begin{matrix}
                          0 & I_n \\
                          -I_n & 0
                        \end{matrix} \right] $, $ A \in \mathbb{R}^{2n \times 2n} $ and $ N \in \mathbb{R}^{2p \times 2p} $.
This problem arises from the least squares matching problem on matrix Lie groups by Brockett
\begin{equation*}
  f(X):={\rm tr}(X^{\top}AXN - 2BX^{\top}),
\end{equation*}
where $ A,N,B $ are given matrices.

In this test, the matrix $ A \in \mathbb{R}^{2n \times 2n} $ is randomly generated by $ A = U\Lambda U^{\top} $, where $ U \in \mathbb{R}^{2n \times 2n} $ is an orthogonal matrix and $ \Lambda \in \mathbb{R}^{2n \times 2n}, N \in \mathbb{R}^{2p \times 2p} $ are diagonal matrices with $ \Lambda_{ii}=\lambda_{i}, \lambda_1 \geq \cdots \geq \lambda_{2n} \geq 0 $ and $ N_{jj} = \mu_j, \mu_1 \geq \cdots \geq \mu_{2p} \geq 0 $ as diagonal elements, respectively. The parameter $ \lambda_{i}, i = 1,2, \cdots, 2n $ determine the rate of decay of the eigenvalues of the matrix $A$, and $\mu_j, j = 1,2,\cdots , 2p$ represent the weights between the eigenvalues of the matrix $A$. We then set the penalty parameter to $ \beta = 0.012 $ to construct the corresponding \ref{cdf} function

Tables \ref{Ta1} and \ref{Ta2} present the numerical performance of all solvers on least squares matching problems of varying sizes. In these tables, ``Fval'', ``Iter'', ``Grad'', ``Feas'', and ``CPU time'' denote the final function value $ f(X^*) $, number of iterations, $\| \operatorname{grad}f(X^*) \|$, $\| C(X^*) \|$, and total runtime in seconds, respectively. Here $ X^* $ denotes the iteration point obtained when these methods stop.

The results in Tables \ref{Ta1} and \ref{Ta2} show that all solvers integrated with the \ref{cdf} framework achieve function values comparable to Riemannian optimization methods, while maintaining high feasibility accuracy. This confirms the effectiveness of our algorithmic framework, which provides greater flexibility for solving \ref{ocp} via unconstrained optimization. In addition, the CDF framework matches the efficiency of Riemannian solvers across different problem scales. For larger-scale problems, our exact penalty method surpasses Riemannian solvers in numerical performance, with CDFLBFGS exhibiting particularly strong results.

We evaluated the computational efficiency of our framework by comparing CDFGD, CDFCG, RGD, and RCG in 100-iteration experiments (Figure \ref{fig:1}). The RGD method allocates over $90\%$ of its CPU time to retraction and orthogonal projection operations. In contrast, CDFGD and CDFCG avoid these geometric computations entirely, lowering per-iteration costs significantly.
The RCG method incurs even higher computational time due to the additional need to perform vector transport, which involves solving a Lyapunov equation of size $ 2p \times 2p $. This overhead becomes increasingly significant as the problem dimension $ p $ grows.

\begin{table}[H]
\centering
  \caption{The numerical results of the least squares matching problem with fixed $ 2p = 10 $. The matrices $A$ and $N$ in the problem are generated by setting $ \lambda_i = a^{1-i} + b $ and $ \mu_j = 0.1*e^{-\frac{i}{p}} $ , respectively, where $ a > 1 $ and $ b \in (0,2) $ are randomly generated.}
  \resizebox{\textwidth}{!}
  {\begin{tabular}{cc|ccccc|ccccc}
  	\toprule
\multirow{2}{*}{$(2n,2p)$}& \multirow{2}{*}{Solver}& \multicolumn{5}{|c|}{{\rm tol} = 1e-5}&\multicolumn{5}{c}{{\rm tol} = 1e-9} \\ \cmidrule(lr){3-7} \cmidrule(l){8-12}
  & \multicolumn{1}{c|}{} & Fval & Iter & Grad & Feas & CPU time & Fval & Iter & Grad & Feas & CPU time  \\ \midrule
\multirow{6}{*}{$(100,10) $} & CDFGD & 6.29e-03  & 265    & 9.63e-06     & 3.19e-15     & 0.15  & 6.29e-03  & 863    & 8.83e-10     & 7.21e-16     & 0.49 \\
&  CDFCG & 6.29e-03  & 291    & 3.07e-06     & 1.71e-15     & 0.47 & 6.29e-03  & 760    & 5.75e-10     & 6.79e-16     & 1.17  \\
&  CDFLBFGS & 6.29e-03  & 173    & 7.14e-06     & 1.58e-15     & {\bf 0.14} & 6.29e-03  & 492    & 1.90e-10     & 8.08e-16     & {\bf 0.40} \\
&  CDFTR & 6.29e-03  & 12    & 2.85e-06     & 1.10e-15     & 0.79 & 6.29e-03  & 19    & 1.74e-10     & 8.30e-16     & 1.92 \\ \cmidrule(l){2-12}
& RGD & 6.29e-03  & 179    & 9.11e-06     & 1.64e-14     & 0.55 & 6.29e-03  & 953    & 8.50e-10     & 3.92e-14     & 1.91  \\
& RCG & 6.29e-03  & 251    & 8.67e-06     & 1.48e-14     & 1.12 & 6.29e-03  & 1210    & 8.79e-10     & 6.67e-14     & 5.74 \\ \midrule
\multirow{6}{*}{$(500,10) $} & CDFGD & 1.01e-03  & 299    & 8.59e-06     & 3.64e-14     & {\bf 0.30}  & 1.01e-03  & 4208    & 9.82e-10     & 1.24e-15     & 3.09  \\
&  CDFCG & 1.01e-03  & 514    & 3.79e-06     & 2.69e-14     & 1.78 & 1.01e-03  & 2421    & 7.58e-10     & 8.60e-16     & 7.45 \\
&  CDFLBFGS & 1.01e-03  & 321    & 8.43e-06     & 8.08e-14     & 0.63 & 1.01e-03  & 1620    & 7.73e-11     & 9.21e-16     & {\bf 2.92} \\
&  CDFTR & 1.01e-03  & 22    & 2.69e-06     & 1.04e-14     & 1.04 & 1.01e-03  & 45    & 4.91e-10     & 1.75e-15     & 5.87 \\ \cmidrule(l){2-12}
& RGD & 1.01e-03  & 138    & 9.64e-06     & 1.80e-14     & 6.72 & 1.01e-03  & 1817    & 9.88e-10     & 1.76e-14     & 104.63 \\
& RCG & 1.01e-03  & 208    & 8.52e-06     & 2.55e-14     & 13.99 & 1.01e-03  & 2423    & 9.88e-10     & 1.76e-14     & 162.13 \\ \midrule
\multirow{6}{*}{$(1000,10) $} & CDFGD & 4.83e-04  & 327    & 9.59e-06     & 9.18e-14     & {\bf 0.45}  & 4.77e-04  & 13926    & 9.89e-10     & 1.65e-15     & 12.37 \\
&  CDFCG & 4.82e-04  & 553    & 7.88e-06     & 1.01e-14     & 2.58 & 4.77e-04  & 5093    & 9.32e-10     & 1.83e-15     & 21.49 \\
&  CDFLBFGS & 4.84e-04  & 384    & 9.68e-06     & 8.78e-14     & 1.14 & 4.77e-04  & 3034    & 1.38e-10     & 1.44e-15     & {\bf 8.65} \\
&  CDFTR & 4.82e-04  & 29    & 7.35e-06     & 5.55e-14     & 2.51 & 4.77e-04  & 69    & 8.20e-10     & 1.70e-15     & 32.47 \\ \cmidrule(l){2-12}
& RGD &  4.91e-04  & 110    & 9.95e-06     & 9.62e-14     & 9.63  & 4.77e-04  & 3868    & 9.21e-10     & 2.28e-14     & 293.33  \\
& RCG & 4.88e-04  & 137    & 9.76e-06     & 1.54e-14     & 14.47 & 4.77e-04  & 7282    & 9.43e-10     & 1.82e-14     & 816.34 \\
 \bottomrule
  \end{tabular}}
\label{Ta1}
\end{table}

\begin{table}[H]
\centering
  \caption{The numerical results of the least squares matching problem with fixed $ 2n = 1000 $.  The matrices $A$ and $N$ in the problem are generated by setting $ \lambda_i = a^{1-i} + b $ and $ \mu_j = 0.1*e^{-\frac{i}{p}} $ , respectively, where $ a > 1 $ and $ b \in (0,2) $ are randomly generated.}
  \resizebox{\textwidth}{!}
  {\begin{tabular}{cc|ccccc|ccccc}
  	\toprule
\multirow{2}{*}{$(2n,2p)$}& \multirow{2}{*}{Solver}& \multicolumn{5}{|c|}{{\rm tol} = 1e-5}&\multicolumn{5}{c}{{\rm tol} = 1e-9} \\ \cmidrule(lr){3-7} \cmidrule(l){8-12}
  & \multicolumn{1}{c|}{} & Fval & Iter & Grad & Feas & CPU time & Fval & Iter & Grad & Feas & CPU time  \\ \midrule
\multirow{6}{*}{$(1000,10) $} & CDFGD & 4.79e-04  & 249    & 9.79e-06     & 1.07e-14     & {\bf 0.35}  & 4.62e-04  & 13722    & 9.67e-10     & 1.27e-15     & 13.34  \\
&  CDFCG & 4.71e-04  & 390    & 4.74e-06     & 1.40e-14     & 1.86 & 4.62e-04  & 5627    & 9.14e-10     & 1.31e-15     & 24.25 \\
&  CDFLBFGS & 4.77e-04  & 293    & 9.34e-05     & 7.35e-14     & 0.89 & 4.62e-04  & 3046    & 1.18e-10     & 1.05e-15     & {\bf 8.51} \\
&  CDFTR & 4.62e-04  & 29    & 2.79e-06     & 8.28e-14     & 5.68 & 4.62e-04  & 49    & 1.66e-10     & 1.62e-15     & 17.14 \\ \cmidrule(l){2-12}
& RGD & 4.77e-04  & 125    & 9.98e-06     & 1.72e-14     & 9.56 & 4.62e-04  & 3970    & 9.86e-10     & 6.93e-14     & 297.96  \\
& RCG & 4.77e-04  & 144    & 9.35e-06     & 1.65e-14     & 14.89 & 4.62e-04  & 6623    & 9.23e-10     & 2.69e-14     & 684.82 \\ \midrule
\multirow{6}{*}{$(1000,50) $} & CDFGD & 5.16e-03  & 671    & 9.91e-06     & 4.26e-14     & {\bf 7.80}  & 5.13e-03  & 19181    & 9.93e-10     & 3.90e-15     & 230.73  \\
&  CDFCG & 5.16e-03  & 300    & 7.49e-06     & 6.90e-14     & 13.96 & 5.13e-03  & 12782    & 8.43e-10     & 3.88e-15     & 595.83 \\
&  CDFLBFGS & 5.16e-03  & 531    & 8.44e-06     & 2.90e-14     & 11.06 & 5.13e-03  & 8123    & 2.17e-10     & 2.31e-15     & {\bf 169.19} \\
&  CDFTR & 5.15e-03  & 35    & 4.86e-06     & 7.55e-14     & 56.23 & 5.13e-03  & 67    & 1.66e-10     & 1.62e-15     & 458.14 \\ \cmidrule(l){2-12}
& RGD & 5.16e-03  & 214    & 9.96e-06     & 7.16e-14     & 22.18 & 5.13e-03  & 13186    & 9.88e-10     & 1.76e-14     & 1366.63 \\
& RCG & 5.15e-03  & 317    & 8.53e-06     & 9.35e-14     & 51.80 & -  & -    & -     & -     & $>1800$  \\ \midrule
\multirow{6}{*}{$(1000,100) $} & CDFGD & 1.56e-02  & 820    & 9.87e-06     & 1.60e-13     & {\bf 15.12}  & 1.56e-02  & 43992    & 9.48e-10     & 6.65e-15     & 812.15 \\
&  CDFCG & 1.56e-02  & 401    & 9.49e-06     & 1.05e-13     & 31.99 & 1.56e-02  & 14773    & 9.49e-10     & 6.18e-15    & 1178.49 \\
&  CDFLBFGS & 1.56e-02  & 511    & 9.31e-06     & 3.66e-13     & 21.33 &  1.56e-02  & 13702    & 2.52e-10     & 4.08e-15     & {\bf 571.94} \\
&  CDFTR & 1.56e-02  & 25    & 9.75e-06     & 7.94e-13     & 71.52 & 1.56e-02  & 143    & 2.45e-10     & 4.76e-15     & 1547.47 \\ \cmidrule(l){2-12}
& RGD &  1.56e-02  & 292    & 9.32e-06     & 1.50e-13     & 85.36  & -  & -    & -     & -     & $>1800$   \\
& RCG & 1.56e-02  & 354    & 9.67e-06     & 1.76e-13     & 312.73 & -  & -    & -     & -     & $>1800$  \\
 \bottomrule
  \end{tabular}}
\label{Ta2}
\end{table}

\begin{figure}[H]
\centering
\begin{subfigure}{\textwidth}
	\includegraphics[width=0.75\textwidth]{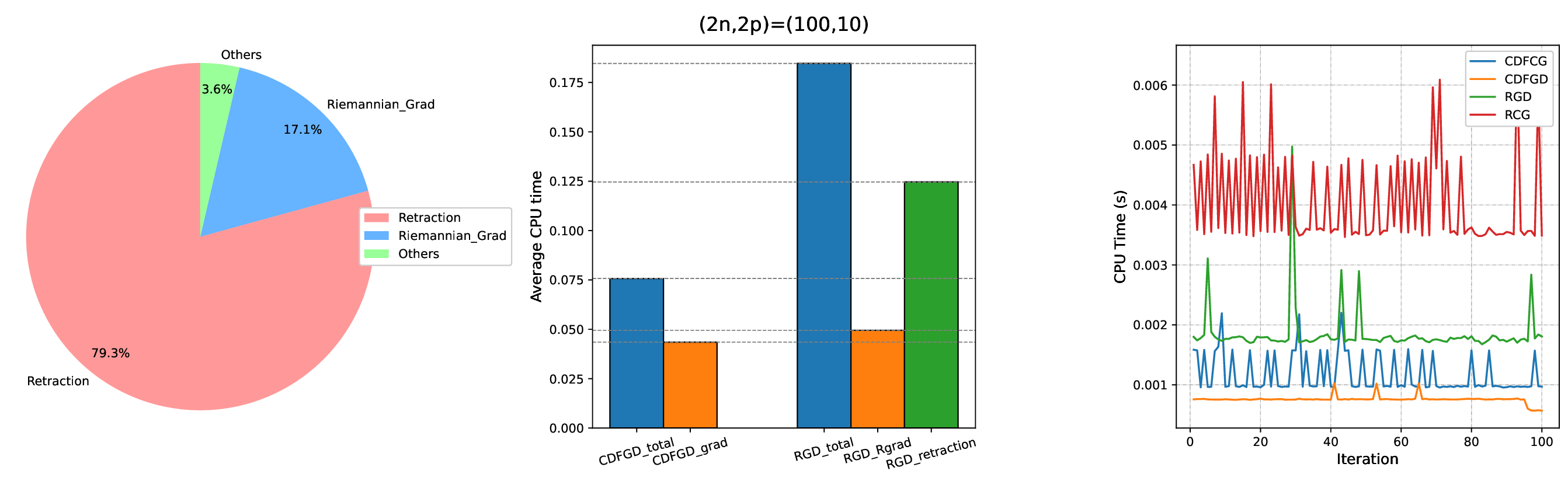}
\end{subfigure}
\begin{subfigure}{\textwidth}
	\includegraphics[width=0.75\textwidth]{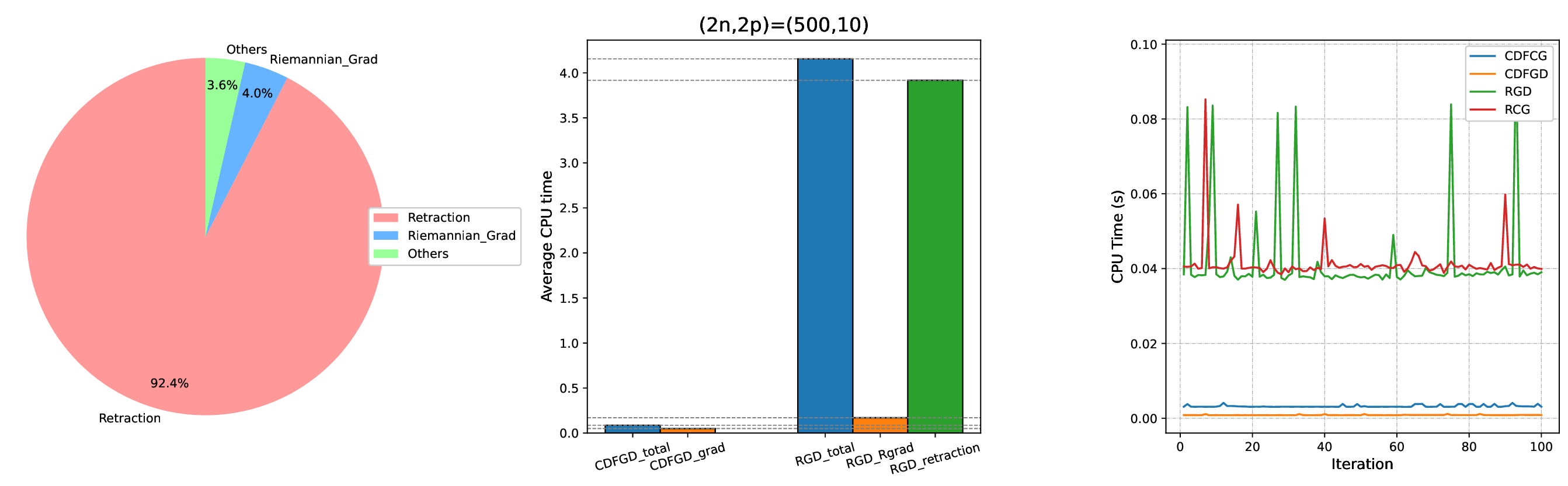}
\end{subfigure}
\begin{subfigure}{\textwidth}
	\includegraphics[width=0.75\textwidth]{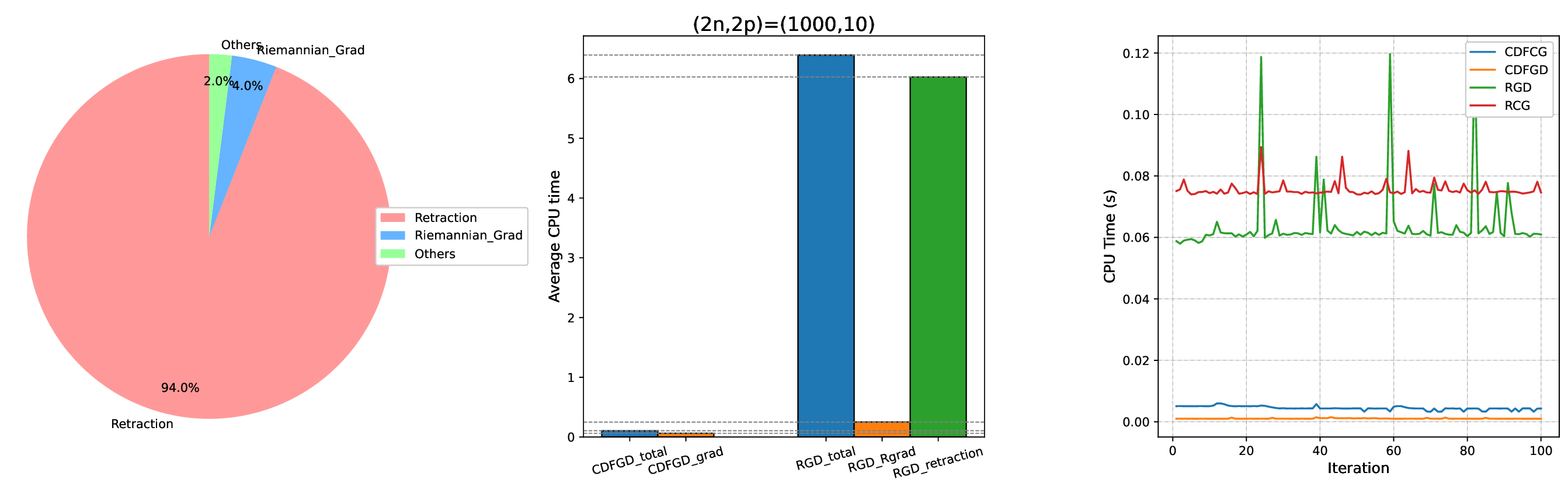}
\end{subfigure}
\begin{subfigure}{\textwidth}
	\includegraphics[width=0.75\textwidth]{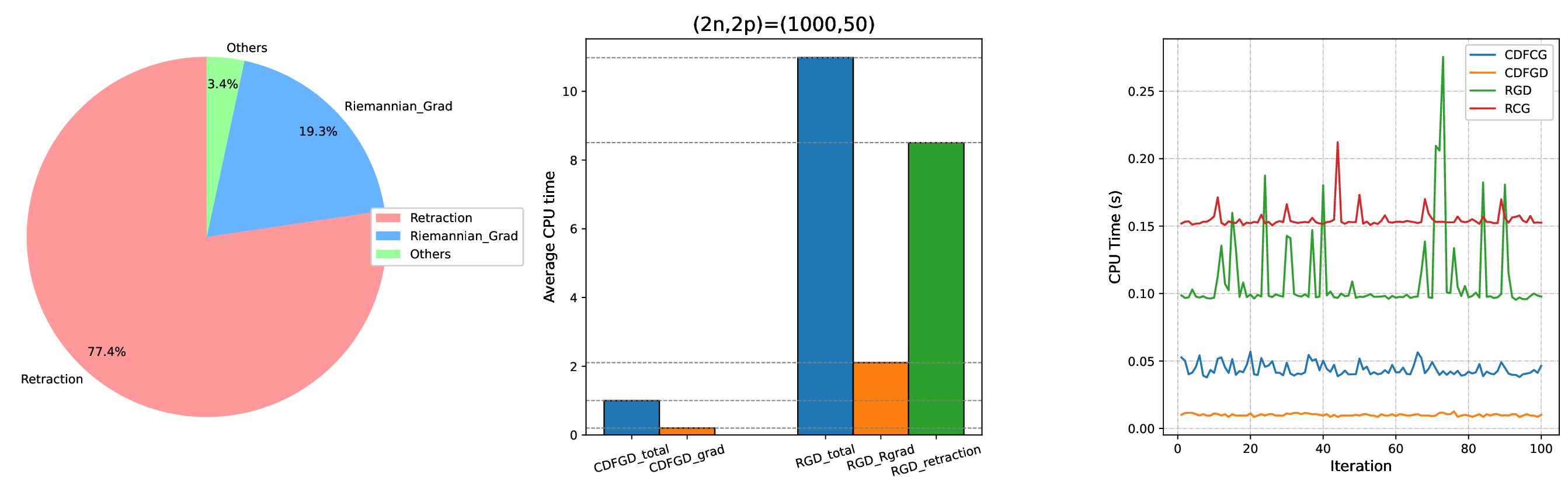}
\end{subfigure}
\begin{subfigure}{\textwidth}
	\includegraphics[width=0.75\textwidth]{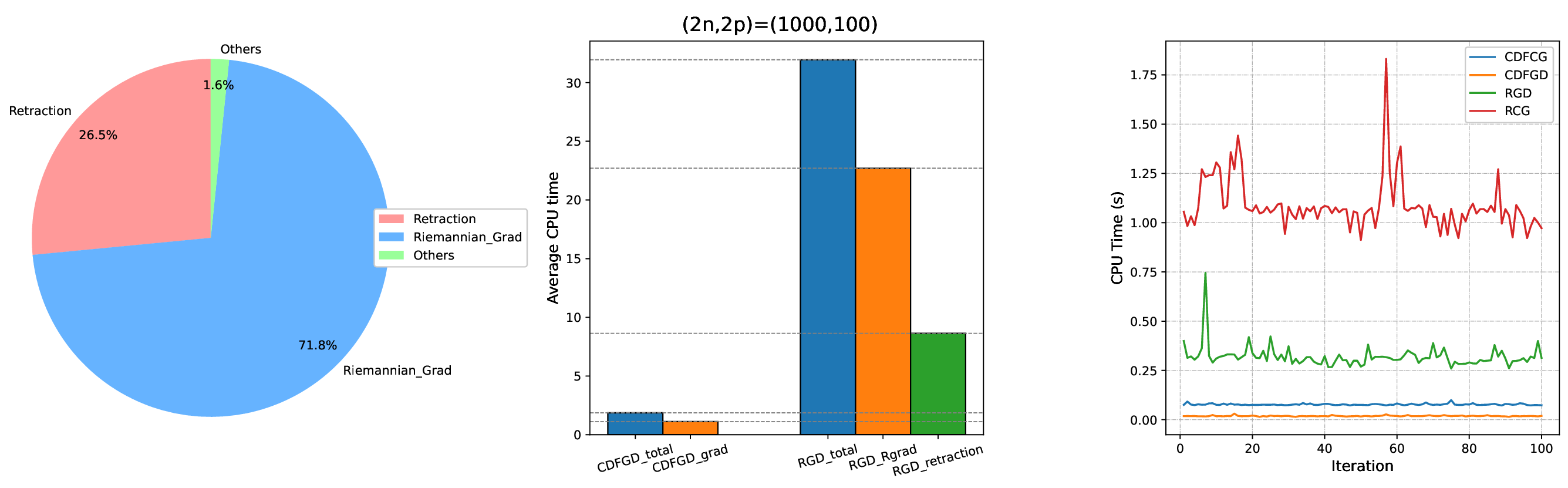}
\end{subfigure}
\caption{Computational time comparison for CDFGD, CDFCG, RGD, and RCG on least squares matching problems of varying sizes. Each method runs for 100 iterations. The left panel shows the percentage of time RGD spends computing the retraction and Riemannian gradient. The middle panel compares the total computation time, gradient computation time, and retraction computation time for CDFGD and RGD. For CDFGD, only the total and gradient computation times are shown, as it does not require retraction. The right panel presents the per-iteration computation time for all four methods.}
\label{fig:1}
\end{figure}


\subsection{Extrinsic mean problem for indefinite Stiefel manifold}
In this subsection we consider the extrinsic mean problem \cite{bhattacharya2003large} for indefinite Stiefel manifold
\begin{equation}\label{emp}
  \begin{aligned}
  &\min_{X\in\mathbb{R}^{n \times p}}&&\frac{1}{N}\sum_{i=1}^{N}\| X - X_i \|^2\\
  &\mathrm{s.t.}&&X^{\top} BXJ=I_p,
  \end{aligned}
\end{equation}
where all $ X_i $ satisfy $ X_{i}^{\top} BX_{i}J=I_p $ for $ i = 1, \cdots, N $, $B \in \mathbb{R}^{n \times n}$ is symmetric, nonsingular and $J \in \mathbb{R}^{p \times p}$ satisfying $ J^2 = I_p $. According to \cite{bhattacharya2003large}, problem \eqref{emp} can be described as a matrix approximation problem
\begin{equation*}\label{emp1}
  \begin{aligned}
  &\min_{X\in\mathbb{R}^{n \times p}}&&\| X - A \|^2\\
  &\mathrm{s.t.}&&X^{\top} BXJ=I_p,
  \end{aligned}
\end{equation*}
where $ A = \frac{1}{N}\sum_{i = 1}^{N} X_i $. In this experiment, we set $ B = \operatorname{diag}(1, \cdots, k, -m, \cdots, -1) $ with $ k+m = n $, $ J = \operatorname{diag}(I_{p_k}, I_{p_m}) $ and $  p_k + p_m = p $, $ p_k \leq k, p_m \leq m $.

In this experiment, all test cases are fixed to a sample size of $N = 1000$. Specifically, we randomly generate these samples around a central sample $ Y_0 $, i.e., $ X_i = Y_0 \left( \begin{matrix}
                                           W_1^{i} & 0 \\
                                           0 & W_2^{i}
                                         \end{matrix} \right) $, where $ Y_0^{\top} BY_0J=I_p $, $ W_1^i \in O_{p_k}$ and $ W_2^i \in O_{p_m} $. The penalty parameter is set to $ \beta = 0.5 $.
We first solve the problem \eqref{emp} using the CDFCG method to demonstrate the feasibility of the CDF framework for this problem. The scale is $ 1000 \times 100 $. We randomly select 100 samples and display both the initial residuals and the final residuals obtained by CDFCG in Figure \ref{fig:2}. As shown in the figure, the final solution produced by CDFCG method can effectively approximate all the samples, confirming the feasibility of the proposed approach.

To evaluate the numerical performance of all methods on this problem, we divide the tests into two groups:  (1) fixing $p$ while varying $ n $, and (2) fixing $ n $ while varying $ p $. Tables \ref{Ta3} and \ref{Ta4} report the numerical results of all solvers under two accuracy requirements. As observed from the tables, when the problem size is small, all solvers perform comparably. However, as the dimension increases (particularly $ p $ increases), the CDF-based methods begin to outperform the Riemannian optimization methods. Additionally, we record the time consumption for $100$ iterations of the CDFGD, CDFCG, RGD, and RCG methods in Figure \ref{fig:3}. The figure shows that with increasing $ p $, the time required to compute the Riemannian gradient grows rapidly, significantly impacting the efficiency of the Riemannian solvers. In particular, the RCG method becomes less efficient than RGD due to the additional cost of computing orthogonal projections. In contrast, the CDFGD and CDFCG methods remain consistently more efficient, both in terms of Euclidean gradient computation and overall per-iteration runtime.

\begin{figure}[H]
\centering
	\includegraphics[width=0.5\textwidth]{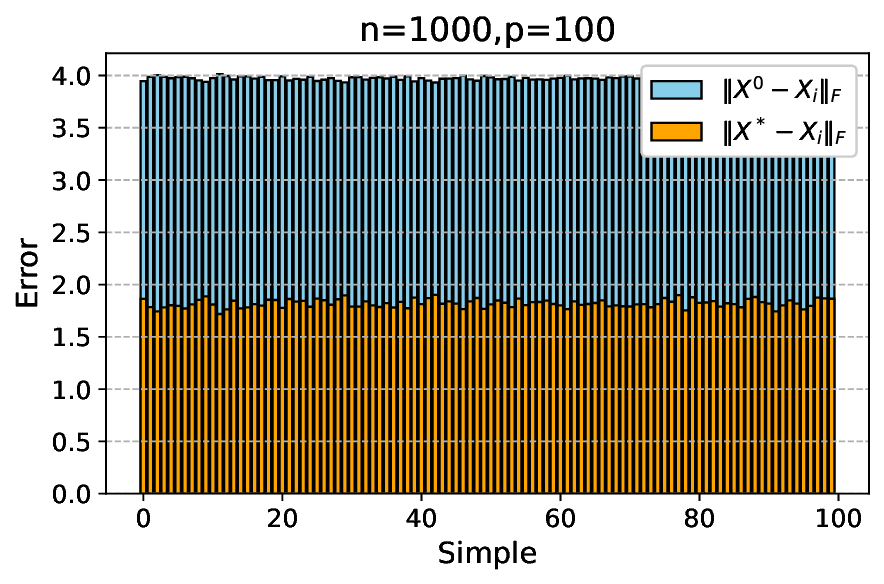}
\caption{$(n,p) = (1000,10)$. Comparison of initial and final errors of the CDFCG Method in solving the extrinsic mean problem for indefinite Stiefel manifold. A total of 100 randomly generated samples were taken. The blue bars represent the error between the initial iterate and each sample, while the orange bars depict the error between the convergence point and each sample.}
\label{fig:2}
\end{figure}

\begin{table}[H]
\centering
  \caption{The numerical results of the extrinsic mean problem with fixed $ p = 20 $. The number of samples $ N = 1000 $, and other parameters are set to $ k = 0.6n $ and $ p_k = 15 $.}
  \resizebox{\textwidth}{!}
  {\begin{tabular}{cc|ccccc|ccccc}
  	\toprule
\multirow{2}{*}{$(n,p)$}& \multirow{2}{*}{Solver}& \multicolumn{5}{|c|}{{\rm tol} = 1e-5}&\multicolumn{5}{c}{{\rm tol} = 1e-9} \\ \cmidrule(lr){3-7} \cmidrule(l){8-12}
  & \multicolumn{1}{c|}{} & Fval & Iter & Grad & Feas & CPU time & Fval & Iter & Grad & Feas & CPU time  \\ \midrule
\multirow{6}{*}{$(1000,20) $} & CDFGD & 3.74e-01  & 62    & 8.86e-06     & 1.01e-15     & {\bf 0.62}  & 3.74e-01  & 125    & 2.87e-10     & 1.02e-15     & {\bf 1.32} \\
&  CDFCG & 3.74e-01  & 40    & 8.61e-06     & 9.56e-15     & 0.79 & 3.74e-01  & 96    & 3.41e-10     & 1.27e-15     & 2.94  \\
&  CDFLBFGS & 3.74e-01  & 55    & 9.30e-06     & 1.32e-15     & 0.91 & 3.74e-01  & 145    & 3.87e-10     & 7.46e-16     & 2.31 \\
&  CDFTR & 3.74e-01  & 4    & 8.17e-05     & 7.86e-16     & 0.79 & 3.74e-01  & 6    & 1.88e-09     & 1.13e-15     & 1.65 \\ \cmidrule(l){2-12}
& RGD & 3.74e-01  & 5    & 9.11e-06     & 1.64e-14     & 0.83 & 3.74e-01  & 10    & 2.48e-10     & 5.34e-15     & 1.61 \\
& RCG & 3.74e-01  & 5    & 6.12e-06     & 4.92e-15     & 0.96 & 3.74e-01  & 11    & 2.87e-10     & 4.99e-15     & 1.92 \\ \midrule
\multirow{6}{*}{$(5000,20) $} & CDFGD & 3.70e-01  & 58    & 1.05e-06     & 1.18e-15     & 10.77 & 3.70e-01  & 153    & 8.58e-11     & 9.75e-16     & 24.43  \\
&  CDFCG & 3.70e-01  & 33    & 8.48e-06     & 8.17e-15     & 13.84  &  3.70e-01  & 121    & 2.30e-10     & 1.22e-15     & 47.37 \\
&  CDFLBFGS & 3.70e-01  & 56    & 6.84e-06     & 1.13e-15     & 12.84  & 3.70e-01  & 96    & 8.26e-10     & 1.07e-15     & 35.40 \\
&  CDFTR & 3.70e-01  & 3    & 5.53e-06     & 1.11e-15     & {\bf 6.57} & 3.70e-01  & 5    & 2.45e-10     & 1.25e-15     & {\bf 22.40} \\ \cmidrule(l){2-12}
& RGD & 3.70e-01  & 5    & 9.76e-06     & 4.37e-15     & 12.60 & 3.70e-01  & 10    & 2.21e-10     & 4.62e-15     & 25.15 \\
& RCG & 3.70e-01  & 6    & 7.02e-06     & 4.54e-15     & 16.80 & 3.70e-01  & 12    & 6.80e-10     & 5.33e-15     & 34.36 \\ \midrule
\multirow{6}{*}{$(10000,20) $} & CDFGD & 3.67e-01  & 94    & 3.44e-06     & 1.09e-15     & 35.10  & 3.67e-01  & 197    & 1.17e-10     & 1.35e-15     & {\bf 71.48} \\
&  CDFCG & 3.67e-01  & 86    & 4.11e-06     & 7.29e-16     & 69.68 & 3.67e-01  & 156    & 9.47e-10     & 7.86e-16     & 126.58 \\
&  CDFLBFGS &  3.67e-01  & 320    & 8.53e-06     & 1.18e-15     & 123.87 & 3.67e-01  & 595    & 1.09e-07     & 1.19e-15     & 229.63 \\
&  CDFTR & 3.67e-01  & 4    & 1.30e-04     & 2.27e-14     & {\bf 23.50} & 3.67e-01  & 20    & 5.99e-10     & 8.90e-16     & 434.35 \\ \cmidrule(l){2-12}
& RGD &  3.67e-01  & 5    & 3.42e-06     & 4.98e-15     & 61.66 & 3.67e-01  & 10    & 2.90e-10     & 6.38e-15     & 121.28 \\
& RCG &  3.67e-01  & 7    & 9.94e-06     & 3.85e-15     & 134.07 & 3.67e-01  & 13    & 8.02e-10     & 5.24e-15     & 171.90 \\
 \bottomrule
  \end{tabular}}
\label{Ta3}
\end{table}

\begin{table}[H]
\centering
  \caption{The numerical results of the extrinsic mean problem with fixed $ n = 1000 $. The number of samples $ N = 1000 $, and other parameters are set to $ k = 600 $ and $ p_k = 0.6p $.}
  \resizebox{\textwidth}{!}
  {\begin{tabular}{cc|ccccc|ccccc}
  	\toprule
\multirow{2}{*}{$(n,p)$}& \multirow{2}{*}{Solver}& \multicolumn{5}{|c|}{{\rm tol} = 1e-5}&\multicolumn{5}{c}{{\rm tol} = 1e-9} \\ \cmidrule(lr){3-7} \cmidrule(l){8-12}
  & \multicolumn{1}{c|}{} & Fval & Iter & Grad & Feas & CPU time & Fval & Iter & Grad & Feas & CPU time  \\ \midrule
\multirow{6}{*}{$(1000,10) $} & CDFGD & 2.37e-01  & 35    & 1.21e-06     & 7.37e-16     & {\bf 0.14}  & 2.37e-01  & 105    & 2.80e-11     & 9.42e-16     & {\bf 0.25} \\
&  CDFCG & 2.37e-01  & 26    & 3.71e-06     & 8.96e-16     & 0.16 & 2.37e-01  & 81    & 3.43e-10     & 5.55e-16     & 0.51  \\
&  CDFLBFGS & 2.37e-01  & 44    & 7.39e-06     & 5.21e-16     & 0.23  & 2.37e-01  & 157    & 5.74e-08     & 7.61e-16     & 0.62 \\
&  CDFTR & 2.37e-01  & 2    & 6.42e-06     & 3.53e-16     & 0.24 & 2.37e-01  & 4    & 1.82e-09     & 8.46e-16     & 0.36 \\ \cmidrule(l){2-12}
& RGD & 2.37e-01  & 5    & 5.92e-06     & 7.70e-16     & 0.40 & 2.37e-01  & 8    & 3.57e-11     & 1.29e-15     & 0.58  \\
& RCG & 2.37e-01  & 5    & 8.39e-06     & 1.72e-15     & 0.42 & 2.37e-01  & 10    & 6.28e-10     & 2.53e-15     & 0.90  \\ \midrule
\multirow{6}{*}{$(1000,500) $} & CDFGD & 9.42e-01  & 226    & 8.09e-06     & 1.07e-14     & 28.79  & 9.42e-01  & 542    & 7.36e-10     & 1.15e-14     & 69.24  \\
&  CDFCG & 9.42e-01  & 116    & 8.97e-06     & 1.08e-14     & 27.31 & 9.42e-01  & 288    & 9.44e-10     & 1.15e-14     & 60.97 \\
&  CDFLBFGS & 9.42e-01  & 85    & 9.85e-06     & 1.37e-14     & {\bf 19.87} &  9.42e-01  & 208    & 9.46e-10     & 1.15e-14     & {\bf 46.39} \\
&  CDFTR & 9.42e-01  & 4    & 4.26e-06     & 1.44e-14     & 18.49 & 9.42e-01  & 7    & 4.56e-13     & 1.15e-14     & 91.18 \\ \cmidrule(l){2-12}
& RGD & 9.42e-01  & 9    & 9.00e-06     & 7.66e-14     & 65.83 &  9.42e-01  & 16    & 9.43e-10     & 8.78e-14     & 114.27 \\
& RCG & 9.42e-01  & 6    & 5.71e-06     & 4.03e-14     & 107.32 & 9.42e-01  & 16    & 4.32e-10     & 7.12e-14     & 271.06 \\ \midrule
\multirow{6}{*}{$(1000,1000) $} & CDFGD & 4.18e+00  & 588    & 7.66e-06     & 1.87e-14     & 109.19  & 4.18e+00  & 1583    & 2.41e-10     & 1.81e-14     & {\bf 293.48} \\
&  CDFCG & 4.18e+00  & 794    & 2.03e-06     & 1.87e-14     & 234.83 & 4.18e+00  & 1283    & 5.11e-07     & 1.88e-14     & 374.64 \\
&  CDFLBFGS & 4.18e+00  & 236    & 9.55e-06     & 1.94e-14     & {\bf 92.31} & 4.18e+00  & 928    & 2.92e-07     & 1.84e-14     & 369.67 \\
&  CDFTR & 4.18e+00  & 5    & 5.38e-06     & 1.81e-14     & 96.53 & 4.18e+00  & 7    & 1.92e-10     & 1.82e-14     & 301.28 \\ \cmidrule(l){2-12}
& RGD &  4.18e+00  & 9    & 4.59e-06     & 8.27e-14     & 341.36  & 4.18e+00  & 18    & 9.00e-10     & 1.91e-13     & 696.28  \\
& RCG & 4.18e+00  & 8    & 5.96e-06     & 1.17e-13     & 697.88  & 4.18e+00  & 18    & 9.45e-10     & 1.45e-13     & 1596.28 \\
 \bottomrule
  \end{tabular}}
\label{Ta4}
\end{table}

\begin{figure}
\centering
\begin{subfigure}{\textwidth}
	\includegraphics[width=0.75\textwidth]{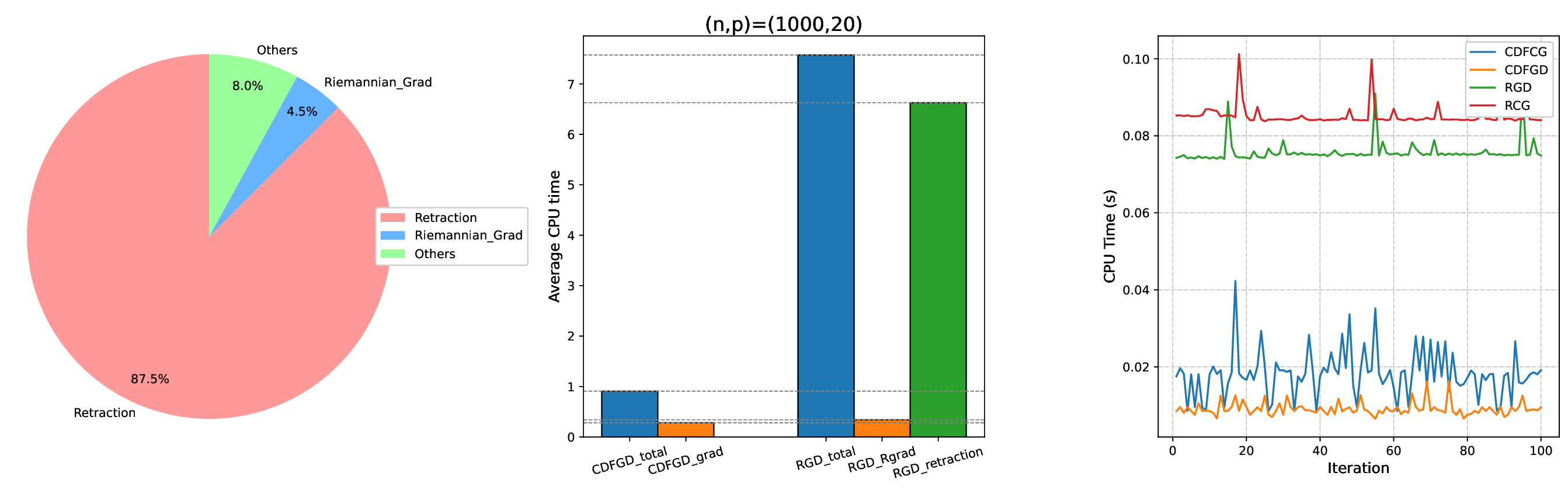}
\end{subfigure}
\begin{subfigure}{\textwidth}
	\includegraphics[width=0.75\textwidth]{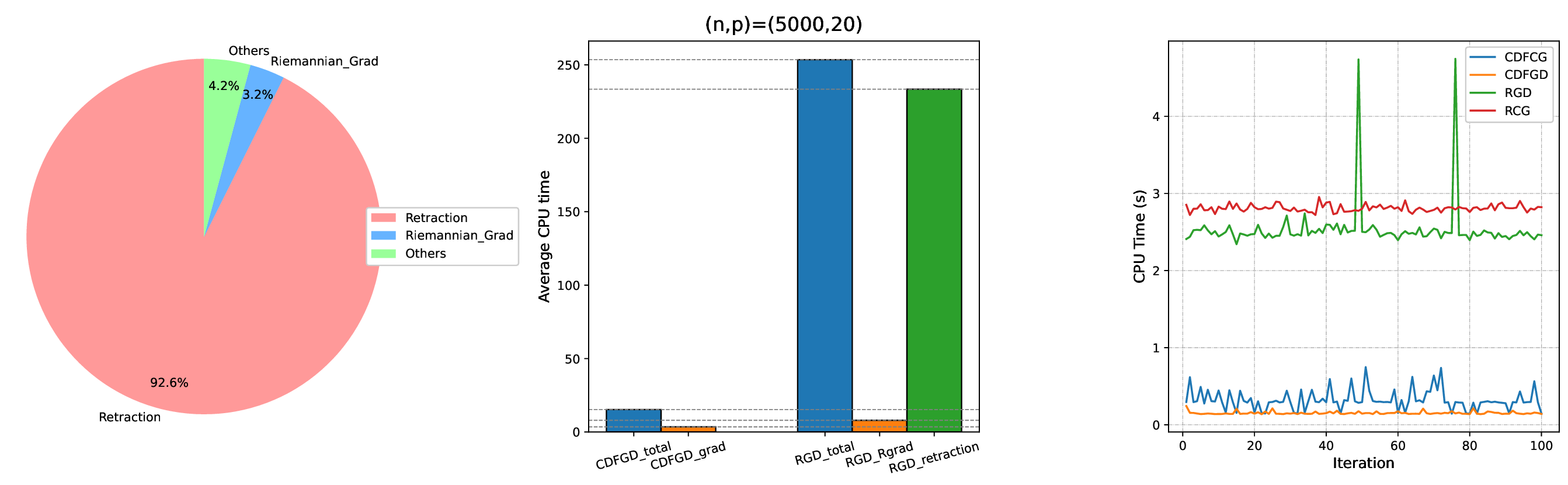}
\end{subfigure}
\begin{subfigure}{\textwidth}
	\includegraphics[width=0.75\textwidth]{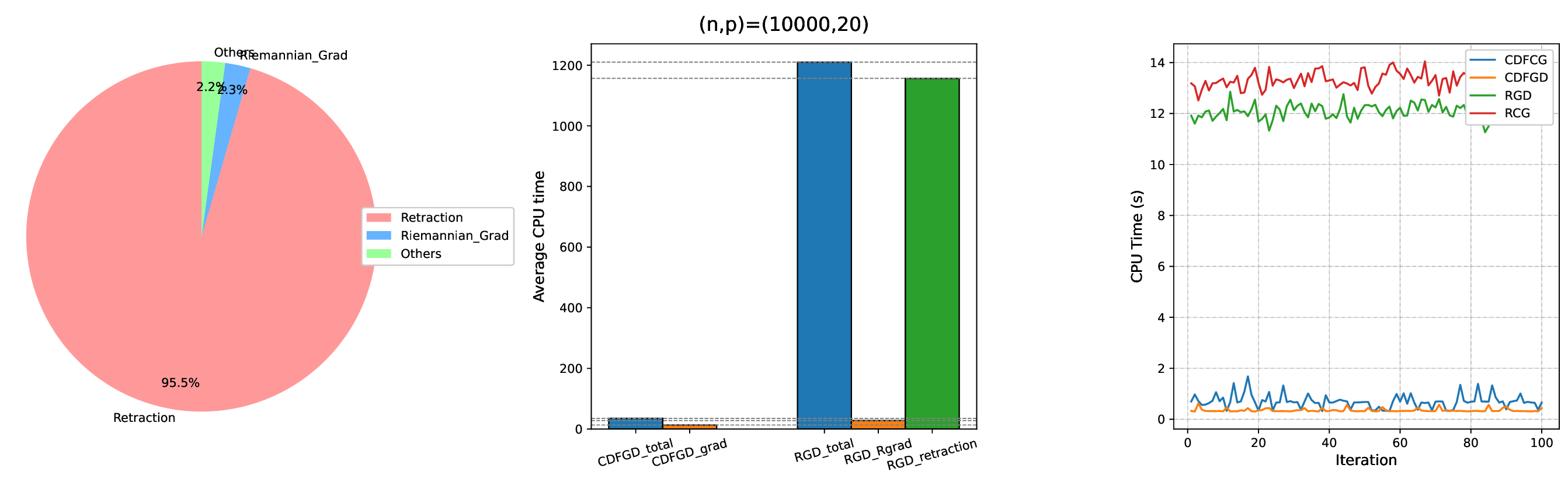}
\end{subfigure}
\begin{subfigure}{\textwidth}
	\includegraphics[width=0.75\textwidth]{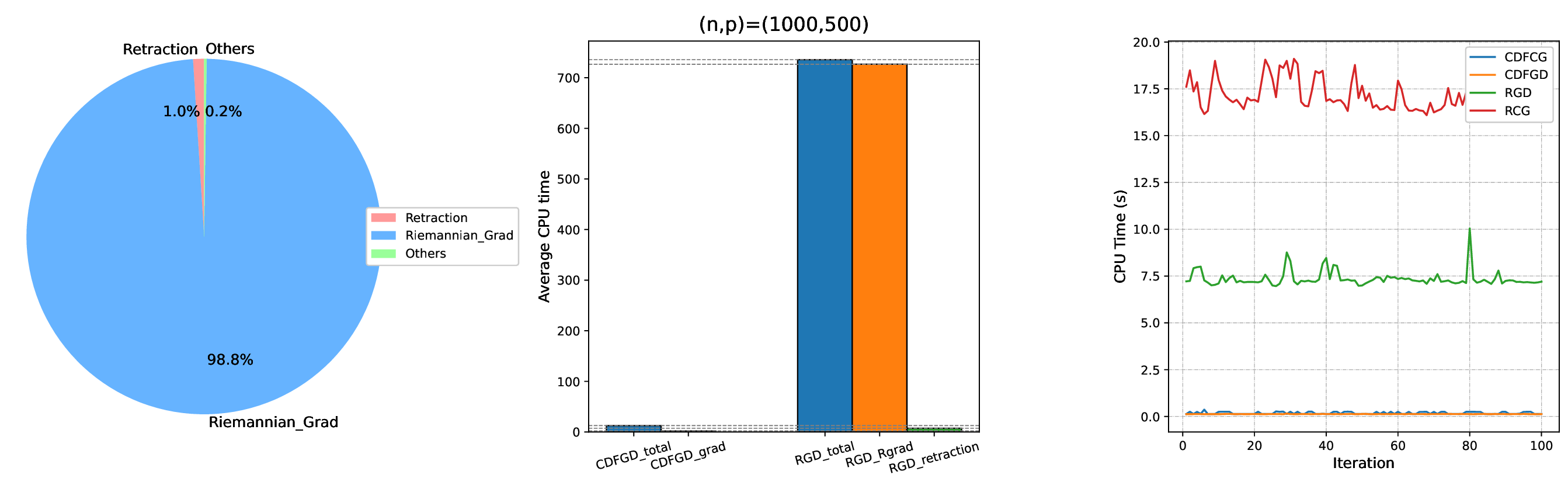}
\end{subfigure}
\begin{subfigure}{\textwidth}
	\includegraphics[width=0.75\textwidth]{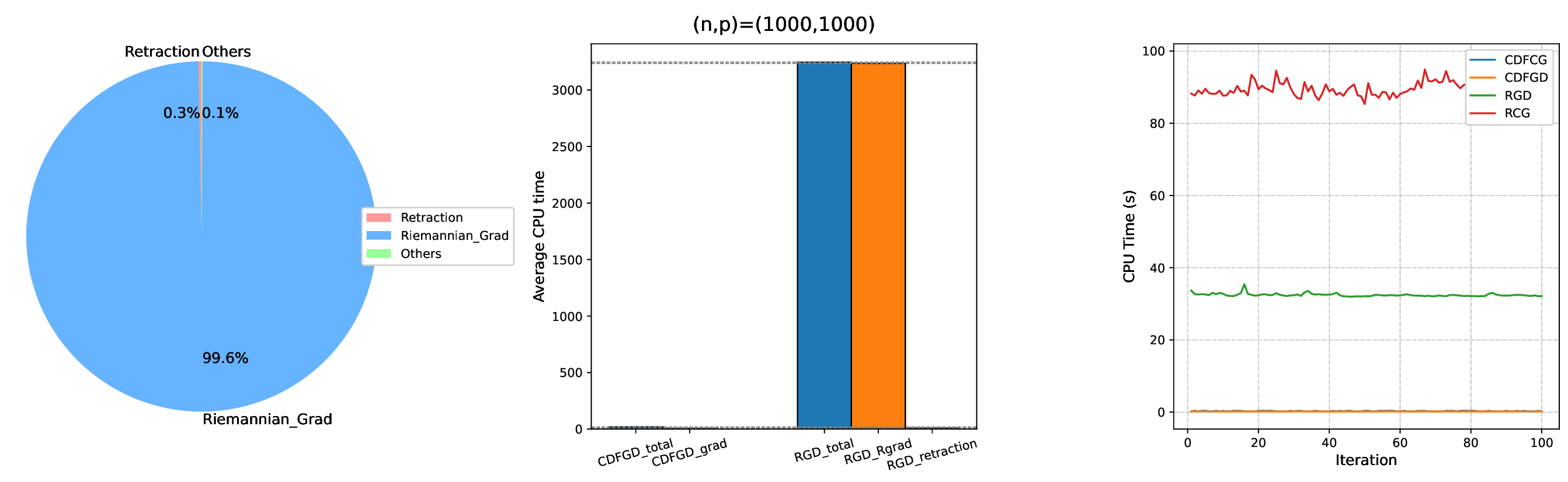}
\end{subfigure}
\caption{Computational time comparison for CDFGD, CDFCG, RGD, and RCG on extrinsic mean problems of varying sizes. Each method runs for 100 iterations. The left panel shows the percentage of time RGD spends computing the retraction and Riemannian gradient. The middle panel compares the total computation time, gradient computation time, and retraction computation time for CDFGD and RGD. For CDFGD, only the total and gradient computation times are shown, as it does not require retraction. The right panel presents the per-iteration computation time for all four methods.}
\label{fig:3}
\end{figure}

\subsection{Tensor joint f-diagonalization problem for third-order tensor Stiefel manifold}
In this subsection we consider the tensor joint f-diagonalization problem \cite{mao2024computation} for third-order tensor Stiefel manifold
\begin{equation}\label{tenpr1}
  \begin{aligned}
  &\min_{\mathscr{X}\in\mathbb{R}^{n \times p \times l}}&&f(\mathscr{X}) =  \sum_{i=i}^{N} \operatorname{off}\left( \mathscr{X}^{\top} *_c \mathscr{A}_i *_c \mathscr{X} \right) \\
  &\mathrm{s.t.}&&\mathscr{X}^{\top} *_c \mathscr{X}=\mathscr{I}_p,
  \end{aligned}
\end{equation}
where $ \mathscr{A}_i \in \mathbb{R}^{n \times n \times l} $ for $ i = 1, 2, \cdots, N $, $ \operatorname{off}(\mathscr{X}) = \sum_{i_3 = 1}^{l}\sum_{1\leq i_1 \neq i_2 \leq p}^{p} (x_{i_1i_2i_3})^2 $, $ \mathscr{I}_p \in \mathbb{R}^{p \times p \times l} $ and $ *_c $ denotes the cosine transform product \cite{kernfeld2015tensor}. The model is to seek a common third-order orthogonal tensor $ \mathscr{X} $ such that all sample tensors $ \{ \mathscr{A}_1, \cdots, \mathscr{A}_N \} $ are diagonalized as much as possible.

In this experiment, we set $ \mathscr{A}_i = \mathscr{U} *_c \mathscr{S}_i *_c \mathscr{U}^{\top} + \gamma\frac{\mathscr{E}_i}{\mathscr{E}_i}, i = 1, \cdots, N $, where $ \mathscr{U} \in\mathbb{R}^{n \times p \times l} $ is a orthogonal tensor, $ \mathscr{S}_i \in \mathbb{R}^{p \times p \times l} $ are randomly generated f-diagonal tensors, and $ \mathscr{E}_i \in \mathbb{R}^{n \times n \times l} $ are randomly generated noises with noise level $ \gamma $. The penalty parameter is set to $ \beta = 0.8 $. We conduct two sets of experiments to evaluate the numerical performance of evaluate the numerical performance of CDFGD, CDFCG, CDFLBFGS, CDFTR, RGD and RCG methods. In the first set, we fix $  (n,p,l) $, and vary the number of samples $N$. In the second set, we fix $N$ and vary the tensor size, which is further divided into two subgroups: (i) fixing $p,l$ and varying $n$, and (ii) fixing $n,l$ and varying $p$.

Tables \ref{Ta5}-\ref{Ta7} report the performance of each solver in solving problem \eqref{tenpr1}. As observed from the results, all compared solvers achieved sufficiently small objective function values, indicating that all solvers diagonalized each sample tensor as much as possible. Although retraction and vector transport are explicitly defined in this experiment for all Riemannian optimization methods, the CDFCG and CDFGD methods still achieve numerical performance comparable to that of the RCG and RGD methods. It is worth noting that the performance of the CDFLBFGS method consistently has a significant advantage over the Riemannian solvers in all tests.

\begin{table}[H]
\centering
  \caption{The numerical results for the tensor joint f-diagonalization problem with fixed $(n,p,l)=(100,5,10)$. The noise level is set at $ \gamma = 0.5 $.}
  \resizebox{\textwidth}{!}
  {\begin{tabular}{cc|ccccc|ccccc}
  	\toprule
\multirow{2}{*}{$N$}& \multirow{2}{*}{Solver}& \multicolumn{5}{|c|}{{\rm tol} = 1e-5}&\multicolumn{5}{c}{{\rm tol} = 1e-9} \\ \cmidrule(lr){3-7} \cmidrule(l){8-12}
  & \multicolumn{1}{c|}{} & Fval & Iter & Grad & Feas & CPU time & Fval & Iter & Grad & Feas & CPU time  \\ \midrule
\multirow{6}{*}{$N = 10$} & CDFGD &  4.54e-11 & 223    & 7.58e-06     & 2.03e-15     & 8.32 &  1.18e-18 & 563    & 8.71e-10     & 1.67e-15     & 21.02  \\
&  CDFCG & 6.82e-12 & 254    & 3.89e-06     & 1.76e-15     & 11.36 & 6.37e-19 & 562    & 7.19e-10     & 1.89e-15     & 25.25  \\
&  CDFLBFGS & 2.14e-11  & 99    & 9.04e-06     & 1.90e-15     & {\bf 3.19}  &2.70e-23  & 333    & 1.11e-11     & 1.90e-15     & {\bf 10.62} \\
&  CDFTR & 2.59e-13 & 3    & 7.79e-07     & 1.58e-15     & 7.51 & 1.00e-19 & 5    & 4.43e-10     & 1.63e-15     & 23.72 \\ \cmidrule(l){2-12}
& RGD & 3.96e-11  & 373     & 8.19e-06     & 2.38e-15     & 13.23 & 2.49e-18  & 578    & 9.71e-10     & 2.67e-15     & 20.41 \\
& RCG & 1.65e-10  & 398    & 9.81e-06     & 2.32e-15     & 14.19 &  1.21e-18  & 912    & 9.05e-10     & 2.19e-15     & 34.23 \\ \midrule
\multirow{6}{*}{$N = 50 $} & CDFGD & 2.91e-11 & 277    & 9.60e-06     & 1.84e-15     & 47.11  & 1.32e-21 & 619    & 4.49e-10     & 1.86e-15     & 104.86   \\
&  CDFCG &  3.31e-12 & 234    & 4.56e-06     & 1.79e-15     & 48.15 & 5.03e-21 & 571    & 8.05e-10     & 1.84e-15     & 116.87 \\
&  CDFLBFGS & 4.81e-12  & 128    & 9.08e-06     & 1.51e-15     & {\bf 19.32} & 1.11e-24  & 452    & 8.67e-12     & 1.86e-15     & {\bf 64.92} \\
&  CDFTR &  5.02e-14 & 3    & 6.94e-07     & 1.79e-15     & 58.76 & 4.17e-20 & 4    & 3.72e-10     & 1.96e-15     & 112.18 \\ \cmidrule(l){2-12}
& RGD & 1.99e-11  & 313    & 6.97e-06     & 2.61e-15     & 52.64 & 1.31e-18  & 867    & 8.53e-10     & 2.41e-15     & 146.85 \\
& RCG & 1.30e-10  & 422    & 9.50e-06     & 2.56e-15     & 75.34 & 3.36e-19  & 975    & 8.26e-10     & 2.75e-15     & 174.57 \\ \midrule
\multirow{6}{*}{$N = 100$} & CDFGD & 1.85e-11 & 316    & 9.00e-06     & 1.67e-15     & 106.15  &  8.20e-24 & 743    & 9.95e-11     & 1.93e-15     & 247.16 \\
&  CDFCG & 3.77e-13 & 269    & 3.23e-06     & 1.77e-15     & 112.88 & 2.14e-21 & 738    & 8.54e-10     & 1.66e-15     & 303.68 \\
&  CDFLBFGS & 1.51e-12  & 96    & 8.85e-06     & 2.02e-15     & {\bf 27.49} & 1.83e-24  & 388    & 1.10e-11     & 1.73e-15     & {\bf 110.27} \\
&  CDFTR & 1.38e-13 & 3    & 2.52e-06     & 1.79e-15     & 74.34 & 2.93e-24 & 5    & 4.84e-12     & 1.54e-15     & 261.34 \\ \cmidrule(l){2-12}
& RGD &  6.36e-11  & 313    & 9.03e-06     & 2.49e-15     & 108.03  & 3.65e-19  & 798    & 8.95e-10     & 1.99e-15     & 269.37   \\
& RCG &  2.93e-11  & 463    & 9.76e-06     & 2.69e-15     & 165.08  & 4.36e-19  & 814    & 9.66e-10     & 2.66e-15     & 291.72 \\
 \bottomrule
  \end{tabular}}
\label{Ta5}
\end{table}

\begin{table}[H]
\centering
  \caption{The numerical results for the tensor joint f-diagonalization problem with fixed $N=30$ and $ p = 5 $. The noise level is set at $ \gamma = 0.8 $.}
  \resizebox{\textwidth}{!}
  {\begin{tabular}{cc|ccccc|ccccc}
  	\toprule
\multirow{2}{*}{$(n,p,l)$}& \multirow{2}{*}{Solver}& \multicolumn{5}{|c|}{{\rm tol} = 1e-5}&\multicolumn{5}{c}{{\rm tol} = 1e-9} \\ \cmidrule(lr){3-7} \cmidrule(l){8-12}
  & \multicolumn{1}{c|}{} & Fval & Iter & Grad & Feas & CPU time & Fval & Iter & Grad & Feas & CPU time  \\ \midrule
\multirow{6}{*}{$(100,5,5) $} & CDFGD &9.00e-11 & 128    & 9.73e-06     & 1.18e-15     & 6.59  &   1.35e-21 & 356    & 1.47e-10     & 1.30e-15     & 18.20  \\
&  CDFCG & 1.21e-12 & 126    & 2.61e-06     & 1.46e-15     & 7.79 & 1.27e-20 & 302    & 6.92e-10     & 1.26e-15     & 18.70  \\
&  CDFLBFGS &  1.07e-11  & 65    & 8.51e-06     & 1.40e-15     & {\bf 2.90}   &  7.09e-24  & 218    & 7.11e-12     & 1.05e-15     & {\bf 9.69}  \\
&  CDFTR & 9.57e-12 & 2    & 2.59e-06     & 9.00e-16     & 5.40 & 1.72e-22 & 4    & 2.28e-11     & 1.11e-15     & 17.47 \\ \cmidrule(l){2-12}
& RGD &   3.21e-11  & 161    & 8.32e-06     & 1.65e-15     & 8.15 & 3.38e-19  & 315    & 5.96e-10     & 1.42e-15     & 16.43 \\
& RCG &  1.95e-11  & 177    & 8.16e-06     & 1.61e-15     & 9.48  &  1.16e-19  & 349    & 9.04e-10     & 1.94e-15     & 19.16  \\ \midrule
\multirow{6}{*}{$(300,5,5) $} & CDFGD & 5.85e-11 & 221    & 7.88e-06     & 1.95e-15     & 121.19  & 2.44e-18 & 442    & 9.96e-10     & 2.30e-15     & 242.76  \\
&  CDFCG &  1.48e-11 & 178    & 3.38e-06     & 2.27e-15     & 119.58 & 1.79e-18 & 371    & 9.70e-10     & 2.26e-15     & 243.45 \\
&  CDFLBFGS &  1.44e-11  & 72    & 9.58e-06     & 2.38e-15     & {\bf 34.75} & 6.36e-23  & 263    & 1.47e-11     & 2.16e-15     & {\bf 119.89} \\
&  CDFTR &  1.16e-11 & 3    & 4.12e-06     & 2.35e-15     & 96.08  &  8.92e-26 & 5    & 2.50e-13     & 2.26e-15     & 259.67 \\ \cmidrule(l){2-12}
& RGD &  1.35e-11  & 227    & 7.33e-06     & 2.37e-15     & 123.21 & 2.68e-18  & 457    & 9.95e-10     & 2.40e-15     & 247.90 \\
& RCG & 1.05e-10  & 244    & 9.99e-06     & 2.29e-15     & 141.20 & 1.10e-18  & 501    & 8.68e-10     & 2.24e-15     & 287.91 \\ \midrule
\multirow{6}{*}{$(500,5,5) $} & CDFGD &  3.11e-09 & 339    & 9.93e-06     & 3.12e-15     & 566.07  & 3.11e-20 & 749    & 4.46e-10     & 3.53e-15     & 1251.37  \\
&  CDFCG &  3.36e-10 & 236    & 3.74e-06     & 2.35e-15     & 441.84 & 3.23e-19 & 655    & 9.63e-10     & 2.41e-15     & 1224.25 \\
&  CDFLBFGS & 4.46e-10  & 120    & 8.53e-06     & 2.80e-15     & {\bf 147.41} &  6.36e-22  & 415    & 1.73e-11     & 2.92e-15     & {\bf 524.06} \\
&  CDFTR & 7.97e-10 & 5    & 3.76e-06     & 2.56e-15     & 503.75  &  6.69e-22 & 7    & 2.90e-11     & 2.54e-15     & 892.65 \\ \cmidrule(l){2-12}
& RGD &    9.17e-10  & 343    & 8.49e-06     & 2.09e-15     & 583.64  &  3.82e-18  & 731    & 8.20e-10     & 3.04e-15     & 1242.91  \\
& RCG & 3.08e-10  & 452    & 9.74e-06     & 2.60e-15     & 818.62  &  -  & -    & -     & -     & $>1800$ \\
 \bottomrule
  \end{tabular}}
\label{Ta6}
\end{table}

\begin{table}[H]
\centering
  \caption{The numerical results for the tensor joint f-diagonalization problem with fixed $N=30$ and $ n = 100 $. The noise level is set at $ \gamma = 0.8 $.}
  \resizebox{\textwidth}{!}
  {\begin{tabular}{cc|ccccc|ccccc}
  	\toprule
\multirow{2}{*}{$(n,p,l)$}& \multirow{2}{*}{Solver}& \multicolumn{5}{|c|}{{\rm tol} = 1e-5}&\multicolumn{5}{c}{{\rm tol} = 1e-9} \\ \cmidrule(lr){3-7} \cmidrule(l){8-12}
  & \multicolumn{1}{c|}{} & Fval & Iter & Grad & Feas & CPU time & Fval & Iter & Grad & Feas & CPU time  \\ \midrule
\multirow{6}{*}{$(100,5,5) $} & CDFGD &  6.14e-11 & 184    & 9.89e-06     & 1.14e-15     & 9.46  & 8.80e-20 & 453    & 7.66e-10     & 1.26e-15     & 23.13 \\
&  CDFCG &  2.10e-12 & 225    & 2.74e-06     & 1.28e-15     & 14.17 & 4.31e-20 & 452    & 5.01e-10     & 1.36e-15     & 28.02  \\
&  CDFLBFGS & 1.18e-11  & 105    & 8.01e-06     & 1.46e-15     & {\bf 4.64} & 1.00e-23  & 297    & 9.84e-12     & 1.29e-15     & {\bf 12.83} \\
&  CDFTR & 1.11e-11 & 3    & 1.37e-06     & 1.20e-15     & 8.52 & 1.77e-23 & 5    & 1.01e-11     & 1.21e-15     & 22.59 \\ \cmidrule(l){2-12}
& RGD & 3.50e-11  & 222    & 7.03e-06     & 2.06e-15     & 11.20 &  2.21e-19  & 513    & 6.46e-10     & 1.75e-15     & 25.98  \\
& RCG &  4.87e-11  & 407    & 9.48e-06     & 2.11e-15     & 21.77 & 4.60e-19  & 723    & 9.90e-10     & 1.77e-15     & 38.13  \\ \midrule
\multirow{6}{*}{$(100,10,5) $} & CDFGD &  1.51e-10 & 306    & 9.71e-06     & 1.72e-15     & 30.18  & 4.95e-21 & 629    & 6.02e-10     & 2.08e-15     & 61.94 \\
&  CDFCG &  4.84e-12 & 266    & 4.05e-06     & 2.04e-15     & 31.44 & 3.33e-21 & 669    & 7.22e-10     & 2.08e-15     & 78.65  \\
&  CDFLBFGS & 1.10e-11  & 146    & 8.04e-06     & 2.09e-15     & {\bf 12.19} & 1.19e-23  & 507    & 1.06e-11     & 1.86e-15     & {\bf 41.96} \\
&  CDFTR & 2.96e-11 & 3    & 2.24e-06     & 1.92e-15     & 27.43 & 4.97e-23 & 5    & 3.07e-11     & 1.92e-15     & 91.58 \\ \cmidrule(l){2-12}
& RGD & 1.04e-10  & 370    & 8.29e-06     & 2.40e-15     & 36.87 &  1.69e-18  & 628    & 9.75e-10     & 2.36e-15     & 62.67  \\
& RCG & 1.04e-10  & 524    & 9.68e-06     & 2.47e-15     & 55.20 & 6.29e-19  & 810    & 8.94e-10     & 3.62e-14     & 85.74 \\ \midrule
\multirow{6}{*}{$(100,30,5) $} & CDFGD &  2.73e-10 & 626    & 7.18e-06     & 3.91e-15     & 206.17  &  4.02e-24 & 2466    & 1.74e-11     & 3.63e-15     & 813.80  \\
&  CDFCG & 9.43e-11 & 625    & 9.41e-06     & 4.11e-15     & 243.67 & 2.71e-21 & 2578    & 8.14e-10     & 3.76e-15     & 1002.93 \\
&  CDFLBFGS &  1.85e-11  & 213    & 8.29e-06     & 3.80e-15     & {\bf 58.35}  &  2.14e-23  & 839    & 1.76e-11     & 3.86e-15     & {\bf 225.95}  \\
&  CDFTR & 6.38e-12 & 3    & 9.51e-06     & 4.00e-15     & 264.16 & 2.85e-20 & 6    & 6.56e-10     & 3.89e-15     & 1093.04  \\ \cmidrule(l){2-12}
& RGD & 4.10e-10  & 698    & 9.83e-06     & 4.93e-15     & 240.68 & 6.30e-18  & 3236    & 9.07e-10     & 5.45e-15     & 1108.84\\
& RCG &  3.49e-10  & 1035    & 8.55e-06     & 4.81e-15     & 381.48 & -  & -    & -     & -     & $>1800$ \\
 \bottomrule
  \end{tabular}}
\label{Ta7}
\end{table}

\section{Conclusion}
Optimization problems with generalized orthogonal constraints comprise a class of manifold optimization problems with special structural properties. Existing approaches have primarily relied on the framework outlined in \cite{absil2008optimization}, in which unconstrained optimization methods are extended to specific manifold structures and corresponding theoretical analyses are developed. However, these methods are typically designed on a case-by-case basis, which limits their extensibility and makes them difficult to apply directly to GOOCP. In addition, operations such as retraction and vector transport are typically involved in these methods, and their computational efficiency is often limited.

This paper shows that the feasible region of \ref{ocp} is a closed embedded submanifold of $\mathbb{R}^{n\times p}$. Consequently, \ref{ocp} can be reformulated as a class of Riemannian optimization problems. To facilitate the development of Riemannian optimization methods, explicit expressions for the tangent space, the Riemannian gradient, and the Riemannian Hessian of the manifold $\mathcal{M}$ are provided. On the other hand, based on the Riemannian constraint dissolving framework, we propose a computationally efficient constraint dissolving operator and subsequently construct the corresponding constraint dissolving function \ref{cdf} for \ref{ocp}. Theoretical properties of the exact penalty function are investigated, including the establishment of a threshold condition for the penalty parameter under which \ref{ocp} and \ref{cdf} share first-order or second-order stationary points within a neighborhood of the manifold, and an analysis is provided for the computational complexity associated with computing the gradient of \ref{cdf}. Through this framework, not only can the desirable properties of various unconstrained optimization methods in Euclidean space be inherited, but the computational burden associated with complex Riemannian geometric tools can also be effectively avoided.

In summary, \ref{cdf} bridges the gap between \ref{ocp} and unconstrained optimization, allowing the problem \ref{ocp} to be solved using various Euclidean versions of unconstrained optimization methods. The need to analyze the manifold structure of \ref{ocp} on a case-by-case basis is thereby eliminated, and a unified, scalable algorithmic framework is established. Furthermore, geometric operations such as retraction and vector transport are avoided, resulting in improved efficiency for solving large-scale problems. The potential advantages of Algorithm \ref{al:1} over traditional Riemannian optimization methods are supported by extensive numerical experiments.

\section*{Acknowledgements}
This work was supported by the National Natural Science Foundation of China (No. 12271113), and Guangxi Natural Science Foundation
(No. 2026GXNSFDA00640024).




\bibliographystyle{elsarticle-num}
  \bibliography{references}

\end{document}